\numberwithin{equation}{section}
\theoremstyle{plain}
\newtheorem{thm}{Theorem}[section]
\newtheorem{prop}[thm]{Proposition}
\newtheorem{lem}[thm]{Lemma}
\newtheorem{fact}[thm]{Fact}
\theoremstyle{definition}
\newtheorem{de}[thm]{Definition}
\newtheorem{rem}[thm]{Remark}
\newcommand{\bb}{\mathbb}
\newcommand{\cal}{\mathcal}
\newcommand{\ovl}{\overline}
\newcommand{\wtl}{\widetilde}
\DeclareMathOperator*{\Hom}{Hom}
\DeclareMathOperator*{\pr}{pr}
\DeclareMathOperator*{\Ass}{Ass}
\DeclareMathOperator{\Ad}{Ad}
\DeclareMathOperator{\ad}{ad}
\DeclareMathOperator{\Int}{Int}
\title[Restriction of Zuckerman's derived functor modules]
 {On the restriction of Zuckerman's derived functor modules
 $A_{\mathfrak{q}}(\lambda)$ to reductive subgroups}
\thanks{%
2000 MSC: Primary 22E46; Secondary 14F10, 32C38.%
}
\author{Yoshiki Oshima}
\address[Yoshiki Oshima]{Graduate School of Mathematical Sciences, 
        The University of Tokyo, 3-8-1 Komaba, Meguro, 153-8914 Tokyo, Japan}
\email{yoshiki@ms.u-tokyo.ac.jp}
\keywords{unitary representation, Zuckerman's derived functor module, %
branching law, reductive group, $D$-module, flag variety}
\begin{document}

\maketitle

\begin{abstract}
In this article, 
 we study the restriction of Zuckerman's derived functor
 $(\mathfrak{g},K)$-modules $A_{\mathfrak{q}}(\lambda)$
 to $\frak{g}'$
 for symmetric pairs of reductive Lie algebras $(\mathfrak{g}, \mathfrak{g}')$.
When the restriction decomposes into irreducible
 $(\mathfrak{g}',K')$-modules, we give an upper bound for the branching law.
In particular, we prove that each $(\frak{g}',K')$-module occurring in the
 restriction is isomorphic to a submodule of $A_{\mathfrak{q}'}(\lambda')$
 for a parabolic subalgebra
 $\mathfrak{q}'$ of $\mathfrak{g}'$, and
 determine their associated varieties.
For the proof, we construct $A_{\mathfrak{q}}(\lambda)$
 on complex partial flag varieties by using ${\cal D}$-modules.
\end{abstract}

\section{Introduction}
Our object of study is branching laws of
 Zuckerman's derived functor modules $A_\frak{q}(\lambda)$
 with respect to symmetric pairs of real reductive Lie groups.

Let $G_0$ be a real reductive Lie group with Lie algebra $\frak{g}_0$.
Fix a Cartan involution $\theta$ of $G_0$ so that
 the fixed set $K_0:=(G_0)^\theta$ is a maximal compact subgroup
 of $G_0$.  
Write $K$ for the complexification of $K_0$, 
 $\frak{g}_0=\frak{k}_0\oplus\frak{p}_0$ for
 the Cartan decomposition with respect to $\theta$
 and $\frak{g}:=\frak{g}_0\otimes_{\bb{R}} \bb{C}$
 for the complexification.
Similar notation will be used for other Lie algebras.
The cohomologically induced module $A_\frak{q}(\lambda)$ is
 a $(\frak{g},K)$-module
 defined for a $\theta$-stable parabolic subalgebra $\frak{q}$
 of $\frak{g}$ and a character $\lambda$.
The $(\frak{g},K)$-module $A_\frak{q}(\lambda)$ is unitarizable
 under a certain condition on the parameter $\lambda$
 and therefore plays a large part in the study
 of the unitary dual of real reductive Lie groups.

One of the fundamental problems in the representation theory
 is to decompose a given representation into irreducible constituents.
To begin with, we consider the restriction of $(\frak{g},K)$-modules
 to $K$, or equivalently, to the compact group $K_0$.
In this case, any irreducible $(\frak{g},K)$-module
 decomposes as the direct sum of irreducible representations of $K$
 and each $K$-type occurs with finite multiplicity.
For $A_\frak{q}(\lambda)$, 
 the following formula gives an upper bound for the multiplicities.

\begin{fact}[{\cite[\S V.4]{KnVo}}]
\label{upblattner}
Let $\frak{u}$ be the nilradical of $\frak{q}$.
Take a Cartan subalgebra $\frak{t}_0$ of $\frak{k}_0$
 such that $\frak{t}\subset \frak{q}\cap \frak{k}$
 and choose a positive system $\Delta^+(\frak{k},\frak{t})$
 contained in $\Delta(\frak{q}\cap \frak{k},\frak{t})$.
For a dominant integral weight $\mu\in\frak{t}^*$
 write $F(\mu)$ for the irreducible finite-dimensional representation
 of $K$ with highest weight $\mu$.
Then
\begin{align}
\label{introinjk}
A_\frak{q}(\lambda)|_{K}
\leq 
\bigoplus_{p=0}^{\infty}
\bigoplus_{\mu}
F(\mu)^{\oplus m(\mu,\,p)},
\end{align}
where $m(\mu,p)$ is the multiplicity
 of weight $\mu$ in
 $\bb{C}_{\lambda+2\rho(\frak{u}\cap\frak{p})}\otimes
 S^p(\frak{u}\cap \frak{p})$.
\end{fact}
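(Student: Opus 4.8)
The plan is to reduce the $K$-type estimate for $A_{\mathfrak{q}}(\lambda)$ to the Borel--Weil--Bott theorem for the \emph{compact} group $K$. Write $\mathfrak{l}$ for a $\theta$-stable Levi factor of $\mathfrak{q}$, put $S=\dim(\mathfrak{u}\cap\mathfrak{k})$, and realize $A_{\mathfrak{q}}(\lambda)$ by the Bernstein functor, i.e.\ as $(\mathcal{L}^{\mathfrak{g},K}_{\mathfrak{q},L\cap K})_{S}(\mathbb{C}_{\lambda+2\rho(\mathfrak{u})})$ in the usual normalization; the point of using the $\mathrm{ind}$-side functor is that it commutes with direct limits. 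The first step is the standard reduction of cohomological induction to the $\mathfrak{k}$-level: since $\mathcal{L}^{\mathfrak{g},K}_{\mathfrak{q},L\cap K}$ alters only the way a module sits over $L\cap K$ relative to $K$, and not the way $\mathfrak{g}$ sits over $\mathfrak{k}$, restriction to the $(\mathfrak{k},K)$-structure gives
\begin{align}
\label{kreduction}
A_{\mathfrak{q}}(\lambda)|_{(\mathfrak{k},K)}
\;\cong\;
\bigl(\mathcal{L}^{\mathfrak{k},K}_{\mathfrak{q}\cap\mathfrak{k},\,L\cap K}\bigr)_{S}(N),
\end{align}
where $N:=U(\mathfrak{g})\otimes_{U(\mathfrak{q})}\mathbb{C}_{\lambda+2\rho(\mathfrak{u})}$ is now viewed as a $(\mathfrak{q}\cap\mathfrak{k},L\cap K)$-module (see \cite{KnVo}). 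Here $S$ is the top nonvanishing degree of $\mathcal{L}^{\mathfrak{k},K}_{\mathfrak{q}\cap\mathfrak{k},L\cap K}$ (its homological dimension, $\dim(\mathfrak{k}/(\mathfrak{q}\cap\mathfrak{k}))$), so the right-hand side is a \emph{top}-degree functor.

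Next I would filter $N$. Using the PBW filtration, refined by the $\theta$-stable splitting $\mathfrak{g}/\mathfrak{q}\cong(\ovl{\mathfrak{u}}\cap\mathfrak{k})\oplus(\ovl{\mathfrak{u}}\cap\mathfrak{p})$ together with the weight grading on the nilradical, $N$ acquires an exhaustive increasing filtration $0=N_{-1}\subset N_{0}\subset N_{1}\subset\cdots$ by finite-dimensional $(\mathfrak{q}\cap\mathfrak{k},L\cap K)$-submodules on whose successive quotients $E_{p}:=N_{p}/N_{p-1}$ the nilradical $\mathfrak{u}\cap\mathfrak{k}$ acts trivially and which, as $L\cap K$-modules, are $\mathbb{C}_{\lambda+2\rho(\mathfrak{u}\cap\mathfrak{p})}\otimes S^{p}(\mathfrak{u}\cap\mathfrak{p})$ (the $\ovl{\mathfrak{u}}\cap\mathfrak{k}$-directions are absorbed into the $\mathfrak{k}$-level induction, which after the $\rho$-bookkeeping accounts for the absence of a $2\rho(\mathfrak{u}\cap\mathfrak{k})$-shift). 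Since $(\mathcal{L}^{\mathfrak{k},K}_{\mathfrak{q}\cap\mathfrak{k},L\cap K})_{j}=0$ for $j>S$ and the functor commutes with direct limits, chaining the long exact sequences attached to $0\to N_{p-1}\to N_{p}\to E_{p}\to0$ in the top degree $S$, where $(\mathcal{L}^{\mathfrak{k},K}_{\mathfrak{q}\cap\mathfrak{k},L\cap K})_{S+1}(E_{p})=0$, gives for every $K$-type $F(\mu)$
\begin{align}
\label{klevelbound}
\bigl[\,A_{\mathfrak{q}}(\lambda)|_{K}:F(\mu)\,\bigr]
\;\le\;
\sum_{p=0}^{\infty}\,
\bigl[\,(\mathcal{L}^{\mathfrak{k},K}_{\mathfrak{q}\cap\mathfrak{k},L\cap K})_{S}(E_{p}):F(\mu)\,\bigr].
\end{align}

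Finally, each $E_{p}$ is a finite-dimensional $L\cap K$-module, inflated to $\mathfrak{q}\cap\mathfrak{k}$ with trivial nilradical action, and $(\mathcal{L}^{\mathfrak{k},K}_{\mathfrak{q}\cap\mathfrak{k},L\cap K})_{S}(E_{p})$ is computed by the Borel--Weil--Bott theorem for the compact pair $(K,L\cap K)$ (equivalently by Kostant's theorem on $\mathfrak{u}\cap\mathfrak{k}$-cohomology): it is a sum of irreducibles $F(\mu)$ with $\mu$ obtained from the weights of $E_{p}$ by the Weyl group of $\mathfrak{k}$, concentrated in the degree given by the length of the Weyl element. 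Carrying this out --- and using that $\Delta^{+}(\mathfrak{k},\mathfrak{t})$ was chosen inside $\Delta(\mathfrak{q}\cap\mathfrak{k},\mathfrak{t})$, which pins down the positive systems and hence the cohomological degrees --- one finds that $F(\mu)$ occurs in the top-degree term $(\mathcal{L}^{\mathfrak{k},K}_{\mathfrak{q}\cap\mathfrak{k},L\cap K})_{S}(E_{p})$ with multiplicity at most $m(\mu,p)$, the number of times $\mu$ occurs as a $\mathfrak{t}$-weight of $E_{p}$. Substituting into \eqref{klevelbound} yields \eqref{introinjk}.

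The conceptual crux is the first step: reducing to the $\mathfrak{k}$-level concentrates all of the infinitely many $K$-types into the single $S(\mathfrak{u}\cap\mathfrak{p})$-direction and replaces an intractable $\mathfrak{g}$-module by a direct limit of finite-dimensional $\mathfrak{k}$-level computations. No positivity or regularity hypothesis on $\lambda$ is needed, since the long exact sequences are only used in the top degree $S$, where the connecting map lands in $(\mathcal{L}^{\mathfrak{k},K}_{\mathfrak{q}\cap\mathfrak{k},L\cap K})_{S+1}=0$ and no cancellation can occur. The main technical work is then the last step: the Borel--Weil--Bott bookkeeping of cohomological degrees and highest weights that produces exactly the bound $m(\mu,p)$ --- so that the weight multiplicities of $E_{p}$, rather than something larger, appear in \eqref{introinjk}.
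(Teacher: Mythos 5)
The paper cites this Fact from \cite[\S V.4]{KnVo} without giving its own proof, so there is no in-paper argument to compare against; your proposal follows the standard three-step outline behind that reference, which is the right approach: pass to the $K$-level, filter the generalized Verma module by normal degree, and apply Kostant/Borel--Weil--Bott to each graded piece in the top degree $S=\dim(\mathfrak{u}\cap\mathfrak{k})$. Your observation that only vanishing above degree $S$ is used (hence no positivity on $\lambda$ is required) is also correct.

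The first reduction step, however, is not right as written. The generalized Verma module $N=U(\mathfrak{g})\otimes_{U(\overline{\mathfrak{q}})}\mathbb{C}_{\lambda+2\rho(\mathfrak{u})}$ (note: over $\overline{\mathfrak{q}}$ in the paper's normalization, not $\mathfrak{q}$) already contains the $\overline{\mathfrak{u}}\cap\mathfrak{k}$-directions in its $\mathfrak{k}$-structure, so forgetting it down to a $(\mathfrak{q}\cap\mathfrak{k},L\cap K)$-module and then applying full cohomological induction from $\mathfrak{q}\cap\mathfrak{k}$ would re-induce along $\overline{\mathfrak{u}}\cap\mathfrak{k}$ and double-count those directions. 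The correct $K$-level identity --- \cite[Proposition~2.115]{KnVo}, which the paper itself invokes in the proof of Theorem~\ref{locZuc} --- is $A_\mathfrak{q}(\lambda)|_K \simeq (P^{\mathfrak{k},K}_{\mathfrak{k},L\cap K})_S(N|_{(\mathfrak{k},L\cap K)})$, i.e.\ the derived Bernstein functor from $L\cap K$ to $K$ applied to $N$ with its \emph{full} $\mathfrak{k}$-action, not re-induction from $\mathfrak{q}\cap\mathfrak{k}$. Correspondingly, the graded pieces of the normal-degree filtration of $N$ as a $(\mathfrak{k},L\cap K)$-module are not the finite-dimensional $\mathbb{C}_{\lambda+2\rho(\mathfrak{u}\cap\mathfrak{p})}\otimes S^p(\mathfrak{u}\cap\mathfrak{p})$ you describe, but the infinite-dimensional $U(\mathfrak{k})\otimes_{U(\overline{\mathfrak{q}}\cap\mathfrak{k})}\bigl(S^p(\mathfrak{u}\cap\mathfrak{p})\otimes\mathbb{C}_{\lambda+2\rho(\mathfrak{u})}\bigr)$; the Mackey isomorphism then converts $(P^{\mathfrak{k},K}_{\mathfrak{k},L\cap K})_S$ of each such piece into $(P^{\mathfrak{k},K}_{\overline{\mathfrak{q}}\cap\mathfrak{k},L\cap K})_S\bigl(S^p(\mathfrak{u}\cap\mathfrak{p})\otimes\mathbb{C}_{\lambda+2\rho(\mathfrak{u})}\bigr)$, and it is only at this stage that Kostant's theorem applies, with the top Tor supplying the shift by $-2\rho(\mathfrak{u}\cap\mathfrak{k})$ that produces the stated bound $m(\mu,p)$. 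With this correction, the long exact sequences in top degree and the weight-multiplicity bookkeeping proceed as you indicate.
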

There is also an explicit branching formula
 of $A_{\frak{q}}(\lambda)|_{K}$ for weakly fair $\lambda$,
 known
 as the generalized Blattner formula
 (see \cite[\S II.7]{Bien}, \cite[\S V.5]{KnVo}).

On the other hand, the restriction to a non-compact subgroup
 is more complicated.
Let $\sigma$ be an involution of $G_0$ that commutes with
 $\theta$ and let
 $G'_0$ be the identity component of $(G_0)^\sigma$.
The pair $(G_0,G'_0)$ is called a symmetric pair.
Write $\frak{g}'$ for the complexified Lie algebra
 of $G'_0$ and write $K'$ for the complexification of
 the maximal compact group $K'_0:=(G'_0)^\theta$ of $G'_0$.
If $G'_0$ is non-compact,
 the restriction $A_\frak{q}(\lambda)|_{(\frak{g}',K')}$
 does not decompose into irreducible
 $(\frak{g}',K')$-modules in general.
Indeed, $A_\frak{q}(\lambda)|_{(\frak{g}',K')}$
 does not have any irreducible submodule in many cases.

Nevertheless, there are classes of
 $(\frak{g},K)$-modules which decompose
 into irreducible $(\frak{g}',K')$-modules and
 explicit branching formulas were obtained
 for some particular representations
 \cite{DuVa}, \cite{GW}, \cite{kob93}, \cite{kob94},
 \cite{KoOr}, \cite{Lo}, \cite{OrSp}, \cite{Se}.
In his series of papers \cite{kob93}, 
 \cite{kob94}, \cite{kob98i}, \cite{kob98ii},
 Kobayashi introduced the notion of
 discretely decomposable $(\frak{g}',K')$-modules
 and gave criteria for the discretely decomposable restrictions
 (see Fact~\ref{discdecomp}).
By virtue of this result,
 we can single out $A_\frak{q}(\lambda)$ 
 that decompose into
 irreducible $(\frak{g}',K')$-modules. 
See \cite{KoOs} for a classification
 of the discretely decomposable
 restrictions $A_\frak{q}(\lambda)|_{(\frak{g}',K')}$.
Recent developments on these subjects
 are discussed in \cite{kob11}.

Our aim is to find a branching law of
 $A_\frak{q}(\lambda)|_{(\frak{g}',K')}$
 when it is discretely decomposable.
The main result of this article is Theorem~\ref{branchup2},
 where we construct an injective $(\frak{g}',K')$-homomorphism:
\begin{align}
\label{introinj}
A_\frak{q}(\lambda)\to
\bigoplus_{p=0}^{\infty}
\bigoplus_{\lambda'}
A_{\frak{q}''}(\lambda')^{\oplus m(\lambda'\!,\,p)}.
\end{align}
The parabolic subalgebra $\frak{q}''$ of $\frak{g}'$ and 
 the multiplicity function $m(\lambda', p)$ are given
 in \eqref{defq''} and \eqref{defmult}, respectively.
Theorem~\ref{branchup2} is a generalization of 
 Fact~\ref{upblattner} because
 if $\theta=\sigma$, then $G'_0=K_0$ and
 it turns out that
 the right side of \eqref{introinj} is isomorphic to
 the right side of \eqref{introinjk} as a $K$-module.

For the proof of the theorem, we realize $A_\frak{q}(\lambda)$
 as the global sections of sheaves on complex partial flag varieties
 in Theorem~\ref{locZuc},
 using ${\cal D}$-modules.
A relation
 between cohomologically induced modules
 and twisted ${\cal D}$-modules on the complete flag variety
 was constructed by
 Hecht--Mili$\rm{\check{c}}$i$\rm{\acute{c}}$--Schmid--Wolf~\cite{dual}.
See \cite{Bien},
\cite{Kit}, \cite{MiPa} for further developments of this result. 
Our proof of Theorem~\ref{locZuc} is based on \cite{dual}. 

As a corollary to Theorem~\ref{branchup2}, 
 we determine the associated varieties of
 the irreducible constituents of
 $A_\frak{q}(\lambda)|_{\frak{g}'}$
 in Theorem~\ref{branchass}.
Let $W$ be an irreducible $(\frak{g},K)$-module and
 $V$ an irreducible $(\frak{g}',K')$-module such that
 $\Hom_{\frak{g}'}(V,W)\neq 0$.
Write ${\pr}_{\frak{g}\to\frak{g}'}:\frak{g}^*\to(\frak{g}')^*$
 for the restriction map 
 and write ${\Ass}_{\frak{g}'}(V), {\Ass}_{\frak{g}}(W)$
 for the associated varieties of $V, W$, respectively.
Then the inclusion
 ${\pr}_{\frak{g}\to\frak{g}'}({\Ass}_{\frak{g}}(W))\subset
 {\Ass}_{\frak{g}'}(V)$ was proved in \cite{kob98ii}
 and the equality 
\begin{align}
\label{asseq}
{\pr}_{\frak{g}\to\frak{g}'}({\Ass}_{\frak{g}}(W))=
{\Ass}_{\frak{g}'}(V)
\end{align}
was conjectured in \cite{kob11}.
Using Theorem~\ref{branchup2}, we show that the equality \eqref{asseq} holds
 for $W=A_\frak{q}(\lambda)$.

This article is organized as follows.
In Section \ref{sec:coh}, we recall the definitions of cohomological induction
 and $A_\frak{q}(\lambda)$, following the book by Knapp--Vogan~\cite{KnVo}.
In this article, we extend actions of a compact group $K_0$
 to actions of its complexification $K$, and
 view $(\frak{g},K_0)$-modules as $(\frak{g},K)$-modules.
In Section~\ref{sec:homog}, we fix notation 
 and prove lemmas concerning
 homogeneous spaces and differential operators.
Lemma~\ref{affloc} is used in the proof of Theorem~\ref{locZuc}.
Section~\ref{sec:loc} is devoted to the proof of
 Theorem~\ref{locZuc}.
In Section~\ref{sec:constparab}, we construct
 $\theta$-stable parabolic subalgebras of $\frak{g}'$
 that will appear in the branching laws, using a
 criterion for the discrete decomposability given in \cite{kob98ii}.
The parabolic subalgebra $\frak{q}''$ is defined in \eqref{defq''}. 
We prove Theorem~\ref{branchup2} in
 Section~\ref{sec:branch}. 
We study the associated varieties in Section~\ref{sec:ass}.

{\bf Acknowledgements.}
I am deeply grateful to my advisor
 Professor Toshiyuki Kobayashi
 for his helpful comments and warm encouragement.
I am supported by the Research Fellowship of the
 Japan Society for the Promotion of Science for Young Scientists.

%%%%%%%%%%%%%%%%%%%%%%%%%%%%%%%%%%%%%%%%%%%%%%%%%%%%%
%%%%%%%%%%%%%%%%%%%%%%%%%%%%%%%%%%%%%%%%%%%%%%%%%%%%%
%%%%%%%%%%%%%%%%%%%%%%%%%%%%%%%%%%%%%%%%%%%%%%%%%%%%%

\section{Cohomological Induction}\label{sec:coh}
In this section, we fix notation concerning cohomological induction
 and $A_\frak{q}(\lambda)$, following \cite{KnVo}.

Let $K_0$ be a compact Lie group.
The complexification $K$ of $K_0$ has the structure of
 reductive linear algebraic group.
Since any locally finite action of $K_0$
 is uniquely extended to an algebraic
 action of $K$, the locally finite $K_0$-modules are identified with
 the algebraic $K$-modules.

Define the Hecke algebra $R(K_0)$ as the space of
 $K_0$-finite distributions on $K_0$. 
For $S\in R(K_0)$, the pairing with a smooth function $f\in C(K_0)$
 on $K_0$ is written as  
\[
\int_{K_0} f(k) dS(k).
\]
The product of $S, T\in R(K_0)$ is given by 
\[
S*T: f\mapsto \int_{K_0\times K_0} f(kk') dS(k)dT(k').
\]
The associative algebra $R(K_0)$
 does not have the identity, but has an approximate identity
 (see \cite[Chapter I]{KnVo}).
The locally finite $K_0$-modules are identified with the 
 approximately unital left $R(K_0)$-modules. 
The action map $R(K_0)\times V\to V$ is given by
\[
(S, v)\mapsto \int_{K_0} kv \,dS(k)
\]
for a locally finite $K_0$-module $V$. 
Here, $kv$ is regarded as a smooth function on $K_0$ that takes values on $V$.
If $dk_0$ denotes the Haar measure of $K_0$,
 then $R(K_0)$ is identified with the $K$-finite smooth functions
 $C(K_0)_{K_0}$ by $f dk_0\mapsto f$ and hence with
 the regular functions ${\cal O}(K)$ on $K$.
As a $\bb{C}$-algebra, we have a canonical isomorphism 
 \[R(K_0)\simeq \bigoplus_{\tau\in \widehat{K}} \operatorname{End}_\bb{C}
 (V_\tau),\] 
where $\widehat{K}$ is the set of equivalence classes of
 irreducible $K$-modules, and $V_\tau$ is a representation space
 of $\tau\in {\widehat{K}}$.
Hence $R(K_0)$ depends only on the complexification $K$, 
 so in what follows, we also denote $R(K_0)$ by $R(K)$.

The Hecke algebra $R(K)$ is generalized to $R(\frak{g}, K)$
 for the following pairs $(\frak{g},K)$.

\begin{de}
\label{de:pair}
Let $\frak{g}$ be a finite-dimensional complex Lie algebra
 and let $K$ be a complex reductive linear algebraic group with Lie algebra 
 $\frak{k}$.
Suppose that $\frak{k}$ is a Lie subalgebra of $\frak{g}$ and
 that an algebraic group homomorphism
 $\phi:K\to \operatorname{Aut}(\frak{g})$ is given.
We say that $(\frak{g},K)$ is a {\it pair}
 if the following two assumptions hold.
\begin{itemize}
\item
The restriction $\phi(k)|_{\frak{k}}$ is equal to
 the adjoint action $\Ad(k)$ for $k\in K$.
\item
The differential of $\phi$ is equal to
 the adjoint action $\ad_\frak{g}(\frak{k})$.
\end{itemize}
\end{de}

\begin{rem}
Let $G$ be a complex algebraic group and $K$ a reductive linear 
 algebraic subgroup.
Then the Lie algebra $\frak{g}$ of $G$ and $K$ form a pair 
 with respect to the adjoint action $\phi(k):={\Ad}(k)$ for $k\in K$.
All the pairs we will consider in this article are given in this way.
\end{rem}

\begin{de}
{\rm 
For a pair $(\frak{g},K)$,
 let $V$ be a complex vector space with a Lie algebra action of $\frak{g}$ and
 an algebraic action of $K$.
We say that $V$ is a {\it $(\frak{g},K)$-module} if 
\begin{itemize}
\item
the differential of the action of $K$ coincides with
 the restriction of the action of $\frak{g}$ to $\frak{k}$; and
\item
$(\phi(k)\xi)v=k(\xi (k^{-1}(v)))$ for $k\in K$,
 $\xi\in\frak{g}$, and $v\in V$.
\end{itemize}
}
\end{de}

We write ${\cal C}(\frak{g},K)$ for the category of
 $(\frak{g},K)$-modules.

Let $(\frak{g},K)$ be a pair in the sense of
 Definition~\ref{de:pair}.
We extend the representation $\phi: K\to \operatorname{Aut}(\frak{g})$ to
 a representation on the universal enveloping algebra
 $\phi: K\to \operatorname{Aut}(U(\frak{g}))$.
Define the Hecke algebra $R(\frak{g},K)$ as
 \[R(\frak{g},K):= R(K)\otimes_{U(\frak{k})} U(\frak{g}).\]
The product is given by
\[(S\otimes \xi)\cdot(T\otimes \eta)=
\sum_i (S * (\langle\xi_i^*, \phi(\cdot)^{-1}\xi\rangle T)\otimes \xi_i \eta)
\]
for $S, T\in R(K)$ and $\xi,\eta\in U(\frak{g})$.
Here $\xi_i$ is a basis of the linear span of $\phi(K)\xi$ and 
 $\xi^i$ is its dual basis.
As in the group case, 
 the $(\frak{g},K)$-modules are identified with the 
 approximately unital left $R(\frak{g}, K)$-modules. 
The action map $R(\frak{g},K)\times V\to V$ is
 given by
\[
(S\otimes \xi,\, v)\mapsto \int_{K_0} k(\xi v)\,dS(k)
\]
for a $(\frak{g}, K)$-module $V$.

Let $(\frak{g},K)$ and $(\frak{h},M)$ be pairs in the sense of
 Definition~\ref{de:pair}.
Let $i:(\frak{h},M)\to (\frak{g},K)$ be a map between pairs, namely,
 a Lie algebra homomorphism $i_{\rm alg}:\frak{h}\to \frak{g}$ and
 an algebraic group homomorphism $i_{\rm gp}:M\to K$ satisfy the following
 two assumptions.
\begin{itemize}
\item 
The restriction of $i_{\rm alg}$ to the Lie algebra $\frak{m}$ 
 of $M$ is equal to the differential of $i_{\rm gp}$.
\item 
$\phi_K(m)\circ i_{\rm alg}=i_{\rm alg}\circ\phi_M(m)$ for $m\in M$,
 where $\phi_K$ denotes $\phi$ for $(\frak{g},K)$ in
 Definition~\ref{de:pair} and $\phi_M$ denotes $\phi$ for $(\frak{h},M)$.
\end{itemize}
We define covariant functors
 $P_{\frak{h},M}^{\frak{g},K}
 :{\cal C}(\frak{h},M)\to {\cal C}(\frak{g},K)$
 and 
 $I_{\frak{h},M}^{\frak{g},K}
 :{\cal C}(\frak{h},M)\to {\cal C}(\frak{g},K)$
 as
\begin{align*}
&P_{\frak{h},M}^{\frak{g},K}
:V\mapsto R(\frak{g},K)\otimes_{R(\frak{h},M)} V, \\
&I_{\frak{h},M}^{\frak{g},K}
:V\mapsto ({\Hom}_{R(\frak{h},M)} (R(\frak{g},K),\,V))_{K},
\end{align*}
where $(\cdot)_{K}$ is the subspace of $K$-finite vectors.
Then $P_{\frak{h},M}^{\frak{g},K}$ is right exact
 and $I_{\frak{h},M}^{\frak{g},K}$ is left exact.
Write $(P_{\frak{h},M}^{\frak{g},K})_j$ for the $j$-th
 left derived functor of $P_{\frak{h},M}^{\frak{g},K}$ and
 write $(I_{\frak{h},M}^{\frak{g},K})^j$ for the $j$-th
 right derived functor of $I_{\frak{h},M}^{\frak{g},K}$.

In the context of unitary representations of
 real reductive Lie groups, we are especially interested in
 the $(\frak{g},K)$-modules cohomologically induced from
 one-dimensional representations of a certain type of
 parabolic subalgebras, which are called $A_\frak{q}(\lambda)$.

Let $G_0$ be a connected real linear reductive Lie group
 with Lie algebra $\frak{g}_0$.
This means that $G_0$ is a connected closed subgroup of $GL(n,\bb{R})$
 and stable under transpose.
We fix such an embedding and write $G$ for the connected 
 algebraic subgroup of $GL(n,\bb{C})$ with Lie algebra
 $\frak{g}=\frak{g}_0\oplus\sqrt{-1}\frak{g}_0$.
In what follows, we embed reductive subgroups
 of $G_0$ in $GL(n,\bb{C})$ and 
 define their complexifications similarly.

Fix a Cartan involution $\theta$
 so the $\theta$-fixed point set
 $K_0=G_0^{\theta}$ is a maximal compact
 subgroup of $G_0$.
Let $\frak{g}_0=\frak{k}_0+\frak{p}_0$ be the corresponding
 Cartan decomposition.
We let $\theta$ also denote the induced involution on $\frak{g}_0$
 and its complex linear extension to $\frak{g}$.

Let $\frak{q}$ be a parabolic subalgebra of $\frak{g}$
 that is stable under $\theta$.
The normalizer $N_{G_0}(\frak{q})$ of $\frak{q}$ in $G_0$
 is denoted by $L_0$.
The complexified Lie algebra $\frak{l}$ of $L_0$
 is a Levi part of $\frak{q}$.
Let bar $x \mapsto \Bar{x}$ denote the complex conjugate
 with respect to the real form $\frak{g}_0$.
Then we have $\frak{q}\cap\Bar{\frak{q}}=\frak{l}$ and
  $\frak{q}=\frak{l}+\frak{u}$
 for the nilradical $\frak{u}$ of $\frak{q}$.

Because $L\cap K$ is connected,
 one-dimensional $(\frak{l}, L\cap K)$-modules are
 determined by the action of the center $\frak{z}(\frak{l})$ of $\frak{l}$.
Let $\bb{C}_\lambda$ denote the one-dimensional
 $(\frak{l}, L\cap K)$-module
 corresponding to
 $\lambda\in \frak{z}(\frak{l})^*:=
 {\Hom}_\bb{C}(\frak{z}(\frak{l}),\bb{C})$.
With our normalization,
 the trivial representation corresponds to $\bb{C}_0$.
The top exterior product
 $\bigwedge^{\rm top}(\frak{g}/\Bar{\frak{q}})$ regarded as
 an $(\frak{l},L\cap K)$-module by the adjoint action
 corresponds to 
 $\bb{C}_{2\rho(\frak{u})}$
 for $2\rho(\frak{u}):={\rm Trace}\ad_{\frak{u}}(\cdot)$.

\begin{de}
{\rm 
Let $\bb{C}_\lambda$ be a one-dimensional
 $(\frak{l},L\cap K)$-module.

We say $\lambda$ is {\it unitary} if
 $\lambda$ takes pure imaginary
 values on the center $\frak{z}(\frak{l}_0)$ of $\frak{l}_0$,
 or equivalently, 
 if $\bb{C}_\lambda$ is the underlying $(\frak{l},L\cap K)$-module
 of a unitary character of $L_0$.

We say $\lambda$ is {\it linear} if $\bb{C}_\lambda$ lifts
 to an algebraic representation of the complexification $L$ of $L_0$.
}
\end{de}

\begin{rem}
\label{pzero}
{\rm 
If $\lambda$ is linear, then
 $\lambda$ takes real values on $\frak{z}(\frak{l}_0)\cap \frak{p}_0$.
In particular, if $\lambda$ is linear and unitary,
 then $\lambda$ is zero on $\frak{z}(\frak{l})\cap \frak{p}$.
}
\end{rem}

Let $\bb{C}_\lambda$ be
 a one-dimensional $(\frak{l},L\cap K)$-module.
We see 
 $\bb{C}_{\lambda+2\rho(\frak{u})}\simeq
 \bb{C}_\lambda\otimes \bb{C}_{2\rho(\frak{u})}$
 as a $(\Bar{\frak{q}},L\cap K)$-module
 (resp.\ a $(\frak{q},L\cap K)$-module)
 by letting $\Bar{\frak{u}}$ (resp.\ $\frak{u}$) acts as zero.
Then, for inclusion maps of pairs
 $(\Bar{\frak{q}},L\cap K)\to (\frak{g},K)$ and
 $(\frak{q},L\cap K)\to (\frak{g},K)$, define
 the cohomologically induced modules
 $(P_{\Bar{\frak{q}},L\cap K}^{\frak{g},K})_j
 (\bb{C}_{\lambda+2\rho(\frak{u})})$
 and
 $(I_{\frak{q},L\cap K}^{\frak{g},K})^j
 (\bb{C}_{\lambda+2\rho(\frak{u})})$.

The functor
 $P_{\frak{g},L\cap K}^{\frak{g},K}$ is called
 the Bernstein functor and denoted by $\Pi_{L\cap K}^{K}$.
Since
$P_{\Bar{\frak{q}},L\cap K}^{\frak{g},K}
 \simeq \Pi_{L\cap K}^{K}
 \circ P_{\Bar{\frak{q}},L\cap K}^{\frak{g},L\cap K}$
 and $P_{\Bar{\frak{q}},L\cap K}^{\frak{g},L\cap K}$
 is exact,
 it follows that $(P_{\Bar{\frak{q}},L\cap K}^{\frak{g},K})_j
 \simeq (\Pi_{L\cap K}^{K})_j
 \circ P_{\Bar{\frak{q}},L\cap K}^{\frak{g},L\cap K}$
 for the $j$-th left derived functor $(\Pi_{L\cap K}^K)_j$
 of $\Pi_{L\cap K}^K$.
Therefore, we have
\[
(P_{\Bar{\frak{q}},L\cap K}^{\frak{g},K})_j
 (\bb{C}_{\lambda+2\rho(\frak{u})})
\simeq (\Pi_{L\cap K}^{K})_j
 (U(\frak{g})\otimes_{U(\Bar{\frak{q}})}
 \bb{C}_{\lambda+2\rho(\frak{u})}).
\]
Similarly, 
 $\Gamma_{L\cap K}^{K}:=I_{\frak{g},L\cap K}^{\frak{g},K}$
 is called
 the Zuckerman functor and we have
\[
(I_{\frak{q},L\cap K}^{\frak{g},K})^j
 (\bb{C}_{\lambda+2\rho(\frak{u})})
\simeq (\Gamma_{L\cap K}^{K})^j
 ({\Hom}_{U(\frak{q})}
 (U(\frak{g}), \bb{C}_{\lambda+2\rho(\frak{u})})_{L\cap K})
\]
for the $j$-th right derived functor $(\Gamma_{L\cap K}^K)^j$
 of $\Gamma_{L\cap K}^K$.
Put $s=\dim (\frak{u}\cap \frak{k})$.
We define
\[
A_\frak{q}(\lambda):=
(P_{\Bar{\frak{q}},L\cap K}^{\frak{g},K})_s
(\bb{C}_{\lambda+2\rho(\frak{u})})
\simeq (\Pi_{L\cap K}^{K})_s 
(U(\frak{g})\otimes_{U(\Bar{\frak{q}})}
 \bb{C}_{\lambda+2\rho(\frak{u})}).
\]

We now discuss the positivity of the parameter $\lambda$.
Let $\frak{h}_0$ be a fundamental Cartan subalgebra of $\frak{l}_0$.
Choose a positive system $\Delta^+(\frak{g},\frak{h})$
 of the root system $\Delta(\frak{g},\frak{h})$
 such that $\Delta^+(\frak{g},\frak{h})
 \subset\Delta(\frak{q},\frak{h})$ and put 
\[\frak{n}=\bigoplus_{\alpha\in\Delta^+(\frak{g},\frak{h})}
\frak{g}_\alpha.\]
We fix a non-degenerate invariant form $\langle \cdot,\cdot\rangle$
 that is positive definite on the real span of the roots.
In the following definition,
 we extend characters of $\frak{z}(\frak{l})$ to
 $\frak{h}$ by zero on $[\frak{l},\frak{l}]\cap \frak{h}$.

\begin{de}
{\rm 
Let $\bb{C}_\lambda$ be a one-dimensional $(\frak{l},L\cap K)$-module.
We say $\lambda$ is in the {\it good range} 
(resp. {\it weakly good range}) if 
\begin{align*}
{\rm Re}\, \langle \lambda+\rho(\frak{n}), \alpha
 \rangle > 0\;{\rm (resp}.\geq 0{\rm )} 
\  {\rm for}\  \alpha\in \Delta(\frak{u},\frak{h}),
\end{align*}
and in the {\it fair range} 
(resp. {\it weakly fair range}) if 
\begin{align*}
{\rm Re}\, \langle \lambda+\rho(\frak{u}), \alpha
 \rangle > 0\;{\rm (resp}.\geq 0{\rm)}  \ 
{\rm for}\ \alpha\in \Delta(\frak{u},\frak{h}).
\end{align*}
}
\end{de}

\begin{de}
{\rm 
Let $V$ be a $(\frak{g},K)$-module.
We say $V$ is {\it unitarizable} 
if $V$ admits a Hermitian inner product with respect to which 
$\frak{g}_0$ acts by skew-Hermitian operators on $V$.
}
\end{de}

The $(\frak{g},K)$-module
 $A_\frak{q}(\lambda)$ has the following properties.

\begin{fact}[{\cite{KnVo}}]
\label{aqlam}
Let $\bb{C}_\lambda$ be a one-dimensional
 $(\frak{l},L\cap K)$-module.

\begin{itemize}
\item[$({\rm{i}})$] 
$A_\frak{q}(\lambda)$ is of finite length as a $(\frak{g},K)$-module.

\item[$({\rm{ii}})$] 
If $\lambda$ is in the weakly good range,
 $A_\frak{q}(\lambda)$ is irreducible or zero.

\item[$({\rm{iii}})$] 
If $\lambda$ is in the good range,
 $A_\frak{q}(\lambda)$ is nonzero.

\item[$({\rm{iv}})$] 
If $\lambda$ is unitary and in the weakly fair range, 
 then $A_\frak{q}(\lambda)$ is unitarizable.
\end{itemize}
\end{fact}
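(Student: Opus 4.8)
The plan is to treat the four assertions in order of increasing difficulty, deducing each from the functorial properties of cohomological induction recalled above together with the standard machinery of \cite{KnVo}.

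For (i) I would write $A_\frak{q}(\lambda)=(\Pi_{L\cap K}^{K})_s(Z)$ with $Z=U(\frak{g})\otimes_{U(\Bar{\frak{q}})}\bb{C}_{\lambda+2\rho(\frak{u})}$, and note that $Z$ is finitely generated over $U(\frak{g})$ and carries a $\frak{z}(\frak{g})$-infinitesimal character, namely the Harish--Chandra parameter attached to $\lambda+\rho(\frak{n})$. The derived Bernstein functors are $\frak{z}(\frak{g})$-linear, so $A_\frak{q}(\lambda)$ still has an infinitesimal character, and they carry finitely generated modules to admissible ones (the filtration/approximate-identity argument of \cite{KnVo}); admissibility also follows at once from the $K$-multiplicity bound of Fact~\ref{upblattner}. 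A $(\frak{g},K)$-module that is admissible, finitely generated and has an infinitesimal character has finite length by Harish--Chandra's theorem, which gives (i).

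For (ii) and (iii) the plan is to invoke the vanishing theorem together with the duality between the $P$- and $I$-pictures. First, if $\lambda$ is in the weakly good range then $(\Pi^{K}_{L\cap K})_j(Z)=0$ for $j\neq s$, and dually the top Zuckerman-derived module is the only nonvanishing one; this is the Casimir/Hochschild--Serre estimate on $\frak{u}\cap\frak{k}$-cohomology in \cite{KnVo}. Combined with the duality theorem identifying $(P_{\Bar{\frak{q}},L\cap K}^{\frak{g},K})_s(\bb{C}_{\lambda+2\rho(\frak{u})})$ with $(I_{\frak{q},L\cap K}^{\frak{g},K})^s(\bb{C}_{\lambda+2\rho(\frak{u})})$ --- the same duality underlying the ${\cal D}$-module realization of Theorem~\ref{locZuc} --- one exhibits $A_\frak{q}(\lambda)$ simultaneously as a quotient of the generalized Verma-type module $Z$ (hence with a unique irreducible quotient or zero) and as a submodule of a coinduced module (hence with a unique irreducible submodule or zero). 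Vogan's lemma on a map from a module with unique irreducible quotient to one with unique irreducible submodule then forces the natural map to have irreducible image equal to both, giving (ii). For (iii) I would produce a nonzero $K$-type explicitly: the bottom-layer $K$-type of highest weight $\lambda|_{\frak{t}}+2\rho(\frak{u}\cap\frak{p})$ occurs with multiplicity exactly one when $\lambda$ is in the good range, as one checks from the $(\Gamma^{K}_{L\cap K})^{s}$-description and Kostant's computation of $H^{*}(\frak{u}\cap\frak{k};\,\cdot\,)$; in the weakly good range this multiplicity can drop to zero, which is precisely when $A_\frak{q}(\lambda)=0$, and one passes between the good and weakly good ranges by a translation-functor argument.

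Assertion (iv) is the main obstacle, and I would split it in two. First, when $\lambda$ is unitary, $\bb{C}_\lambda$ underlies a unitary character of $L_0$ and so carries a positive-definite invariant Hermitian form; this form is transported through the Shapovalov-type pairing on cohomologically induced modules to a nondegenerate invariant Hermitian form on $A_\frak{q}(\lambda)$. Second, I would show this form is positive definite throughout the weakly fair range by a deformation argument in $\lambda$: the signature of the form on each fixed $K$-type is locally constant in $\lambda$ as long as the form stays nondegenerate, nondegeneracy can only fail at reducibility points, and these are controlled inside the weakly fair range --- irreducible by (ii) in the weakly good subrange, and analyzed by a Jantzen-type filtration in the gap between weakly fair and weakly good. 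At the good end of the deformation the module is irreducible and the form is manifestly positive on the bottom-layer $K$-type, so positivity propagates. Equivalently, one cites the Signature Theorem of \cite{KnVo}, which expresses the signature character of $A_\frak{q}(\lambda)$ directly in terms of that of $\bb{C}_\lambda$ as an $(\frak{l},L\cap K)$-module --- trivially positive for a unitary character --- yielding (iv) in one stroke. The genuinely deep inputs here are the vanishing theorem and this signature computation; since both are the technical core of \cite{KnVo} we simply cite them, while the ${\cal D}$-module construction of Theorem~\ref{locZuc} reproves the portion of (i)--(iii) needed in the sequel.
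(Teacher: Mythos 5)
The paper records this statement as Fact~\ref{aqlam} with a citation to \cite{KnVo} and gives no proof of its own, so there is no internal argument against which to compare your write-up. As a road-map of the Knapp--Vogan proofs your sketch is broadly sound: finite length via finite generation, admissibility (e.g.\ from Fact~\ref{upblattner}) and an infinitesimal character; degree-$s$ collapse via the vanishing theorem, which in \cite{KnVo} indeed holds already in the weakly good range; nonvanishing via the multiplicity-one bottom-layer $K$-type in the good range; and unitarizability via the Signature Theorem, which is exactly how \cite{KnVo} handles (iv). One imprecision in (ii) is worth correcting: $A_\frak{q}(\lambda)=(\Pi_{L\cap K}^{K})_s(Z)$ is the $s$-th left-derived Bernstein functor applied to $Z$, not a quotient of $Z$ --- indeed $Z$ is only a $(\frak{g},L\cap K)$-module and lives in a different category, so one cannot ``exhibit $A_\frak{q}(\lambda)$ as a quotient of the generalized Verma-type module $Z$.'' The ``unique irreducible quotient/unique irreducible submodule'' mechanism in \cite{KnVo} is applied instead to the two standard $(\frak{g},K)$-modules $(P_{\bar{\frak{q}},L\cap K}^{\frak{g},K})_s$ and $(I_{\frak{q},L\cap K}^{\frak{g},K})^s$, each with a cyclic multiplicity-one bottom layer in the (weakly) good range, and irreducibility follows because the natural duality map restricts to an isomorphism on that bottom layer. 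With that adjustment your sketch matches the standard argument; none of it is needed in the present paper, which simply quotes the Fact.
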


%%%%%%%%%%%%%%%%%%%%%%%%%%%%%%%%%%%%%%%%%%%%%%%
%%%%%%%%%%%%%%%%%%%%%%%%%%%%%%%%%%%%%%%%%%%%%%%
%%%%%%%%%%%%%%%%%%%%%%%%%%%%%%%%%%%%%%%%%%%%%%%
%%%%%%%%%%%%%%%%%%%%%%%%%%%%%%%%%%%%%%%%%%%%%%%
%%%%%%%%%%%%%%%%%%%%%%%%%%%%%%%%%%%%%%%%%%%%%%%
%%%%%%%%%%%%%%%%%%%%%%%%%%%%%%%%%%%%%%%%%%%%%%%

\section{Differential Operators on Homogeneous Spaces}
\label{sec:homog}
We introduce notation and lemmas concerning
 homogeneous spaces and differential operators, 
 used in the subsequent sections.
Let $G$ be a complex linear algebraic group
 acting on a smooth variety $X$.
Then the infinitesimal action is defined as a Lie algebra homomorphism
 from the Lie algebra $\frak{g}$ of $G$ to the space of
 vector fields ${\cal T}(X)$ on $X$.
Denote the image of $\xi\in\frak{g}$ by $\xi_X\in{\cal T}(X)$.
Then $\xi_X$ gives a first order differential operator
 on the structure sheaf ${\cal O}_X$.

Suppose that $X=G$ and the action of $G$ on $X$
 is the product from left: 
\[G\to \operatorname{Aut}(X),\qquad g\mapsto (g'\mapsto gg')\]
In this case we write the vector field $\xi_X$ as $\xi_G^L$,
 which is a right invariant vector field on $G$.
Similarly, if the action of $G$ on $X=G$ is the product from right:
 \[G\to \operatorname{Aut}(X),\qquad g\mapsto (g'\mapsto g'g^{-1}),\]
 we write the vector field $\xi_X$ as $\xi_G^R$, 
 which is a left invariant vector field on $G$.
Let $\xi_1,\cdots,\xi_n$ be a basis of $\frak{g}$
 and write $\xi^1,\cdots,\xi^n \in\frak{g}^*$ for the dual basis.
Define regular functions $\alpha^i_j, \beta^j_i$ on $G$
 for $1\leq i,j\leq n$ by
\begin{align}
\label{lrdef}
\alpha^i_j(g):=\langle\xi^i, \Ad(g^{-1})\xi_j \rangle,\quad
\beta^j_i(g):=\langle\xi^j, \Ad(g)\xi_i\rangle.
\end{align}
Then it follows that
\begin{align*}
(\xi_j)_G^L=-\sum_{i=1}^n \alpha^i_j\cdot(\xi_i)_G^R, \quad
(\xi_i)_G^R=-\sum_{j=1}^n \beta^j_i\cdot(\xi_j)_G^L, \quad
\sum_{j=1}^n \alpha^i_j\beta^j_k=\delta^i_k.
\end{align*}
We see $(\xi_j)_G^L$ as a differential operator on $G$. 
Then the function $(\xi_j)_G^L(\beta^j_i)$ on $G$ is written as 
\[(\xi_j)_G^L(\beta^j_i)=
-\langle \xi^j, [\xi_j, \Ad (\cdot) \xi_i]\rangle.\]
Hence
\begin{align}
\label{lrch}
\sum_{j=1}^n(\xi_j)_G^L(\beta^j_i)=
-\sum_{j=1}^n \langle \xi^j, [\xi_j, \Ad (\cdot) \xi_i]\rangle
={\rm Trace}\ad (\Ad(\cdot)\xi_i)={\rm Trace}\ad (\xi_i).
\end{align}

Let $H$ be a complex algebraic subgroup of $G$. 
The quotient $X:=G/H$ is defined as a smooth algebraic variety
 (see \cite[\S II.6]{Bo}). 
Denote by $\pi:G\to X$ the quotient map.
Let $V$ be a complex vector space with an algebraic action $\rho$ of $H$.
We define the
 {\it ${\cal O}_X$-module ${\cal V}_X$ associated with $V$}
 as the subsheaf of $\pi_*{\cal O}_G\otimes V$ given by
\[{\cal V}_X(U):=\{f\in {\cal O}(\pi^{-1}(U))\otimes V:
 f(gh)=\rho(h)^{-1}f(g)\}\]
for an open set $U\subset X$.
Here, we identify sections of ${\cal O}(\pi^{-1}(U))\otimes V$
 with regular $V$-valued functions on $\pi^{-1}(U)$.
Analogous identification will be used for other varieties.
The ${\cal O}_X$-module ${\cal V}_X$ corresponds to
 the $G$-equivariant vector bundle with typical fiber $V$.

The $G$-equivariant structure on ${\cal O}_G$ by the left translation induces 
 a $G$-equivariant structure on ${\cal V}_X$.
By differentiating it,
 the infinitesimal action of $\xi\in\frak{g}$ is given by
 $f\mapsto \xi_G^L f$.

We write $\operatorname{Ind}_H^{G}(V)$ for the
 space of global sections $\Gamma(X, {\cal V}_X)$
 regarded as an algebraic $G$-module.
Then by the Frobenius reciprocity,
\[{\Hom}_G(W, \operatorname{Ind}_H^{G}(V))
\xrightarrow{\sim} {\Hom}_H(W,V)
\]
for any algebraic $G$-module $W$.

\begin{lem}
\label{indred}
If $G$ and $H$ are reductive, then
\[
R(G)\otimes_{R(H)} V \simeq \operatorname{Ind}_H^{G}(V)
\]
as $G$-modules.
\end{lem}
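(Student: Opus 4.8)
The plan is to identify both sides with, respectively, the $H$-coinvariants and the $H$-invariants of one and the same $H$-module, and then to pass between the two using that $H$, being reductive, is linearly reductive.

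First I would rewrite the left-hand side. Identifying $R(G)$ with the algebra $\mathcal{O}(G)$ of regular functions as in the discussion preceding Definition~\ref{de:pair} (a complex reductive linear algebraic group is the complexification of its maximal compact subgroup), the left $R(G)$-action on $R(G)$ becomes the left-translation action of $G$ on $\mathcal{O}(G)$, while the right $R(H)$-module structure on $R(G)$ used to form the relative tensor product — the one induced by the ring homomorphism $R(H)\to R(G)$ (pushforward of $K$-finite distributions along $H\hookrightarrow G$) together with right convolution in $R(G)$ — becomes the action of $H$ on $\mathcal{O}(G)$ by right translation, since right convolution by a distribution supported on $H$ is an average of right translates by elements of $H$. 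With these identifications, and by the same elementary computation that for a finite group $H$ identifies $M\otimes_{\mathbb{C}[H]}N$ with the coinvariants of the diagonal action, one gets $R(G)\otimes_{R(H)}V\simeq(\mathcal{O}(G)\otimes V)_H$, the coinvariants being taken for the diagonal $H$-action: right translation on $\mathcal{O}(G)$, the given action $\rho$ on $V$. Here one uses that $\mathcal{O}(G)$ and $V$ are locally finite $H$-modules, so the relative tensor product over the non-unital algebra $R(H)$ behaves as expected.

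On the other hand, taking $U=X$ in the definition of $\mathcal{V}_X$ identifies $\operatorname{Ind}_H^{G}(V)=\Gamma(G/H,\mathcal{V}_{G/H})$ with $\{f\in\mathcal{O}(G)\otimes V:f(gh)=\rho(h)^{-1}f(g)\}$, and the relation $f(gh)=\rho(h)^{-1}f(g)$ says precisely that $f$ is fixed by the same diagonal $H$-action; hence $\operatorname{Ind}_H^{G}(V)=(\mathcal{O}(G)\otimes V)^H$. Now, since $H$ is reductive it is linearly reductive, so for the locally finite $H$-module $M:=\mathcal{O}(G)\otimes V$ the projection onto the trivial isotypic component (the Reynolds operator) splits the inclusion $M^H\hookrightarrow M$ and exhibits the composite $M^H\hookrightarrow M\twoheadrightarrow M_H$ as an isomorphism. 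This gives $\operatorname{Ind}_H^{G}(V)\xrightarrow{\sim}R(G)\otimes_{R(H)}V$, and it is $G$-equivariant because the left-translation action of $G$ on $\mathcal{O}(G)$ commutes with the right-translation $H$-action (and with $\rho$, which involves no $G$), hence commutes with the Reynolds operator. The step I expect to be the main obstacle is the bookkeeping in the first part: verifying that the abstract right $R(H)$-module structure on $R(G)$ really is right translation on $\mathcal{O}(G)$, and correctly tracking the inversions $h\leftrightarrow h^{-1}$ that relate $\otimes_{R(H)}$ to a coinvariant space; the substantive input — that invariants coincide with coinvariants for a reductive group — is standard.
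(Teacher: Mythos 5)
Your proposal is correct and follows essentially the same route as the paper: the paper likewise identifies $\operatorname{Ind}_H^G(V)$ with the $H$-invariants of $\mathcal{O}(G)\otimes_{\mathbb{C}}V$ under the diagonal action $h(f\otimes v)=f(\,\cdot\,h)\otimes hv$, and identifies the canonical surjection $R(G)\otimes_{\mathbb{C}}V\to R(G)\otimes_{R(H)}V$ with the projection onto that invariant subspace, using the complete reducibility of the locally finite $H$-module $\mathcal{O}(G)\otimes_{\mathbb{C}}V$. The extra detail you supply (coinvariants versus invariants, the Reynolds operator splitting) is exactly the content the paper compresses into one sentence.
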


\begin{proof}
We give the $H$-action on ${\cal O}(G)\otimes_\bb{C} V$ by
 $h(f\otimes v)\mapsto f(\,\cdot\,h)\otimes hv$.
The $H$-module ${\cal O}(G)\otimes_\bb{C} V$ decomposes as 
 a direct sum of irreducible
 factors because $H$ is reductive.
From the definition of ${\cal V}_X$, the space of global sections
 $\operatorname{Ind}_H^{G}(V)$ is equal to the set of $H$-invariant elements
 $({\cal O}(G)\otimes_\bb{C} V)^H$.
With the identification ${\cal O}(G)\simeq R(G)$, 
 we see that the canonical surjective map
 $R(G) \otimes_\bb{C} V \to R(G)\otimes_{R(H)} V$ is the projection onto
 the $H$-invariants.
Hence we have 
\[
R(G)\otimes_{R(H)} V\simeq ({\cal O}(G)\otimes_\bb{C} V)^H
 \simeq \operatorname{Ind}_H^{G}(V)
\]
as $G$-modules.
\end{proof}

Suppose that $H'$ is another algebraic subgroup
 of $G$ such that $H\subset H'$.
Let $X':=G/H'$ and $S:=H'/H$ be the quotient varieties and
 $\varpi : X\to X'$ the canonical map.
Write ${\cal V}_S$ for the ${\cal O}_S$-module associated with $V$.
Let $W:={\rm Ind}_H^{H'}(V)$ and
 let ${\cal W}_{X'}$ be the ${\cal O}_{X'}$-module
 associated with the $H'$-module $W$.

The following lemma is immediate from the definition,
 which indicates `induction by stages'
 in our setting.
\begin{lem}
\label{push}
In the setting above, there is a canonical 
 $G$-equivariant isomorphism 
$\varpi_*{\cal V}_X\to {\cal W}_{X'}.$
\end{lem}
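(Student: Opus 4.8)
The plan is to unwind the definitions of both sides of the asserted isomorphism $\varpi_*{\cal V}_X \to {\cal W}_{X'}$ and observe that they compute the same subsheaf of regular functions, then check $G$-equivariance. First I would recall that since $\pi : G \to X$ and the composite $G \to X \to X'$ are the quotient maps for $H$ and $H'$ respectively, for an open set $U \subset X'$ with preimage $\pi_{X'}^{-1}(U) \subset G$ we have $\varpi^{-1}(U) = \pi^{-1}(\pi_{X'}^{-1}(U))/H$ (quotient by the right $H$-action). Thus a section of $\varpi_*{\cal V}_X$ over $U$ is, by definition of ${\cal V}_X$ and of the pushforward, a regular function $f \in {\cal O}(\pi_{X'}^{-1}(U)) \otimes V$ satisfying $f(gh) = \rho(h)^{-1} f(g)$ for all $h \in H$.

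Next I would describe sections of ${\cal W}_{X'}$ over $U$: by definition of the ${\cal O}_{X'}$-module associated with the $H'$-module $W = \operatorname{Ind}_H^{H'}(V)$, such a section is a regular $W$-valued function $F$ on $\pi_{X'}^{-1}(U)$ with $F(g h') = \rho_W(h')^{-1} F(g)$ for $h' \in H'$, where $\rho_W$ is the $H'$-action on $W$. Now $W = \Gamma(H'/H, {\cal V}_S)$ is itself a space of regular functions on $H'$ with values in $V$ transforming by $\rho$ under right $H$-translation, and the $H'$-action on $W$ is by left translation on $H'$. So the data of such an $F$ is exactly a regular function of two variables $(g, h'') \mapsto F(g)(h'')$, regular on $\pi_{X'}^{-1}(U) \times H'$, satisfying $F(g)(h'' h) = \rho(h)^{-1} F(g)(h'')$ for $h \in H$ and $F(g h')(h'') = F(g)(h' h'')$ for $h' \in H'$. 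The map I would write down sends such an $F$ to the one-variable function $\tilde{f}(g) := F(g)(e)$; conversely, given $f$ as in the previous paragraph, set $F(g)(h'') := f(g h'')$. The transformation laws match up directly: $F(g)(h'' h) = f(g h'' h) = \rho(h)^{-1} f(g h'') = \rho(h)^{-1} F(g)(h'')$, and $F(gh')(h'') = f(gh'h'') = F(g)(h'h'')$; and in the reverse direction $\tilde{f}(gh) = F(gh)(e) = F(g)(h) = \rho(h)^{-1}F(g)(e) = \rho(h)^{-1}\tilde f(g)$. These two assignments are mutually inverse, ${\cal O}_{X'}$-linear, and compatible with restriction to smaller opens, so they define an isomorphism of ${\cal O}_{X'}$-modules.

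Finally I would verify $G$-equivariance: the $G$-equivariant structures on both ${\cal V}_X$ and ${\cal W}_{X'}$ (hence on $\varpi_*{\cal V}_X$) are all induced by left translation of $G$ on itself, which commutes with the right translations of $H$ and $H'$ used to cut out the subsheaves; so the correspondence $f \leftrightarrow F$ above visibly intertwines the left-translation actions, giving a $G$-equivariant isomorphism. The only genuinely delicate point is bookkeeping of which variable each group acts on — keeping the right $H$- and $H'$-translations straight from the left $G$-translation — together with a brief remark that all identifications of sections with honest regular functions are legitimate because $\pi$, $\pi_{X'}$, and $\varpi$ are the structural quotient maps; I expect this tracking of the three simultaneous translation actions to be the main (though routine) obstacle. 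Since everything is determined on a cover of $X'$ and glues canonically, no choices are involved, justifying the word ``canonical'' in the statement.
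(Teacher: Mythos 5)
Your argument is correct and is exactly the definitional unwinding the paper intends: the paper states this lemma as ``immediate from the definition,'' and what you have written out --- identifying sections of $\varpi_*{\cal V}_X$ and of ${\cal W}_{X'}$ over $U$ with the same space of $V$-valued regular functions on $\pi_{X'}^{-1}(U)$ via the dictionary $F(g)(h'')=f(gh'')$, together with the observation that left $G$-translation commutes with the right $H$- and $H'$-translations used to cut out both subsheaves --- is precisely the verification being elided. One small slip: you write $\varpi^{-1}(U)=\pi^{-1}(\pi_{X'}^{-1}(U))/H$, but $\pi_{X'}^{-1}(U)$ already sits inside $G$, so you mean simply $\varpi^{-1}(U)=\pi_{X'}^{-1}(U)/H$ (equivalently $\pi^{-1}(\varpi^{-1}(U))=\pi_{X'}^{-1}(U)$).
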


Let $K$ be an algebraic subgroup of $G$. 
The inclusion map $i:K \to G$ induces the immersion
 $i: Y:=K /(H\cap K)\to X$ of algebraic variety.
Define the ideal ${\cal I}_Y$ of ${\cal O}_X$ as 
\[{\cal I}_Y:=\{ f\in {\cal O}_X : f(y)=0 {\rm\ for\ } y\in Y\},
\] 
so ${\cal I}_Y$ is the defining ideal of the closure $\ovl{Y}$ of $Y$.
We denote by ${\cal I}_Y^p$ the $p$-th power of ${\cal I}_Y$ for $p\geq 0$.
We use $i^{-1}$ for the inverse image of sheaves of abelian groups.
Then $i^{-1}({\cal I}_Y^p/{\cal I}_Y^{p+1})$
 is isomorphic to the $K$-equivariant ${\cal O}_Y$-module 
 associated with the dual of the $p$-th symmetric tensor product
 $S^p(\frak{g}/(\frak{h}+\frak{k}))^*$
 with the coadjoint action of $H \cap K$.
Let ${\cal T}_X$ be the sheaf of vector fields in $X$
 and let ${\cal T}_{X/Y}$ be the sheaf of vector fields in $X$
 tangent to $Y$, namely
\[
{\cal T}_{X/Y}
:= \{\xi\in {\cal T}_{X}:\xi({\cal I}_Y)\subset{\cal I}_Y\}.
\]
Then $\xi\in {\cal T}_X$ operates on ${\cal O}_X$ and
 induces an ${\cal O}_Y$-homomorphism
\[\xi: i^{-1}({\cal I}_Y/{\cal I}_Y^2)\to 
i^{-1}({\cal O}_X/{\cal I}_Y)\simeq {\cal O}_Y.\]
This gives an isomorphism of locally free ${\cal O}_Y$-modules 
\[
i^{-1}({\cal T}_X/{\cal T}_{X/Y})
\simeq  {\cal H}om_{{\cal O}_Y}
(i^{-1}({\cal I}_Y/{\cal I}_Y^2), {\cal O}_Y),
\]
which correspond to the normal bundle of $Y$ in $X$.

We denote by ${\cal D}_X$ the ring of differential operators
 on $X$. Then ${\cal D}_X$ has the filtration given by 
\[F_p {\cal D}_X:=
\{D\in {\cal D}_X : 
\xi ({\cal I}_Y^{p+1}) \subset {\cal I}_Y\}, 
\]
 which is called the filtration by normal degree with respect to $i$.
A section of $F_p {\cal D}_X$ is locally written as
 $\sum \eta_1\cdots \eta_r\xi_1 \dots \xi_q$,
 where $q\leq p$, $\xi_1,\dots,\xi_q\in {\cal T}_X$,
 and $\eta_1,\dots,\eta_r\in {\cal T}_{X/Y}$.
Let $G_p {\cal D}_X(\subset{\cal D}_X)$ be the sheaf of
 differential operators on $X$ with rank equal or less than $p$.
For $D\in G_p {\cal D}_X$, the
 differential operator $D:{\cal O}_X\to {\cal O}_X$ induces
 an ${\cal O}_Y$-homomorphism
\[i^{-1}({\cal I}_Y^p/{\cal I}_Y^{p+1})
 \to i^{-1}({\cal O}_X/{\cal I}_Y)
 \simeq {\cal O}_Y, 
\]
which we denote by $\gamma(D)$.
Write 
\[i^{-1}({\cal I}_Y^p/{\cal I}_Y^{p+1})\spcheck
:=  {\cal H}om_{{\cal O}_Y}
 (i^{-1}({\cal I}_Y^p/{\cal I}_Y^{p+1}),{\cal O}_Y)
\]
for the dual of $i^{-1}({\cal I}_Y^p/{\cal I}_Y^{p+1})$.
The map $D\mapsto \gamma(D)$ gives an isomorphism
 of ${\cal O}_Y$-modules
\begin{align}
\label{eqn:dfil}
i^{-1}G_p {\cal D}_X
/i^{-1}(G_p {\cal D}_X\cap F_{p-1}{\cal D}_X)
&\simeq i^{-1}({\cal I}_Y^p/{\cal I}_Y^{p+1})\spcheck.
\end{align}
They are also isomorphic to the $p$-th symmetric tensor of
 the locally free ${\cal O}_Y$-module
 $i^{-1}({\cal I}_Y/{\cal I}_Y^2)\spcheck$.

Let ${\cal M}$ be a left ${\cal D}_Y$-module.
The Lie algebra $\frak{k}$ acts on ${\cal M}$ by
 $\eta_Y$ for $\eta\in\frak{k}$.
Write $\Omega_X$ and $\Omega_Y$ for the canonical sheaves of $X$ and $Y$,
 respectively.
The push-forward by $i$ is defined by 
\[i_+{\cal M}:=i_*(({\cal M}\otimes_{{\cal O}_Y} \Omega_Y)
 \otimes_{{\cal D}_Y}
 {i^*{\cal D}_X})
 \otimes_{{\cal O}_X} \Omega_X\spcheck. \]
Here, we write $i_*$ for the push-forward of ${\cal O}$-modules or
 $\bb{C}$-modules and
 $i_+$ for the push-forward of ${\cal D}$-modules.
$i^*$ denotes the pull-back of ${\cal O}$-modules.
It follows from the definition that 
\[i^{-1}i_+{\cal M}\simeq
 ({\cal M}\otimes_{{\cal O}_Y} \Omega_Y)
 \otimes_{{\cal D}_Y}
 ({{\cal O}_Y\otimes_{i^{-1}{\cal O}_X}i^{-1}{\cal D}_X})
 \otimes_{i^{-1}{\cal O}_X} i^{-1}\Omega_X\spcheck. \]
By using the filtration by normal degree, 
 we define the $(i^{-1}{\cal O}_X)$-module 
\[F_p i^{-1}i_+{\cal M}:=({\cal M}\otimes_{{\cal O}_Y} \Omega_Y)
 \otimes_{{\cal D}_Y}
 ({\cal O}_Y \otimes_{i^{-1}{\cal O}_X}i^{-1}F_p {\cal D}_X)
 \otimes_{i^{-1}{\cal O}_X} i^{-1}\Omega_X \spcheck 
\]
for $p\geq 0$.
This is well-defined because
 ${\cal O}_Y \otimes_{i^{-1}{\cal O}_X}i^{-1}F_p {\cal D}_X$ is
 stable under the left ${\cal D}_Y$-action.
We see that
 $i^{-1}F_p {\cal D}_X$ is a flat $(i^{-1}{\cal O}_X)$-module,
 ${\cal O}_Y \otimes_{i^{-1}{\cal O}_X}i^{-1}F_p {\cal D}_X$ is
 a left flat ${\cal D}_Y$-module,
 and $i^{-1}\Omega_X\spcheck$ is a flat $(i^{-1}{\cal O}_X)$-module.
Hence the $(i^{-1}{\cal O}_X)$-modules $F_p i^{-1}i_+{\cal M}$
 form a filtration of $i^{-1}i_+{\cal M}$.

Consider the restriction of the $\frak{g}$-action on $i_+{\cal M}$
 to $\frak{k}$.
For $\eta\in\frak{k}$, the vector field $\eta_X$ is tangent to $Y$.
Hence the $\frak{k}$-action stabilizes each $F_pi^{-1}i_+{\cal M}$ and
 it induces an action on the quotient
 $F_pi^{-1}i_+{\cal M}/F_{p-1}i^{-1}i_+{\cal M}$. 
Moreover, $F_p{\cal D}_X \cdot {\cal I}_Y\subset F_{p-1}{\cal D}_X$
 implies that
 $i^{-1}{\cal I}_Y \cdot F_pi^{-1}i_+{\cal M}\subset F_{p-1}i^{-1}i_+{\cal M}$.
Therefore $F_pi^{-1}i_+{\cal M}/F_{p-1}i^{-1}i_+{\cal M}$
 carries an ${\cal O}_Y$-module structure.
Write $\Omega_{X/Y}:= \Omega_Y \spcheck \otimes_{i^{-1}{\cal O}_X}
 i^{-1}\Omega_{X}$ for the relative canonical sheaf.
The $K$-equivariant structures on
 the ${\cal O}_Y$-modules $\Omega_{X/Y} \spcheck$ and
 $i^{-1}({\cal I}^p/{\cal I}^{p+1})$ give
 $\frak{k}$-actions on them.

\begin{lem}
\label{fil}
There is an isomorphism of ${\cal O}_Y$-modules
\[
F_p i^{-1}i_+{\cal M} / F_{p-1} i^{-1}i_+{\cal M}
\simeq {\cal M}\otimes_{{\cal O}_Y} \Omega_{X/Y} \spcheck
\otimes_{{\cal O}_Y} i^{-1}({\cal I}_Y^p/{\cal I}_Y^{p+1}) \spcheck
\]
 that commutes with the actions of $\frak{k}$.
Here, the $\frak{k}$-action on the right side is given by the
 tensor product of the action on each factors defined above.
\end{lem}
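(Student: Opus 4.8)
The plan is to compute the associated graded of the filtration $F_\bullet i^{-1}i_+{\cal M}$ directly from the defining formula
\[
F_p i^{-1}i_+{\cal M}=({\cal M}\otimes_{{\cal O}_Y} \Omega_Y)
 \otimes_{{\cal D}_Y}
 ({\cal O}_Y \otimes_{i^{-1}{\cal O}_X}i^{-1}F_p {\cal D}_X)
 \otimes_{i^{-1}{\cal O}_X} i^{-1}\Omega_X \spcheck,
\]
using that the only term depending on $p$ is the middle factor. First I would observe that since ${\cal O}_Y \otimes_{i^{-1}{\cal O}_X}i^{-1}F_p {\cal D}_X$ is a left flat ${\cal D}_Y$-module for each $p$ and the quotients by the subfiltration are ${\cal O}_Y$-flat, tensoring the short exact sequences
\[
0\to {\cal O}_Y \otimes i^{-1}(F_{p-1}{\cal D}_X)
\to {\cal O}_Y \otimes i^{-1}(F_{p}{\cal D}_X)
\to {\cal O}_Y \otimes i^{-1}(F_{p}{\cal D}_X/F_{p-1}{\cal D}_X)\to 0
\]
with $({\cal M}\otimes_{{\cal O}_Y}\Omega_Y)$ over ${\cal D}_Y$ on the left and with $i^{-1}\Omega_X\spcheck$ over $i^{-1}{\cal O}_X$ on the right stays exact. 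Hence
\[
F_p i^{-1}i_+{\cal M}/F_{p-1}i^{-1}i_+{\cal M}
\simeq ({\cal M}\otimes_{{\cal O}_Y}\Omega_Y)\otimes_{{\cal D}_Y}
\bigl({\cal O}_Y\otimes_{i^{-1}{\cal O}_X} i^{-1}(F_p{\cal D}_X/F_{p-1}{\cal D}_X)\bigr)
\otimes_{i^{-1}{\cal O}_X} i^{-1}\Omega_X\spcheck.
\]

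Next I would identify the graded piece of the normal-degree filtration of ${\cal D}_X$. The quotient $i^{-1}(F_p{\cal D}_X/F_{p-1}{\cal D}_X)$, after applying ${\cal O}_Y\otimes_{i^{-1}{\cal O}_X}(-)$, is a ${\cal D}_Y$-bimodule (left ${\cal D}_Y$-action from vector fields tangent to $Y$, right action from ${\cal D}_Y$ acting on ${\cal O}_X$ near $Y$). The key point, which I expect to be the main technical obstacle, is to produce a ${\cal D}_Y$-bilinear isomorphism
\[
{\cal O}_Y\otimes_{i^{-1}{\cal O}_X} i^{-1}(F_p{\cal D}_X/F_{p-1}{\cal D}_X)
\;\simeq\;
i^{-1}({\cal I}_Y^p/{\cal I}_Y^{p+1})\spcheck
\otimes_{{\cal O}_Y}
({\cal O}_Y\otimes_{i^{-1}{\cal O}_X} i^{-1}{\cal D}_X),
\]
compatible with the $\frak{k}$-equivariant structures. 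The left-to-right map sends a normal-degree-$p$ operator $D$ to $\gamma(D)$ (the symbol map of \eqref{eqn:dfil}) in the first factor, together with $D$ viewed as a right-differential-operator along $Y$ in the second; surjectivity and injectivity follow from the local description of $F_p{\cal D}_X$ as sums $\eta_1\cdots\eta_r\,\xi_1\cdots\xi_q$ with $q\le p$, $\eta_i\in{\cal T}_{X/Y}$, combined with the isomorphism \eqref{eqn:dfil}. The $\frak{k}$-equivariance is automatic because each $\eta_X$ for $\eta\in\frak{k}$ is tangent to $Y$, so the $K$-equivariant structure on ${\cal D}_X$ restricts to the one on ${\cal D}_Y$ and acts on $\gamma(D)$ through the coadjoint action of $H\cap K$ on $i^{-1}({\cal I}_Y^p/{\cal I}_Y^{p+1})$ already recorded in the text.

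Finally I would substitute this identification into the displayed formula for the graded piece, move the $i^{-1}({\cal I}_Y^p/{\cal I}_Y^{p+1})\spcheck$ factor outside (it is an ${\cal O}_Y$-module, so it commutes past the ${\cal D}_Y$-tensor), and recognize what remains: $({\cal M}\otimes_{{\cal O}_Y}\Omega_Y)\otimes_{{\cal D}_Y}({\cal O}_Y\otimes i^{-1}{\cal D}_X)\otimes_{i^{-1}{\cal O}_X}i^{-1}\Omega_X\spcheck$ collapses, via the standard identity $({\cal N}\otimes\Omega_Y)\otimes_{{\cal D}_Y}({\cal O}_Y\otimes i^{-1}{\cal D}_X)\otimes i^{-1}\Omega_X\spcheck\simeq {\cal N}\otimes_{{\cal O}_Y}\Omega_Y\spcheck\otimes_{i^{-1}{\cal O}_X}i^{-1}\Omega_X = {\cal N}\otimes_{{\cal O}_Y}\Omega_{X/Y}\spcheck$ for ${\cal N}={\cal M}$, to ${\cal M}\otimes_{{\cal O}_Y}\Omega_{X/Y}\spcheck$. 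This yields
\[
F_p i^{-1}i_+{\cal M} / F_{p-1} i^{-1}i_+{\cal M}
\simeq {\cal M}\otimes_{{\cal O}_Y} \Omega_{X/Y} \spcheck
\otimes_{{\cal O}_Y} i^{-1}({\cal I}_Y^p/{\cal I}_Y^{p+1}) \spcheck,
\]
and tracking the $\frak{k}$-action through each step — which only involves the $K$-equivariant structures on $\Omega_{X/Y}\spcheck$, on $i^{-1}({\cal I}_Y^p/{\cal I}_Y^{p+1})$, and on ${\cal M}$ — gives the claimed compatibility. The one point requiring care beyond bookkeeping is checking the ${\cal D}_Y$-bilinearity of the symbol isomorphism above, i.e.\ that the splitting into ``normal symbol'' times ``tangential operator'' is well-defined modulo $F_{p-1}$; this I would verify locally in coordinates adapted to the immersion $i$, where ${\cal I}_Y$ is generated by a regular sequence.
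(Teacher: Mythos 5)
Your overall strategy is the same as the paper's: compute the associated graded of the normal-degree filtration by establishing that only the middle factor varies with $p$, then identify the graded piece of $i^{-1}F_\bullet{\cal D}_X$ and collapse. But the key intermediate isomorphism you propose is wrong, and the error is not cosmetic.

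You claim a ${\cal D}_Y$-bilinear isomorphism
\[
{\cal O}_Y\otimes_{i^{-1}{\cal O}_X} i^{-1}(F_p{\cal D}_X/F_{p-1}{\cal D}_X)
\;\simeq\;
i^{-1}({\cal I}_Y^p/{\cal I}_Y^{p+1})\spcheck
\otimes_{{\cal O}_Y}
\bigl({\cal O}_Y\otimes_{i^{-1}{\cal O}_X} i^{-1}{\cal D}_X\bigr).
\]
The second tensor factor on the right is the transfer bimodule $i^*{\cal D}_X={\cal D}_{Y\to X}$, which as a left ${\cal D}_Y$-module is free of \emph{infinite} rank (a basis is given by arbitrary normal derivatives). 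The left-hand side, by contrast, is free over ${\cal D}_Y$ of \emph{finite} rank $\dim S^p(\frak{g}/(\frak{h}+\frak{k}))$: locally in coordinates adapted to $i$, a generator of $F_p{\cal D}_X/F_{p-1}{\cal D}_X$ is $\eta_1\cdots\eta_r\,\xi_1\cdots\xi_p$ with $\eta_i$ tangent and $\xi_j$ normal, and the $\eta$'s are exactly what ${\cal D}_Y$ produces. So the correct identification is
\[
{\cal O}_Y\otimes_{i^{-1}{\cal O}_X} i^{-1}(F_p{\cal D}_X/F_{p-1}{\cal D}_X)
\;\simeq\;
{\cal D}_Y\otimes_{{\cal O}_Y}
i^{-1}({\cal I}_Y^p/{\cal I}_Y^{p+1})\spcheck,
\]
which is precisely what the paper's isomorphism \eqref{eqn:dfil2}, combined with \eqref{eqn:dfil}, produces. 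The error propagates: in your final step you invoke a ``standard identity''
$({\cal N}\otimes\Omega_Y)\otimes_{{\cal D}_Y}({\cal O}_Y\otimes i^{-1}{\cal D}_X)\otimes i^{-1}\Omega_X\spcheck \simeq {\cal N}\otimes_{{\cal O}_Y}\Omega_{X/Y}\spcheck$, but the left side is nothing other than $i^{-1}i_+{\cal N}$, the full $D$-module direct image; it is not a line-bundle twist of ${\cal N}$. Once you replace $i^*{\cal D}_X$ by ${\cal D}_Y$ in the intermediate step, the collapse becomes the honest identity $({\cal M}\otimes\Omega_Y)\otimes_{{\cal D}_Y}{\cal D}_Y\simeq{\cal M}\otimes\Omega_Y$, and the argument closes as in the text. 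The $\frak{k}$-equivariance discussion you sketch (using that $\eta_X$ is tangent to $Y$ for $\eta\in\frak{k}$, so one only needs $\gamma([\eta_X,D])=\eta\cdot\gamma(D)$) is correct and matches the paper's verification.
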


\begin{proof}
The inverse image
 $i^*{\cal D}_X:={\cal O}_Y\otimes_{i^{-1}{\cal O}_X}i^{-1}{\cal D}_X$
 of ${\cal D}_X$ in the category of ${\cal O}$-modules has a
 left ${\cal D}_Y$-module structure.
The action map
\[{\cal D}_Y \otimes_{{\cal O}_Y} 
 ({\cal O}_Y \otimes_{i^{-1}{\cal O}_X} i^{-1}{\cal D}_X)
\to {\cal O}_Y \otimes_{i^{-1}{\cal O}_X} i^{-1}{\cal D}_X
\]
induces a morphism of left ${\cal D}_Y$-modules
\begin{align}
\label{eqn:dfil2}
&{\cal D}_Y\otimes_{{\cal O}_Y}({\cal O}_Y\otimes_{i^{-1}{\cal O}_X} 
 i^{-1}(G_p{\cal D}_X/
 (G_p{\cal D}_X \cap F_{p-1}{\cal D}_X))) \\ \nonumber
\to\  &{\cal O}_Y\otimes_{i^{-1}{\cal O}_X}
 i^{-1}(F_p{\cal D}_X/F_{p-1}{\cal D}_X).
\end{align}
We give the inverse map of \eqref{eqn:dfil2}.
Any section of $F_p{\cal D}_X/ F_{p-1}{\cal D}_X$
 is represented by a sum of section of the form
 $\eta_1\cdots\eta_r\xi_1\cdots\xi_p$
 for $\xi_1,\dots,\xi_p \in {\cal T}_X$ and 
 $\eta_1,\dots,\eta_r\in {\cal T}_{X/Y}$.
The inverse map
\begin{align*}
&{\cal O}_Y\otimes_{i^{-1}{\cal O}_X}
 i^{-1}(F_p{\cal D}_X/F_{p-1}{\cal D}_X) \\
\to\  &{\cal D}_Y\otimes_{{\cal O}_Y}({\cal O}_Y\otimes_{i^{-1}{\cal O}_X} 
 i^{-1}(G_p{\cal D}_X/
 (G_p{\cal D}_X \cap F_{p-1}{\cal D}_X)))
\end{align*}
is given by
\begin{align*}
f\otimes \eta_1\cdots\eta_r\xi_1\cdots\xi_p
\mapsto f(\eta_1)|_Y\cdots(\eta_r)|_Y\otimes
 (1\otimes\xi_1\cdots\xi_p).
\end{align*}
Hence \eqref{eqn:dfil2} is an isomorphism.

By using \eqref{eqn:dfil} and \eqref{eqn:dfil2}, 
we obtain isomorphisms of ${\cal O}_Y$-modules: 
\begin{align}
\label{eqn:filisom}
&F_p i^{-1}i_+{\cal M} / F_{p-1} i^{-1}i_+{\cal M}\\ \nonumber
\simeq \ 
&({\cal M}\otimes_{{\cal O}_Y} \Omega_Y)
 \otimes_{{\cal D}_Y}
({\cal O}_Y\otimes_{i^{-1}{\cal O}_X}
 i^{-1}(F_p{\cal D}_X / F_{p-1} {\cal D}_X))
 \otimes_{i^{-1}{\cal O}_X} i^{-1}\Omega_X \spcheck \\ \nonumber
\simeq \ 
&({\cal M}\otimes_{{\cal O}_Y} \Omega_Y)
  \!\otimes_{{\cal D}_Y} \!
 ({\cal D}_Y \!\otimes_{{\cal O}_Y} \!
 ({\cal O}_Y  \!\otimes_{i^{-1}{\cal O}_X} \! 
 i^{-1}(G_p{\cal D}_X/
 (G_p{\cal D}_X \cap F_{p-1}{\cal D}_X)))) \\ \nonumber
&\hspace*{250pt}
 \otimes_{i^{-1}{\cal O}_X} i^{-1}\Omega_X \spcheck \\ \nonumber
\simeq \ 
&({\cal M}\otimes_{{\cal O}_Y} \Omega_Y)
 \otimes_{{\cal O}_Y}
 i^{-1}(G_p{\cal D}_X /
 (G_p{\cal D}_X\cap F_{p-1} {\cal D}_X))
 \otimes_{i^{-1}{\cal O}_X} i^{-1}\Omega_X \spcheck \\ \nonumber
\simeq \ 
& {\cal M}\otimes_{{\cal O}_Y} 
 \Omega_{X/Y} \spcheck
 \otimes_{{\cal O}_Y}
 i^{-1}({\cal I}_Y^p/{\cal I}_Y^{p+1})\spcheck.
\end{align}

We now show that this map commutes with the $\frak{k}$-actions.
Take a section
\[(m\otimes \omega)
 \otimes (1\otimes D)\otimes \omega'
 \in ({\cal M}\otimes_{\cal O} \Omega_Y) \otimes_{\cal D}
 ({\cal O}_Y\otimes_{i^{-1}{\cal O}}i^{-1}F_p{\cal D}_X)
 \otimes_{i^{-1}{\cal O}} i^{-1}\Omega_X \spcheck
\]
for $m\in {\cal M}$, $\omega\in\Omega_Y$,
 $D\in G_p{\cal D}_X$, and $\omega'\in \Omega_X \spcheck$.
Since any section of
 $F_pi^{-1}i_+{\cal M}/F_{p-1}i^{-1}i_+{\cal M}$
 is represented by a sum of 
 sections of this form, 
 it is enough to see the commutativity for this section.
Under the isomorphisms \eqref{eqn:filisom},
 the section $(m\otimes \omega) \otimes (1\otimes D)\otimes \omega'$ 
 corresponds to
 $m\otimes(\omega\otimes\omega')\otimes \gamma(D)
 \in{\cal M}\otimes_{\cal O} \Omega_{X/Y} \spcheck
 \otimes_{\cal O} i^{-1}({\cal I}^p/{\cal I}^{p+1})\spcheck$.
For $\eta\in\frak{k}$, the $\frak{k}$-action on
 $i^{-1}i_+{\cal M}$ is given by 
\begin{align*}
&(m\otimes \omega)
 \otimes (1\otimes D)\otimes \omega'\\
\mapsto\ &(m\otimes \omega)
 \otimes (1\otimes D(-\eta_X))\otimes \omega'
+(m\otimes \omega)
 \otimes (1\otimes D)\otimes \eta_X \omega' \\
=\ &(m\otimes \omega)
 \otimes (1\otimes (-\eta_X)D)\otimes \omega'
+(m\otimes \omega)
 \otimes (1\otimes [\eta_X, D])\otimes \omega'\\
&\qquad\qquad\qquad\qquad\qquad\qquad\quad
 +(m\otimes \omega)
 \otimes (1\otimes D)\otimes \eta_X \omega'.
\end{align*}
Since $\eta_X|_Y=\eta_Y$, it follows that
\begin{align*}
(m&\otimes \omega)
 \otimes (1\otimes (-\eta_X)D)\otimes \omega'
=(m\otimes \omega)(-\eta_Y)
 \otimes (1\otimes D)\otimes \omega' \\
&=(\eta_Y m\otimes \omega)
 \otimes (1\otimes D)\otimes \omega'
 +(m\otimes \omega(-\eta_Y))
 \otimes (1\otimes D)\otimes \omega'.
\end{align*}
As a result, the action of $\eta$ is given by
\begin{align*}
&\eta\cdot((m\otimes \omega)
 \otimes (1\otimes D)\otimes \omega')\\
=\ &(\eta_Y m\otimes \omega)
 \otimes (1\otimes D)\otimes \omega'
 +(m\otimes \omega(-\eta_Y))
 \otimes (1\otimes D)\otimes \omega'\\
+\ &(m\otimes \omega)
 \otimes (1\otimes [\eta_X, D])\otimes \omega'
 +(m\otimes \omega)
 \otimes (1\otimes D)\otimes \eta_X \omega'.
\end{align*}
Since
 $[\eta_X, D]\in G_p{\cal D}_X$,
 the section $\eta\cdot((m\otimes \omega)
 \otimes (1\otimes D)\otimes \omega')$
 corresponds to 
\begin{align*}
\eta_Y m\otimes (\omega\otimes\omega')\otimes \gamma(D)
 &+m\otimes \eta_Y(\omega\otimes\omega')\otimes \gamma(D)\\
 &+m\otimes (\omega\otimes\omega')
    \otimes \gamma([\eta_X, D]).
\end{align*}
Thus, the commutativity follows from
 $\gamma([\eta_X, D])=\eta\cdot\gamma(D)$.
\end{proof}

In the rest of this section,
 we assume that $K$ and $H\cap K$ are 
 complex reductive linear algebraic groups. 
In particular, $Y:=K/(H\cap K)$ is an affine variety 
 by \cite[\S I.2]{Mum}.

We assume moreover that there exists a $K$-equivariant isomorphism of
 ${\cal O}_Y$-modules: $\Omega_Y\simeq{\cal O}_Y$, or equivalently, 
 the $(H\cap K)$-module $\bigwedge^{\rm top}(\frak{h}/(\frak{h}\cap\frak{k}))$
 with the adjoint action is trivial.
This assumption automatically holds if $H\cap K$ is connected.

Let $V$ be an $H$-module. 
Then $V$ is written as a union of finite-dimensional $H$-submodules 
 and has a structure of $(\frak{h}, H\cap K)$-module.
Define the $(\frak{g},K)$-module
 $R(\frak{g},K)\otimes_{R(\frak{h}, H\cap K)} V$
 as in Section~\ref{sec:coh}.

Let ${\cal V}_X$ be the ${\cal O}_X$-module associated with
 the $H$-module $V$.
Then the $G$-equivariant structures of ${\cal V}_X$ and ${\Omega}_X$
 induce $(\frak{g},K)$-actions on them.

The next lemma relates these two modules.

\begin{lem}
\label{affloc}
Under the assumptions above, 
there is an isomorphism of $(\frak{g},K)$-modules
\[
R(\frak{g},K)\otimes_{R(\frak{h},H\cap K)} V
\xrightarrow{\sim} 
\Gamma(X, i_+{\cal O}_Y \otimes_{{\cal O}_X} \Omega_X 
 \otimes_{{\cal O}_X} {\cal V}_X),
\] 
where the actions of $\frak{g}$ and $K$ on the right side are given by
 the tensor product of three factors.
\end{lem}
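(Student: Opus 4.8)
The plan is to reduce the right-hand side to the affine variety $Y$, to construct the comparison map from the universal property of the produced module, and to prove that it is an isomorphism by comparing associated graded modules with the help of Lemmas~\ref{indred} and \ref{fil}. Write $M:=H\cap K$. Reducing to the case $\dim V<\infty$ and passing to the colimit over finite-dimensional $H$-submodules, we may assume that ${\cal V}_X$ is locally free.

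\emph{Step 1 (reduction to $Y$).} In the definition $i_+{\cal O}_Y=i_*(\Omega_Y\otimes_{{\cal D}_Y}i^*{\cal D}_X)\otimes_{{\cal O}_X}\Omega_X\spcheck$ the factors $\Omega_X\spcheck$ and $\Omega_X$ cancel, so $i_+{\cal O}_Y\otimes_{{\cal O}_X}\Omega_X=i_*(\Omega_Y\otimes_{{\cal D}_Y}i^*{\cal D}_X)$; tensoring with the locally free ${\cal V}_X$ and using the projection formula gives
\[
i_+{\cal O}_Y\otimes_{{\cal O}_X}\Omega_X\otimes_{{\cal O}_X}{\cal V}_X
\;\simeq\;i_*\bigl((\Omega_Y\otimes_{{\cal D}_Y}i^*{\cal D}_X)\otimes_{i^{-1}{\cal O}_X}i^{-1}{\cal V}_X\bigr).
\]
Since $\Gamma(X,i_*(-))=\Gamma(Y,-)$, the right-hand side of the lemma equals $\Gamma(Y,{\cal F})$, where ${\cal F}:=(\Omega_Y\otimes_{{\cal D}_Y}i^*{\cal D}_X)\otimes_{i^{-1}{\cal O}_X}i^{-1}{\cal V}_X$, and $\Gamma(Y,-)$ is exact because $Y$ is affine.

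\emph{Step 2 (filtrations and graded modules).} Put on ${\cal F}$ the filtration $F_p$ by normal degree of Lemma~\ref{fil} (with ${\cal M}={\cal O}_Y$); by affineness of $Y$ it stays strict after applying $\Gamma(Y,-)$. Using Lemma~\ref{fil}, the cancellation $\Omega_{X/Y}\spcheck\otimes_{i^{-1}{\cal O}_X}i^{-1}\Omega_X\simeq\Omega_Y$, and the fixed $K$-equivariant trivialization $\Omega_Y\simeq{\cal O}_Y$, one finds $F_0{\cal F}\simeq i^*{\cal V}_X$, the ${\cal O}_Y$-module ${\cal W}$ associated with $V|_M$, and for every $p$
\[
\operatorname{gr}_p{\cal F}\;\simeq\;{\cal W}\otimes_{{\cal O}_Y}i^{-1}({\cal I}_Y^p/{\cal I}_Y^{p+1})\spcheck,
\]
which is the ${\cal O}_Y$-module associated with the $M$-module $S^p(\frak g/(\frak h+\frak k))\otimes V$. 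Hence by Lemma~\ref{indred}
\[
\operatorname{gr}_p\Gamma(Y,{\cal F})\;\simeq\;R(K)\otimes_{R(M)}\!\bigl(S^p(\frak g/(\frak h+\frak k))\otimes V\bigr),\qquad
F_0\Gamma(Y,{\cal F})\;\simeq\;R(K)\otimes_{R(M)}V.
\]
On the other side, the natural filtration of $R(\frak g,K)=R(K)\otimes_{U(\frak k)}U(\frak g)$ (for which $\operatorname{gr}R(\frak g,K)\simeq R(K)\otimes S(\frak g/\frak k)$ as a left $R(K)$-module) induces a $K$-stable filtration $F_p$ of $R(\frak g,K)\otimes_{R(\frak h,M)}V$ with $\frak g\cdot F_p\subset F_{p+1}$ and $F_0\simeq R(K)\otimes_{R(M)}V$; a Poincar\'e--Birkhoff--Witt computation, in which the $U(\frak k)$-factor is absorbed into $R(K)$ and then $\frak h$ is absorbed in the direction $(\frak h+\frak k)/\frak k$, gives $\operatorname{gr}_p\simeq R(K)\otimes_{R(M)}(S^p(\frak g/(\frak h+\frak k))\otimes V)$ as well.

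\emph{Step 3 (the map and conclusion).} Since $P^{\frak g,K}_{\frak h,M}$ is left adjoint to the restriction functor ${\cal C}(\frak g,K)\to{\cal C}(\frak h,M)$, a $(\frak g,K)$-homomorphism $\Phi\colon R(\frak g,K)\otimes_{R(\frak h,M)}V\to\Gamma(Y,{\cal F})$ is the same datum as an $(\frak h,M)$-homomorphism $V\to\Gamma(Y,{\cal F})$; I take the latter to be the one determined by the canonical order-zero section $\omega\mapsto\omega\otimes(1\otimes1)$ of $\Omega_Y\otimes_{{\cal D}_Y}i^*{\cal D}_X$ together with the identifications of Step~2, so that $\Phi$ is filtered and restricts on $F_0$ to the identity of $R(K)\otimes_{R(M)}V$. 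Because $G$ acts transitively on $X$, the vector fields $\xi_X$ ($\xi\in\frak g$) span the normal bundle $i^{-1}({\cal T}_X/{\cal T}_{X/Y})$ fibrewise, so $\operatorname{gr}_p\Gamma(Y,{\cal F})$ is generated over $R(K)$ by $\operatorname{gr}_0$ under $U_p(\frak g)$; hence $\operatorname{gr}\Phi$ is surjective, and as both graded pieces are $\operatorname{Ind}_M^K$ of the same $M$-module and so have equal, finite $K$-multiplicities, $\operatorname{gr}_p\Phi$ is bijective for every $p$. Since both filtrations are exhaustive and bounded below, $\Phi$ is an isomorphism of $(\frak g,K)$-modules.

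\emph{Main obstacle.} The delicate part is the assertion in Step~3 that $\Phi$ is genuinely $\frak g$- and $K$-equivariant and strictly filtered, and that under the identifications of Step~2 the map $\operatorname{gr}_p\Phi$ is the $p$-th principal-symbol map $S^p(\frak g/(\frak h+\frak k))\otimes V\to S^p(i^{-1}({\cal T}_X/{\cal T}_{X/Y}))\otimes V$; this is precisely what the description of $\gamma$ in \eqref{eqn:dfil} and the computation in Lemma~\ref{fil} were prepared for. One must also promote the $\frak k$-equivariance recorded in Lemma~\ref{fil} to $K$-equivariance, which is routine since all the sheaves in sight are $K$-equivariant.
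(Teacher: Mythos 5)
Your plan diverges from the paper's route: the paper identifies $\Gamma(Y,{\cal F})$ with $R(K)\otimes_{R(H\cap K)}W$ for the geometric fiber $W\simeq U(\frak g)\otimes_{U(\frak h)}V$ at the base point, writes out the actions $\rho$, $\nu$, $\omega=\nu-\rho$ explicitly on $R(K)\otimes_{R(H\cap K)}W$, and then shows directly that dividing by $\omega(\frak k)$ yields $R(\frak g,K)\otimes_{R(\frak g,H\cap K)}W\simeq R(\frak g,K)\otimes_{R(\frak h,H\cap K)}V$. You instead propose to compare the two sides through their normal-degree filtrations and their PBW filtrations respectively, and to deduce the isomorphism from equality of associated graded pieces via a filtered map $\Phi$. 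That change of strategy is in principle sensible, but it puts all the weight on exhibiting $\Phi$, and that is where the gap lies.

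\textbf{The gap is in Step~3.} To construct $\Phi$ by adjunction you need an $(\frak h,H\cap K)$-equivariant map $\phi\colon V\to\Gamma(Y,{\cal F})$; the $(\frak g,K)$-map $\Phi$ is then the one with $\Phi\circ\eta_V=\phi$. The ``canonical order-zero section'' $\omega\mapsto\omega\otimes(1\otimes 1)$ only produces a morphism of $K$-equivariant ${\cal O}_Y$-modules ${\cal W}\to F_0{\cal F}$, hence a $K$-map $R(K)\otimes_{R(H\cap K)}V\to F_0\Gamma(Y,{\cal F})$, not an $(\frak h,H\cap K)$-map out of $V$. It cannot be $\frak h$-equivariant as a map into $F_0$: the vector fields $\xi_X$ for $\xi\in\frak h\setminus\frak k$ are not tangent to $Y$, so $\frak h$ does not preserve the normal-degree filtration, and $F_0$ is not an $\frak h$-submodule of $\Gamma(Y,{\cal F})$. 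Equivalently, if one tries to define $\Phi$ directly on elementary tensors $S\otimes\xi\otimes v$ of $R(\frak g,K)\otimes_{R(\frak h,H\cap K)}V$, one must check that the $R(\frak h,H\cap K)$-balancing relations are killed; that check is exactly the computation with $\rho$, $\nu$, $\omega$ and the Lie-algebra identity \eqref{lrch} that the paper carries out, and it is the genuine content of the lemma. Once a bona fide $(\frak g,K)$-map $\Phi$ is in hand, your graded comparison in Step~2 (with the caveats you already flag: strictness of the filtration after $\Gamma(Y,-)$ on the affine $Y$, and exhaustiveness of both filtrations) would be a viable way to conclude. As written, though, the proposal assumes the existence of the comparison map rather than establishing it.
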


\begin{proof}
With the identification $\Omega_Y\simeq {\cal O}_Y$, 
we have 
\[
i_+{\cal O}_Y \otimes_{{\cal O}_X} \Omega_X
\simeq i_*({\cal O}_Y \otimes_{{\cal D}_Y} i^*{\cal D}_X). 
\]
Hence
\[ i^{-1}(i_+{\cal O}_Y \otimes_{{\cal O}_X} \Omega_X 
 \otimes_{{\cal O}_X} {\cal V}_X)
\simeq {\cal O}_Y\otimes_{{\cal D}_Y} 
(i^*{\cal D}_X \otimes_{i^{-1}{\cal O}_X} i^{-1}{\cal V}_X).\]
Using the right $(i^{-1}{\cal D}_X)$-module structure of
 $i^*{\cal D}_X$, we define a $\frak{g}$-action $\rho$ on
 the sheaf $i^*{\cal D}_X \otimes_{i^{-1}{\cal O}_X} i^{-1}{\cal V}_X$ by 
\[
\rho(\xi)(D\otimes v):= D(-\xi_X)\otimes v+D\otimes \xi v
\]
for $\xi\in\frak{g}$, $D\in i^*{\cal D}_X$, and
 $v\in {\cal V}_X$.
Moreover,
 the sheaf $i^*{\cal D}_X \otimes_{i^{-1}{\cal O}_X} i^{-1}{\cal V}_X$
 is $K$-equivariant.
We denote this $K$-action and also its infinitesimal
 $\frak{k}$-action by $\nu$.
Using the $({\cal D}_Y, i^{-1}{\cal D}_X)$-bimodule structure on
 $i^*{\cal D}_X$, the $\frak{k}$-action $\nu$ is given by
\[
\nu(\eta)(D\otimes v)= \eta_Y D \otimes v - D\eta_X \otimes v
+D\otimes \eta v
\]
for $\eta\in\frak{k}$.
Then $\Gamma(Y, i^*{\cal D}_X \otimes_{i^{-1}{\cal O}_X} i^{-1}{\cal V}_X)$
 is a weak Harish-Chandra module in the sense of \cite{MiPa}, namely,
\begin{align}
\label{weakHC}
\nu(k)\rho(\xi)\nu(k^{-1})=\rho(\Ad (k)\xi)
\end{align}
 for $k\in K$ and $\xi\in\frak{g}$.
Put $\omega(\eta):=\nu(\eta)-\rho(\eta)$
 for $\eta\in\frak{k}$.
Then $\omega(\eta)$ is given by
\[
\omega(\eta)(D\otimes v)= \eta_Y D\otimes v.
\]
Since $Y$ is an affine variety, 
 $\Gamma(Y, {\cal D}_Y)$ is generated by $U(\frak{k})$
 as an ${\cal O}(Y)$-algebra.
Therefore,
\begin{align*}
&\Gamma(X, i_+{\cal O}_Y\otimes_{{\cal O}_X} 
 \Omega_X \otimes_{{\cal O}_X} {\cal V}_X) \\
\simeq\ &{\cal O}(Y)\otimes_{\Gamma(Y, {\cal D}_Y)}
 \Gamma(Y, i^*{\cal D}_X \otimes_{i^{-1}{\cal O}_X} i^{-1}{\cal V}_X)\\
\simeq\ &\Gamma(Y, i^*{\cal D}_X \otimes_{i^{-1}{\cal O}_X}
  i^{-1}{\cal V}_X)
 \,/\,\omega(\frak{k}) \Gamma(Y, i^*{\cal D}_X \otimes_{i^{-1}{\cal O}_X} 
 i^{-1}{\cal V}_X).
\end{align*}

Let $e\in K$ be the identity element. 
Write $o:=e(H\cap K)\in Y$ for the base point
 and $i_{o,Y}:\{o\}\to Y$ for the immersion.
Let ${\cal I}_o$ be the maximal ideal of ${\cal O}_Y$
 corresponding to $o$.
The geometric fiber of
 $i^*{\cal D}_X \otimes_{i^{-1}{\cal O}_X} i^{-1}{\cal V}_X$
 at $o$ is given by
\begin{align*}
W:=\ &(i_{o,Y})^*(i^*{\cal D}_X \otimes_{i^{-1}{\cal O}_X} i^{-1}{\cal V}_X)\\
 \simeq\ &\Gamma(Y, i^*{\cal D}_X \otimes_{i^{-1}{\cal O}_X} i^{-1}{\cal V}_X)
 \,/\, {\cal I}_o(Y)
 \Gamma(Y, i^*{\cal D}_X \otimes_{i^{-1}{\cal O}_X} i^{-1}{\cal V}_X).
\end{align*}
The actions $\rho$ and $\nu$ on
 $i^*{\cal D}_X \otimes_{i^{-1}{\cal O}_X} i^{-1}{\cal V}_X$
 induce a $\frak{g}$-action $\rho_o$ and an
 $(H\cap K)$-action $\nu_o$ on $W$.
With these actions, $W$ becomes a $(\frak{g},H\cap K)$-module.
To show this, it is enough to see that $\rho_o$ and $\nu_o$
 agree on $\frak{h}\cap \frak{k}$.
This follows from
\[\omega(\eta)
 \Gamma(Y, i^*{\cal D}_X \otimes_{i^{-1}{\cal O}_X} i^{-1}{\cal V}_X)
 \subset {\cal I}_o(Y)
 \Gamma(Y, i^*{\cal D}_X \otimes_{i^{-1}{\cal O}_X} i^{-1}{\cal V}_X)\]
for $\eta\in\frak{h}\cap \frak{k}$.

We claim that $W\simeq U(\frak{g})\otimes_{U(\frak{h})} V$ as
 a $(\frak{g}, H\cap K)$-module.
Put $i_{o,X}:=i\circ i_{o,Y}$.
Then 
\begin{align*}
W &\simeq (i_{o,X})^*{\cal D}_X \otimes_{(i_{o,X})^{-1}{\cal O}_X}
 (i_{o,X})^{-1}{\cal V}_X \\
&\simeq (i_{o,X})^{-1}((i_{o,X})_+
 {\cal O}_{\{o\}}\otimes_{{\cal O}_X} \Omega_X)
  \otimes_{(i_{o,X})^{-1}{\cal O}_X}
 (i_{o,X})^{-1}{\cal V}_X.
\end{align*}
Let $\{F_p {\cal D}_X\}$ be the filtration by
 normal degree with respect to $i_{o,X}$.
Define the filtration
\[
F_p W:= (i_{o,X})^*F_p{\cal D}_X \otimes_{(i_{o,X})^{-1}{\cal O}_X}
 (i_{o,X})^{-1}{\cal V}_X
\]
 of $W$.
Then $F_p W$ is $(\frak{h},H\cap K)$-stable and
 there is an isomorphism of $(\frak{h},H\cap K)$-modules
\[F_p W/F_{p-1} W \simeq
 (i_{o,X})^{-1}({\cal I}_{o}^p/{\cal I}_{o}^{p+1})\spcheck 
 \otimes V
\]
 by Lemma~\ref{fil}.
The isomorphism $F_0 W\simeq V$ induces
 a $(\frak{g},H\cap K)$-homomorphism
 $\varphi: U(\frak{g})\otimes_{U(\frak{h})} V \to W$.
Let $U_p(\frak{g})$ be the standard filtration of $U(\frak{g})$.
Then 
$(U_p(\frak{g})U(\frak{h}))\otimes_{U(\frak{h})}V$
 is a filtration of the $(\frak{h},H\cap K)$-module
 $U(\frak{g})\otimes_{U(\frak{h})} V$ 
and there is an isomorphism of $(\frak{h},H\cap K)$-modules:
\[
(U_p(\frak{g})U(\frak{h}))\otimes_{U(\frak{h})}V \,/\,
(U_{p-1}(\frak{g})U(\frak{h}))\otimes_{U(\frak{h})}V
\simeq S^p(\frak{g}/\frak{h})\otimes V.
\]
In view of the proof of Lemma~\ref{fil},
 we see that the map on the successive quotient 
\[
\varphi_p: (U_p(\frak{g})U(\frak{h}))\otimes_{U(\frak{h})}V \,/\,
(U_{p-1}(\frak{g})U(\frak{h}))\otimes_{U(\frak{h})}V
\to F_p W/F_{p-1} W
\]
 induced by $\varphi$ is an isomorphism.
Hence $\varphi$ is an isomorphism.

As a $K$-equivariant ${\cal O}_Y$-module,
 $i^*{\cal D}_Y\otimes_{i^{-1}{\cal O}_X} i^{-1}{\cal V}_X$ is isomorphic to
 the ${\cal O}_Y$-module ${\cal W}_Y$ associated with 
 the $(H\cap K)$-module $W$.
Hence we can see global sections 
 $\Gamma(Y, i^*{\cal D}_Y\otimes_{i^{-1}{\cal O}_X} i^{-1}{\cal V}_X)$
 as $W$-valued regular functions on $K$.
Let $f$ be a $W$-valued regular function on $K$ such that
 $f(kh)=\nu_o(h^{-1}) f(k)$ for $k\in K$ and $h\in H\cap K$.
The $\frak{g}$-action $\rho$ at $e$ is given by
 $(\rho(\xi)f)(e)=\rho_o(\xi)(f(e))$.
Hence \eqref{weakHC} implies that
\begin{align*}
(\rho(\xi)f)(k)= (\nu(k)\rho(\Ad(k^{-1}) \xi)\nu(k^{-1})f)(k)
=\rho_o(\Ad(k^{-1})\xi)(f(k)).
\end{align*}
Let $\xi_1,\dots,\xi_n$ be a basis of $\frak{g}$
 and write $\xi^1,\dots,\xi^n\in\frak{g}^*$ for its dual basis.
Under the isomorphism
 $\Gamma(Y, {\cal W}_Y)\simeq R(K)\otimes_{R(H\cap K)} W$
 given in Lemma~\ref{indred}, 
the $\frak{g}$-action $\rho$ on $R(K)\otimes_{R(H\cap K)} W$ is given by
\begin{align}
\label{rhoact}
\rho(\xi)(S\otimes w)=
 \sum_{i=1}^n \langle \xi^i, \Ad(\cdot)^{-1}\xi\rangle S\otimes
 \rho_o(\xi_i) w
\end{align}
for $S\in R(K)$ and $w\in W$.
If we define $\rho$ on $R(K)\otimes_\bb{C} W$ by this equation, 
 then $\rho$ commutes with the canonical surjective map
\[p:R(K)\otimes_\bb{C} W \to R(K) \otimes_{R(H\cap K)} W.\]
The $K$-action $\nu$ is given by the left translation of $R(K)$:
\[
\nu(k)(S\otimes w)= (kS)\otimes w.
\]
Hence $\nu$ also lifts to the action on $R(K)\otimes_\bb{C} W$
and commutes with $p$.
Let $\eta_1,\cdots,\eta_m$ be a basis of $\frak{k}$ and
 write $\eta^1,\cdots,\eta^m\in\frak{k}^*$ for its dual basis.
Define the regular functions $\alpha^i_j$ and $\beta^j_i$
 on $K$ with respect to $\eta_i$ as in \eqref{lrdef}.
Then the $\frak{k}$-action $\omega$ is given by
\begin{align*}
\omega(\eta_j)(S\otimes w)&=
\nu(\eta_j)(S\otimes w)-\rho(\eta_j)(S\otimes w)\\
&=((\eta_j)_K^L S)\otimes w
-\sum_{i=1}^m \alpha^i_j S\otimes \rho_o(\eta_i) w.
\end{align*}
Here, we identify $R(K)$ with ${\cal O}(K)$, and
 give actions of differential operators on $K$.

We have 
\begin{align*}
&\Gamma(X, i_+{\cal O}_Y \otimes_{{\cal O}_X} \Omega_X
 \otimes_{{\cal O}_X} {\cal V}_X)\\
\simeq\ &\Gamma(Y, i^*{\cal D}_X \otimes_{i^{-1}{\cal O}_X}
 i^{-1}{\cal V}_X)
 /\omega(\frak{k})\Gamma(Y, i^*{\cal D}_X \otimes_{i^{-1}{\cal O}_X}
 i^{-1}{\cal V}_X)\\
\simeq\ &(R(K)\otimes_{R(H\cap K)} W)
 /\omega(\frak{k}) (R(K)\otimes_{R(H\cap K)} W).
\end{align*}
We note that the $\frak{k}$-actions $\rho$ and $\nu$ 
 agree on the quotient
$(R(K)\otimes_{R(H\cap K)} W)
 /\omega(\frak{k}) (R(K)\otimes_{R(H\cap K)} W)$
 and hence it becomes a $(\frak{g},K)$-module.

The equation
 $\sum_{j=1}^m \alpha^i_j\beta^j_k=\delta^i_k$ implies that
 $\omega(\frak{k}) (R(K)\otimes_{\bb{C}} W)$
 is generated by the elements of the form
$\sum_{j=1}^m \omega(\eta_j)(\beta^j_k S\otimes w)$
 for $S\in R(K)$ and $w\in W$.
We observe from \eqref{lrch} that 
$
\sum_{j=1}^m (\eta_j)_K^L(\beta^j_k)=0
$
because ${\rm Trace} \ad(\cdot) =0$ for the reductive Lie algebra $\frak{k}$.
Therefore, 
\[
(\eta_k)_K^R
=-\sum_{j=1}^m \beta^j_k(\eta_j)_K^L 
=-\sum_{j=1}^m (\eta_j)_K^L \beta^j_k
\]
as differential operators on $K$.
Then
\begin{align*}
\sum_{j=1}^m \omega(\eta_j)(\beta^j_k S\otimes w)
&= \sum_{j=1}^m (\eta_j)_K^L \beta^j_k S\otimes w
+ \sum_{i,j=1}^m (\alpha^i_j\beta^j_k  S\otimes \rho_o(\eta_i)w) \\
&= -(\eta_k)_K^R S\otimes w
+ S\otimes \rho_o(\eta_k)w.
\end{align*}

Consequently, 
 the kernel of the map
\begin{align*}
R(K)\otimes_\bb{C} W\to (R(K)\otimes_{R(H\cap K)} W)
/\omega(\frak{k}) (R(K)\otimes_{R(H\cap K)} W)
\end{align*}
 is generated by ${\rm Ker}\ p$ and 
 $-(\eta_k)_K^R S\otimes w
 + S\otimes \rho_o(\eta_k)w$.
Hence 
\begin{align*}
&(R(K)\otimes_{R(H\cap K)} W)
/\omega(\frak{k}) (R(K)\otimes_{R(H\cap K)} W) \\
\simeq\ &R(K)\otimes_{R(\frak{k}, H\cap K)} W \\
\simeq\ &R(\frak{g},K)\otimes_{R(\frak{g},H\cap K)} W.
\end{align*}
From \eqref{rhoact}, we see that the isomorphism
\[
(R(K)\otimes_{R(H\cap K)} W)
/\omega(\frak{k}) (R(K)\otimes_{R(H\cap K)} W)
\simeq R(\frak{g},K)\otimes_{R(\frak{g},H\cap K)} W
\]
 commutes with the $(\frak{g},K)$-actions.
Therefore, 
\begin{align*}
\Gamma(X, i_+{\cal O}_Y\otimes_{{\cal O}_X} \Omega_X
 \otimes_{{\cal O}_X} {\cal V}_X)
&\simeq R(\frak{g},K)\otimes_{R(\frak{g},H\cap K)} W\\
&\simeq R(\frak{g},K)\otimes_{R(\frak{h},H\cap K)} V
\end{align*}
and the lemma is proved.
\end{proof}

%%%%%%%%%%%%%%%%%%%%%%%%%%%%%%%%%%%%%%%%%%%%%%%%%
%%%%%%%%%%%%%%%%%%%%%%%%%%%%%%%%%%%%%%%%%%%%%%%%%
%%%%%%%%%%%%%%%%%%%%%%%%%%%%%%%%%%%%%%%%%%%%%%%%%
%%%%%%%%%%%%%%%%%%%%%%%%%%%%%%%%%%%%%%%%%%%%%%%%%
%%%%%%%%%%%%%%%%%%%%%%%%%%%%%%%%%%%%%%%%%%%%%%%%%
%%%%%%%%%%%%%%%%%%%%%%%%%%%%%%%%%%%%%%%%%%%%%%%%%

\section{Localization of the Cohomological Induction}
\label{sec:loc}

In this section, we construct cohomologically
 induced modules on flag varieties.
Let $G_0$ be a connected real linear reductive Lie group
 with Lie algebra $\frak{g}_0$
 and $\frak{q}$ a $\theta$-stable parabolic subalgebra as in
 Section \ref{sec:coh}.
We define the complexification $G$ of $G_0$
 as a complex reductive linear algebraic group.
Write $\ovl{Q}$ for the
 parabolic subgroup of $G$ with Lie algebra $\Bar{\frak{q}}$.

Suppose that $V$ is a $\ovl{Q}$-module and
 use the same letter $V$ for
 the underlying $(\Bar{\frak{q}},L\cap K)$-module.
In Section~\ref{sec:coh}, we define
 the cohomologically induced module
 \[(\Pi_{L\cap K}^{K})_s(U(\frak{g})\otimes_{U(\Bar{\frak{q}})}
 (V\otimes \bb{C}_{2\rho(\frak{u})})), \]
 where $s={\rm dim}(\frak{u}\cap \frak{k})$.

Let $X:=G/\ovl{Q}$ and $Y:=K/(\ovl{Q}\cap K)$,
 which are the partial flag varieties of $G$ and $K$, respectively.
The inclusion map $i : Y\to X$ is a closed immersion.
Let $i_+{\cal O}_Y$ be the push-forward of ${\cal O}_Y$ in
 the category of ${\cal D}$-modules.
We write ${\cal V}_X$ for
 the $G$-equivariant ${\cal O}_X$-module 
 associated with the $\ovl{Q}$-module $V$
 as in Section~\ref{sec:homog}.

The next theorem relates the cohomologically induced module and
 the ${\cal O}_X$-module $i_+{\cal O}_Y\otimes_{{\cal O}_X} {\cal V}_X$.
This theorem is similar to that in \cite{dual},
 but the formulations differ in the following three ways.
First, we assume that $\frak{q}$ is a $\theta$-stable parabolic
 subalgebra and hence $Y$ is a closed subvariety of the partial
 flag variety $X$, 
 while in \cite{dual}, $X$ is a complete flag variety and $Y$ is
 an arbitrary $K$-orbit. 
Second, we assume that $V$ is a $\ovl{Q}$-module and consider
 the ${\cal O}_X$-module
 $i_+{\cal O}_Y\otimes_{{\cal O}_X} {\cal V}_X$ with
 $(\frak{g},K)$-action.
On the other hand, $\frak{l}$ acts as scalars on $V$ 
 and the corresponding twisted ${\cal D}$-module was used in \cite{dual}.
Third, we adopt the functor 
 $P_{\Bar{\frak{q}},L\cap K}^{\frak{g},K}$
 for cohomologically induced modules 
 instead of 
 $I_{\frak{q},L\cap K}^{\frak{g},K}$.
As a result, the dual in the isomorphism in \cite{dual} does not appear 
 in Theorem~\ref{locZuc}.

\begin{thm}
\label{locZuc}
Let $V$ be a $\ovl{Q}$-module. 
Then there is an isomorphism 
\[
(\Pi_{L\cap K}^{K})_{s-i}(U(\frak{g})\otimes_{U(\Bar{\frak{q}})}
(V\otimes \bb{C}_{2\rho(\frak{u})}))\simeq
{\rm H}^{i}(X, i_+{\cal O}_Y\otimes_{{\cal O}_X} {\cal V}_X)
\]
of $(\frak{g},K)$-modules.
\end{thm}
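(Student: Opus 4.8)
The plan is to compute the cohomology $\mathrm{H}^i(X, i_+{\cal O}_Y\otimes_{{\cal O}_X}{\cal V}_X)$ by exploiting the fact that $i_+{\cal O}_Y$ is supported on the closed subvariety $Y$, so the computation localizes along $Y$. Since $Y$ is affine (being $K/(\ovl Q\cap K)$ with $K$ and $\ovl Q\cap K$ reductive), the higher cohomology should be governed by the Bernstein functor applied to a finitely filtered object. First I would apply Lemma~\ref{affloc} with $H=\ovl Q$: it identifies the global sections $\Gamma(X, i_+{\cal O}_Y\otimes_{{\cal O}_X}\Omega_X\otimes_{{\cal O}_X}{\cal V}_X)$ with $R(\frak g,K)\otimes_{R(\ovl{\frak q},L\cap K)}V$, but note that here we have $i_+{\cal O}_Y\otimes{\cal V}_X$ rather than $i_+{\cal O}_Y\otimes\Omega_X\otimes{\cal V}_X$, so I must account for the twist by $\Omega_X^{\spcheck}$, which as an $(\ovl{\frak q},L\cap K)$-module on the fiber at $o$ contributes exactly the factor $\bb C_{2\rho(\frak u)}$ (since $\bigwedge^{\mathrm{top}}(\frak g/\ovl{\frak q})$ corresponds to $\bb C_{2\rho(\frak u)}$, as recalled in Section~\ref{sec:coh}). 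So on global sections we get $R(\frak g,K)\otimes_{R(\ovl{\frak q},L\cap K)}(V\otimes\bb C_{2\rho(\frak u)})\simeq P_{\ovl{\frak q},L\cap K}^{\frak g,K}(V\otimes\bb C_{2\rho(\frak u)})$, which is $(\Pi_{L\cap K}^K)_0$ of the induced module.

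\textbf{From global sections to all cohomology.} Next I would realize the higher cohomology as the derived functors of the Bernstein functor. The key geometric point: pushforward under the closed immersion $i$ is exact and, for a $K$-orbit realization, $\mathrm{H}^i(X,i_+{\cal F})\simeq\mathrm{H}^i(Y,\text{restriction data})$, and since $Y$ is affine, the sheaf cohomology on $Y$ vanishes in positive degree. The nonvanishing higher cohomology must therefore come from the functor of $K$-finite vectors / the Bernstein functor, not from sheaf cohomology. Concretely, I expect to write $\mathrm{H}^i(X, i_+{\cal O}_Y\otimes{\cal V}_X) = (\Pi_{L\cap K}^K)_{s-i}$ applied to the appropriate $(\frak g, L\cap K)$-module, where the degree shift by $s=\dim(\frak u\cap\frak k)$ comes from the dimension of the fiber $Y$ over a point, equivalently from $\Omega_{X/Y}$ and the normal bundle data. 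The cleanest route is to use the filtration $F_p i^{-1}i_+{\cal O}_Y$ from Lemma~\ref{fil}: its associated graded is $\Omega_{X/Y}^{\spcheck}\otimes i^{-1}({\cal I}_Y^p/{\cal I}_Y^{p+1})^{\spcheck}$, i.e. built from symmetric powers of the conormal bundle, and this matches term-by-term with the standard filtration of $U(\frak g)\otimes_{U(\ovl{\frak q})}(V\otimes\bb C_{2\rho(\frak u)})$ restricted to $K$, which has associated graded $S(\frak g/\ovl{\frak q})$-type pieces. One then applies $(\Pi_{L\cap K}^K)_\bullet$ and compares.

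\textbf{Identifying the derived functor.} To pin down the degree shift and the vanishing, I would use the standard fact that $(\Pi_{L\cap K}^K)_j$ is the $j$-th left derived functor of the Bernstein functor $\Pi_{L\cap K}^K = P^{\frak g,K}_{\frak g, L\cap K}$, which vanishes for $j > s = \dim(\frak u\cap\frak k) = \dim Y$, and that on the complete-flag/$\cal D$-module side the Beilinson--Bernstein style argument of \cite{dual} shows the top cohomology $\mathrm{H}^s$ sees $(\Pi_{L\cap K}^K)_0$ while lower cohomology degrees $\mathrm{H}^i$ see $(\Pi_{L\cap K}^K)_{s-i}$. The precise mechanism: the complex computing $\mathrm{H}^\bullet(X, i_+{\cal O}_Y\otimes{\cal V}_X)$ is, after taking $K$-finite vectors and using affineness of $Y$, quasi-isomorphic to a Koszul-type complex whose cohomology is exactly the derived Bernstein functor of $U(\frak g)\otimes_{U(\ovl{\frak q})}(V\otimes\bb C_{2\rho(\frak u)})$, with the homological degree reversed relative to the cohomological degree by $s$.

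\textbf{Main obstacle.} The hard part will be the precise bookkeeping of the degree shift and, relatedly, making sure the $(\frak g, K)$-module structures match on the nose rather than just the underlying vector spaces. The geometric side naturally produces cohomology indexed by $i = 0,\dots, \operatorname{codim}$ or $0,\dots,\dim Y$ with various dualizations ($\Omega_X$, $\Omega_Y$, the $i_+$ construction all involve canonical sheaves), while the algebraic side produces the Bernstein functor derived in a fixed direction; reconciling the two — and verifying that the twist by $\Omega_X^{\spcheck}$ in our formulation is exactly what removes the dual appearing in \cite{dual} and replaces $I^{\frak g,K}_{\frak q,L\cap K}$ by $P^{\frak g,K}_{\ovl{\frak q},L\cap K}$ — is where the real work lies. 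I would handle it by carefully tracking the equivariant line bundle $\Omega_{X/Y}$ and invoking Lemma~\ref{fil} and Lemma~\ref{affloc} to reduce everything to an identification of filtered $(\frak g, L\cap K)$-modules, then applying the exact-on-affines and derived-functor properties of $\Pi^K_{L\cap K}$ to conclude.
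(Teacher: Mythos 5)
Your plan hinges on ``applying Lemma~\ref{affloc} with $H=\ovl{Q}$,'' but the hypotheses of that lemma fail in exactly this case: Lemma~\ref{affloc} requires $H\cap K$ to be reductive and $Y=K/(H\cap K)$ to be affine, whereas with $H=\ovl{Q}$ the group $\ovl{Q}\cap K$ is a proper parabolic subgroup of $K$ (not reductive) and $Y=K/(\ovl{Q}\cap K)$ is a projective partial flag variety of $K$, not affine. Consequently your next assertion --- ``since $Y$ is affine, the sheaf cohomology on $Y$ vanishes in positive degree'' --- is false for this $Y$; the higher cohomology ${\rm H}^i(X, i_+{\cal O}_Y\otimes{\cal V}_X)$ really is sheaf cohomology on a projective variety, and you cannot offload it onto the Bernstein functor without an intermediate geometric step. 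A secondary gap: the filtration $F_p\,i^{-1}i_+{\cal M}$ of Lemma~\ref{fil} is only shown to be compatible with the $\frak{k}$-action, and its associated graded is only an ${\cal O}_Y$-module with $\frak{k}$-action; matching it ``term-by-term'' against a filtration of $U(\frak{g})\otimes_{U(\ovl{\frak{q}})}(V\otimes\bb{C}_{2\rho(\frak{u})})$ would at best give a $K$-module comparison, not the asserted $(\frak{g},K)$-module isomorphism.

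The missing idea is the one the paper uses to get around both problems simultaneously: pass to the enhanced space $\wtl{X}=G/L$ with $\wtl{Y}=K/(L\cap K)\hookrightarrow\wtl{X}$, where $L$ and $L\cap K$ \emph{are} reductive, so $\wtl{X}$ and $\wtl{Y}$ \emph{are} affine and Lemma~\ref{affloc} applies there. One then builds a relative Koszul/de~Rham complex ${\cal C}^\bullet=\tilde{\imath}_+{\cal O}_{\wtl{Y}}\otimes\Omega_{\wtl{X}/X}\otimes\bigwedge^\bullet{\cal T}_{\wtl{X}/X}$ on $\wtl{X}$ with a $\frak{g}$-equivariant quasi-isomorphism $\pi_*{\cal C}^\bullet\simeq(i_+{\cal O}_Y)[s]$ (the Hecht--Mili$\rm\check{c}$i$\rm\acute{c}$--Schmid--Wolf collapsing argument), applies the projection formula, and computes ${\rm H}^i(X,\cdot)$ as cohomology of global sections over the affine $\wtl{X}$. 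On the algebraic side the corresponding object is the standard (Koszul) resolution $U(\ovl{\frak{q}})\otimes_{U(\frak{l})}\bigwedge^\bullet\ovl{\frak{u}}\to\bb{C}$ tensored with $V\otimes\bb{C}_{2\rho(\frak{u})}$, which after tensoring up to $U(\frak{g})$ consists of $\Pi_{L\cap K}^K$-acyclic objects; the two complexes are identified via Lemma~\ref{affloc} and a fiber-at-the-base-point computation of the differentials. This enhanced-space detour is not a convenience but the load-bearing step, and your proposal does not supply it or a substitute for it.
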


\begin{proof}
Let $\wtl{X}:=G/L$ and $\wtl{Y}:=K/(L\cap K)$.
We have the commutative diagram: 
\begin{align*}
\xymatrix{
\wtl{Y} \ar[r]^*+{\tilde{\imath}} \ar[d] 
& \wtl{X} \ar[d]^*+{\pi}\\
Y   \ar[r]^*+{i}   & X
}
\end{align*}
where the maps are defined canonically.
Denote by ${\cal T}_{\wtl{X}/X}$ the sheaf of local vector fields
 on $\wtl{X}$ tangent to the fiber of $\pi$ and denote by $\Omega_{\wtl{X}/X}$
 the top exterior product of its dual ${\cal T}_{\wtl{X}/X} \spcheck$.
Then $\Omega_{\wtl{X}/X}$ is canonically isomorphic to
 $\Omega_{\wtl{X}}\otimes_{{\cal O}_{\wtl{X}}} \pi^*(\Omega_{X}\spcheck)$.
Consider the complex 
\begin{align*}
{\cal C}^{-d}:=
\tilde{\imath}_+{\cal O}_{\wtl{Y}}\otimes_{{\cal O}_{\wtl{X}}} 
\Omega_{\wtl{X}/X}
\otimes_{{\cal O}_{\wtl{X}}} \bigwedge^{d} {\cal T}_{\wtl{X}/X}.
\end{align*}
We give the boundary map 
${\cal C}^{-d} \to {\cal C}^{-d+1}$
by 
\begin{align*}
&f\otimes \omega \otimes \xi_1 \wedge \dots \wedge \xi_d \\
 \mapsto 
&\sum_i (-1)^{i+1} \bigl(
-\xi_i f \otimes \omega \otimes \xi_1 \wedge \dots 
 \wedge \widehat{\xi_i} \wedge \dots \wedge \xi_d \\
& \hspace*{160pt}
 +f\otimes \omega \xi_i \otimes \xi_1 \wedge \dots 
 \wedge \widehat{\xi_i} \wedge \dots \wedge \xi_d \bigr)\\
+&\sum_{i<j} (-1)^{i+j} \bigl(
f \otimes \omega \otimes [\xi_i,\xi_j] \wedge \xi_1 \wedge \dots \wedge
\widehat{\xi_i}\wedge\dots \wedge \widehat{\xi_j} \wedge
 \dots \wedge \xi_d\bigr), 
\end{align*}
where $f\in \tilde{\imath}_+{\cal O}_{\wtl{Y}}$, $\omega\in\Omega_{\wtl{X}/X}$
 and $\xi_1,\dots,\xi_d\in{\cal T}_{\wtl{X}/X}$. 
Since $\Omega_{\wtl{X}/X}$ and ${\cal T}_{\wtl{X}/X}$ are $G$-equivariant, 
$\frak{g}$ acts on them by differential.
The action of $\frak{g}$ on ${\cal C}^d$
 is given by
 the tensor product of the actions on
 $\tilde{\imath}_+{\cal O}_{\wtl{Y}}$, $\Omega_{\wtl{X}/X}$
 and ${\cal T}_{\wtl{X}/X}$.

By an argument in \cite{dual}, we have
 a quasi-isomorphism of the complexes 
 $\pi_*{\cal C}^\bullet \simeq (i_+{\cal O}_Y)[s]$,
 which respects $\frak{g}$-actions.
Here, $[s]$ denotes the shift by $s$.
Then the projection formula gives a quasi-isomorphism of complexes of
 ${\cal O}_X$-modules 
\begin{align*}
 \pi_*({\cal C}^\bullet 
 \otimes_{\pi^{-1}{\cal O}_{X}} \pi^{-1}{\cal V}_{X})
 \simeq i_+{\cal O}_Y \otimes_{{\cal O}_{X}} {\cal V}_{X}[s].
\end{align*}

The isomorphism
 $\Omega_{\wtl{X}/X}\simeq\Omega_{\wtl{X}}\otimes_{{\cal O}_{\wtl{X}}}
 \pi^*(\Omega_X\spcheck)$ 
gives 
\[{\cal C}^{-d}
 \otimes_{\pi^{-1}{\cal O}_{X}} \pi^{-1}{\cal V}_{X}
\simeq 
 \tilde{\imath}_+{\cal O}_{\wtl{Y}}
 \otimes_{{\cal O}_{\wtl{X}}} \Omega_{\wtl{X}}
 \otimes_{{\cal O}_{\wtl{X}}} \bigwedge^{d} {\cal T}_{\wtl{X}/X}
 \otimes_{{\cal O}_{\wtl{X}}}
 \pi^*({\cal V}_X \otimes_{{\cal O}_X} \Omega_X\spcheck).
\]
The boundary map $\partial$ on the right side is given by 
\begin{align*}
&f\otimes \xi_1\wedge\cdots\wedge\xi_d\otimes v \\
\mapsto\ &\sum_i(-1)^{i+1}\bigl( f\xi_i\otimes 
 \xi_1\wedge\cdots\wedge\widehat{\xi_i}\wedge\cdots\wedge\xi_d \otimes v \\
& \hspace*{160pt}
 -f \otimes \xi_1\wedge\cdots\wedge\widehat{\xi_i}
 \wedge\cdots\wedge\xi_d \otimes \xi_i v \bigr) \\
+\ &\sum_{i<j} (-1)^{i+j} \bigl(f\otimes [\xi_i,\xi_j]\wedge \xi_1\wedge 
 \cdots \wedge \widehat{\xi_i}\wedge\cdots \wedge\widehat{\xi_j} \wedge
 \cdots \wedge \xi_d \otimes v \bigr)
\end{align*}
 for $f\in \tilde{\imath}_+{\cal O}_{\wtl{Y}}\otimes\Omega_{\wtl{X}}$,
 $\xi_1,\dots,\xi_d \in {\cal T}_{\wtl{X}/X}$, and 
 $v\in \pi^*({\cal V}_X \otimes_{{\cal O}_X} \Omega_X\spcheck)$.
Here, the action of $\xi_i\in{\cal T}_{\wtl{X}/X}$ on
 $\tilde{\imath}_+{\cal O}_{\wtl{Y}}\otimes\Omega_{\wtl{X}}$
 is defined by the right ${\cal D}_{\wtl{X}}$-module
 structure of $\tilde{\imath}_+{\cal O}_{\wtl{Y}}\otimes\Omega_{\wtl{X}}$, 
 and the action of $\xi_i$ on
\[\pi^*({\cal V}_X \otimes_{{\cal O}_X} \Omega_X\spcheck)
 :={\cal O}_{\wtl{X}}\otimes_{\pi^{-1}{\cal O}_X}
 \pi^{-1}({\cal V}_X \otimes_{{\cal O}_X} \Omega_X\spcheck)\]
 is given by the action on the first factor 
${\cal O}_{\wtl{X}}$ of the right side.
Since $\wtl{X}$ is affine, we have an isomorphism of $(\frak{g},K)$-modules
\begin{align*}
&{\rm H}^{i-s}\Bigl(\Gamma\bigl(\wtl{X},\,  \tilde{\imath}_+{\cal O}_{\wtl{Y}}
 \otimes_{{\cal O}_{\wtl{X}}} \Omega_{\wtl{X}}
 \otimes_{{\cal O}_{\wtl{X}}} \bigwedge^{\bullet} {\cal T}_{\wtl{X}/X}
 \otimes_{{\cal O}_{\wtl{X}}}
 \pi^*({\cal V}_X \otimes_{{\cal O}_X} \Omega_X\spcheck)\bigr)\Bigr)\\
\simeq \ 
&{\rm H}^i(X,\, i_+{\cal O}_Y \otimes_{{\cal O}_{X}} {\cal V}_{X}).
\end{align*}

We now compute the cohomologically induced module
 $(\Pi_{L\cap K}^{K})_{s-i}(U(\frak{g})\otimes_{U(\Bar{\frak{q}})}
 (V\otimes \bb{C}_{2\rho(\frak{u})}))$. 
The standard complex of $\Bar{\frak{u}}$ is the complex
 $U(\Bar{\frak{u}})\otimes\bigwedge^{\bullet}\Bar{\frak{u}}$
 with the boundary map
\begin{align*}
D \otimes 
 \xi_1\wedge \dots \wedge \xi_d
\mapsto\ &
\sum_{i=1}^d (-1)^{i+1} \bigl( D\xi_i \otimes 
 \xi_1\wedge \dots \wedge \widehat{\xi_i} \wedge
 \dots \wedge \xi_d \bigr)\\
+&\sum_{i<j}(-1)^{i+j} \bigl( D\otimes 
[\xi_i,\xi_j]\wedge \xi_1 \wedge \dots \wedge \widehat{\xi_i}
\wedge \dots \wedge \widehat{\xi_j} \wedge \dots \wedge \xi_d \bigr) 
\end{align*}
for $D\in U(\Bar{\frak{u}})$ and $\xi_1,\dots,\xi_d\in \Bar{\frak{u}}$.
This gives a left resolution of the trivial $\Bar{\frak{u}}$-module:
\[
U(\Bar{\frak{u}})\otimes\bigwedge^{\bullet}\Bar{\frak{u}}\to \bb{C}.
\]
Since $U(\bar{\frak{u}})\simeq U(\bar{\frak{q}})/U(\bar{\frak{q}})\frak{l}$, 
 we have an isomorphism 
\[
U(\Bar{\frak{q}})\otimes_{U(\frak{l})}\bigwedge^{d}\Bar{\frak{u}}\simeq
U(\Bar{\frak{u}})\otimes_\bb{C}\bigwedge^{d}\Bar{\frak{u}}.\]
Hence we have a left resolution of the trivial 
$(\Bar{\frak{q}},L\cap K)$-modules:
\[
U(\Bar{\frak{q}})\otimes_{U(\frak{l})}\bigwedge^{d}\Bar{\frak{u}}\to \bb{C}.
\]
By taking tensor product with
 $V\otimes \bb{C}_{2\rho(\frak{u})}$,
 we get a resolution of the $(\Bar{\frak{q}},L\cap K)$-module
 $V\otimes\bb{C}_{2\rho(\frak{u})}$:
\[
U(\Bar{\frak{q}})\otimes_{U(\frak{l})}(\bigwedge^{\bullet}\Bar{\frak{u}}
\otimes V\otimes \bb{C}_{2\rho(\frak{u})})
\simeq 
(U(\Bar{\frak{q}})\otimes_{U(\frak{l})}\bigwedge^{\bullet}\Bar{\frak{u}})
\otimes (V\otimes \bb{C}_{2\rho(\frak{u})})
\to V\otimes \bb{C}_{2\rho(\frak{u})}.
\]
Therefore, we have a resolution of the
 $(\frak{g},L\cap K)$-module
 $U(\frak{g})\otimes_{U(\Bar{\frak{q}})}
 (V\otimes\bb{C}_{2\rho(\frak{u})})$:
\[
U(\frak{g})\otimes_{U(\frak{l})} 
(\bigwedge^\bullet
  \Bar{\frak{u}}\otimes V\otimes\bb{C}_{2\rho(\frak{u})})
\to U(\frak{g})\otimes_{U(\Bar{\frak{q}})}
 (V\otimes\bb{C}_{2\rho(\frak{u})}),
\]
where the boundary map $\partial'$ is given by
\begin{align*}
&D \otimes 
 \xi_1\wedge \dots \wedge \xi_d \otimes v\\
\mapsto 
\ &\sum_{i=1}^d (-1)^{i+1} \bigl(D\xi_i \otimes 
 \xi_1\wedge \dots \wedge \widehat{\xi_i} \wedge \dots 
\wedge \xi_d \otimes v \\
&\hspace*{160pt}- D \otimes 
 \xi_1\wedge \dots \wedge \widehat{\xi_i} \wedge \dots 
\wedge \xi_d \otimes \xi_i v 
\bigr)\\
+\ &\sum_{i<j}(-1)^{i+j}\bigl(D\otimes 
[\xi_i,\xi_j]\wedge \xi_1 \wedge \dots \wedge \widehat{\xi_i}
\wedge \dots \wedge \widehat{\xi_j} \wedge \dots \wedge \xi_d
\otimes v\bigr) 
\end{align*}
for $D\in U(\frak{g})$, $\xi_1,\dots,\xi_d\in \Bar{\frak{u}}$,
 and $v\in V\otimes \bb{C}_{2\rho(\frak{u})}$.

\begin{lem}
For any $(\frak{l},L\cap K)$-module $W$, 
 the $(\frak{g}, L\cap K)$-module $U(\frak{g})\otimes_{U(\frak{l})} W$
 is $\Pi_{L\cap K}^K$-acyclic.
\end{lem}
\begin{proof}
By \cite[Proposition 2.115]{KnVo}, 
 $(P_{\frak{g},L\cap K}^{\frak{g},K})_j
 (U(\frak{g})\otimes_{U(\frak{l})} W)
 \simeq(P_{\frak{k},L\cap K}^{\frak{k},K})_j
 (U(\frak{g})\otimes_{U(\frak{l})} W)$
 as a $K$-module.
Hence
 it is enough to show that
 $(P_{\frak{k},L\cap K}^{\frak{k},K})_j
 (U(\frak{g})\otimes_{U(\frak{l})} W)=0$
 for $j>0$.
Let $U_p(\frak{g})$ be the standard filtration of $U(\frak{g})$ and
 let $U_p'(\frak{g}):=U(\frak{k})U_p(\frak{g})U(\frak{l})\subset U(\frak{g})$
 for $p\geq 0$.
Then $U_p'(\frak{g})\otimes_{U(\frak{l})} W$ is
 a filtration of
 the $(\frak{k},L\cap K)$-module
 $U(\frak{g})\otimes_{U(\frak{l})} W$ and
it follows that
\[
U_p'(\frak{g})\otimes_{U(\frak{l})} W \,/\,
U_{p-1}'(\frak{g})\otimes_{U(\frak{l})} W\simeq
U(\frak{k})\otimes_{U(\frak{l}\cap \frak{k})} 
(S^p(\frak{g}/(\frak{l}+\frak{k}))\otimes W).
\]
Since 
\[{\Hom}_{\frak{k},L\cap K}(
U(\frak{k})\otimes_{U(\frak{l}\cap \frak{k})} 
(S^p(\frak{g}/(\frak{l}+\frak{k}))\otimes W), \,\cdot\,)
\simeq {\Hom}_{L\cap K}(S^p(\frak{g}/(\frak{l}+\frak{k}))\otimes W, \,\cdot\,),
\]
we see that 
$U_p'(\frak{g})\otimes_{U(\frak{l})} W \,/\,
 U_{p-1}'(\frak{g})\otimes_{U(\frak{l})} W$
 is a projective $(\frak{k},L\cap K)$-module.
Then we see inductively that 
$U_p'(\frak{g})\otimes_{U(\frak{l})} W$ is
 also a projective $(\frak{k},L\cap K)$-module
 and in particular
 $P_{\frak{k},L\cap K}^{\frak{k},K}$-acyclic.
As a consequence, 
\begin{align*}
(P_{\frak{k},L\cap K}^{\frak{k},K})_j(U(\frak{g})\otimes_{U(\frak{l})} W)
&=(P_{\frak{k},L\cap K}^{\frak{k},K})_j\varinjlim_p 
 (U_p'(\frak{g})\otimes_{U(\frak{l})} W)\\
&=\varinjlim_p  (P_{\frak{k},L\cap K}^{\frak{k},K})_j(
 U_p'(\frak{g})\otimes_{U(\frak{l})} W)
=0
\end{align*}
for $j>0$.
\end{proof}

From the lemma, we conclude that 
\[(\Pi_{L\cap K}^{K})_{s-i}
(U(\frak{g})\otimes_{U(\Bar{\frak{q}})}(V\otimes \bb{C}_{2\rho(\frak{u})}))
\simeq {\rm H}^{i-s}(\Pi_{L\cap K}^{K}(
U(\frak{g})\otimes_{U(\frak{l})}
(\bigwedge^\bullet \bar{\frak{u}}
\otimes V\otimes \bb{C}_{2\rho(\frak{u})}))).\]
To complete the proof of Theorem \ref{locZuc},
 it is enough to give an isomorphism of
 the complexes of $(\frak{g},K)$-modules: 
\begin{align}
\label{eqn:cpxiso}
&\Gamma\bigl(\wtl{X},\ 
 \tilde{\imath}_+{\cal O}_{\wtl{Y}}\otimes_{{\cal O}_{\wtl{X}}}
 \Omega_{\wtl{X}} \otimes_{{\cal O}_{\wtl{X}}} 
 \bigwedge^\bullet {\cal T}_{\wtl{X}/X}\otimes_{{\cal O}_{\wtl{X}}}
 \pi^*({\cal V}_X\otimes_{{\cal O}_X} \Omega_X\spcheck)\bigr)\\ \nonumber
\xrightarrow{\sim}\ 
 &R(\frak{g},K)\otimes_{R(\frak{l},L\cap K)}
 (\bigwedge^\bullet \bar{\frak{u}} \otimes 
 V\otimes \bb{C}_{2\rho(\frak{u})}).
\end{align}

Let $o:=e(L\cap K)\in \wtl{Y}$ be the base point and
 $i_o:\{o\}\to \wtl{Y}$ the immersion.
Define the complex of left ${\cal D}_{\wtl{Y}}$-modules
\[\tilde{\imath} {\,}^*{\cal D}_{\wtl{X}} 
 \otimes_{\tilde{\imath}{\,}^{-1}{\cal O}_{\wtl{X}}}
 \tilde{\imath}{\,}^{-1}\bigl(
 \bigwedge^\bullet {\cal T}_{\wtl{X}/X}\otimes_{{\cal O}_{\wtl{X}}}
 \pi^*({\cal V}_X\otimes_{{\cal O}_X} \Omega_X\spcheck)\bigr),\]
where the boundary map 
\begin{align*}
\partial:\ 
&\tilde{\imath} {\,}^*{\cal D}_{\wtl{X}} 
 \otimes_{\tilde{\imath}{\,}^{-1}{\cal O}_{\wtl{X}}}
 \tilde{\imath}{\,}^{-1}\bigl(
 \bigwedge^d {\cal T}_{\wtl{X}/X}\otimes_{{\cal O}_{\wtl{X}}}
 \pi^*({\cal V}_X\otimes_{{\cal O}_X} \Omega_X\spcheck)\bigr)\\
\to\ &\tilde{\imath} {\,}^*{\cal D}_{\wtl{X}} 
 \otimes_{\tilde{\imath}{\,}^{-1}{\cal O}_{\wtl{X}}}
 \tilde{\imath}{\,}^{-1}\bigl(
 \bigwedge^{d-1} {\cal T}_{\wtl{X}/X}\otimes_{{\cal O}_{\wtl{X}}}
 \pi^*({\cal V}_X\otimes_{{\cal O}_X} \Omega_X\spcheck)\bigr)
\end{align*}
is given by
\begin{align}
\label{bound}
&\partial(D\otimes \xi_1\wedge\cdots\wedge\xi_d\otimes v) \\ \nonumber
:=\ &\sum_i(-1)^{i+1} \bigl( D\xi_i\otimes 
 \xi_1\wedge\cdots\wedge\widehat{\xi_i}\wedge\cdots\wedge
 \xi_d \otimes v\\ \nonumber
&\hspace*{160pt} -D \otimes \xi_1\wedge\cdots\wedge\widehat{\xi_i}
 \wedge\cdots\wedge\xi_d \otimes \xi_i v \bigr) \\ \nonumber
+\ &\sum_{i<j} (-1)^{i+j} \bigl( D\otimes [\xi_i,\xi_j]\wedge \xi_1\wedge 
 \cdots \wedge \widehat{\xi_i}\wedge\cdots \wedge\widehat{\xi_j} \wedge
 \cdots \wedge \xi_d \otimes v \bigr).
\end{align}
for $D\in \tilde{\imath}{\,}^*{\cal D}_{\wtl{X}}$, 
 $\xi,\cdots,\xi_d\in {\cal T}_{\wtl{X}/X}$, and 
 $v\in \pi^*({\cal V}_X\otimes_{{\cal O}_X} \Omega_X\spcheck)$.
In view of the proof of Lemma~\ref{affloc},
 we have only to see that the pull-back $(i_o)^*$ sends
 the complex
\begin{align*}
\tilde{\imath} {\,}^*{\cal D}_{\wtl{X}} 
 \otimes_{\tilde{\imath}{\,}^{-1}{\cal O}_{\wtl{X}}}
 \tilde{\imath}{\,}^{-1}\bigl(
 \bigwedge^\bullet {\cal T}_{\wtl{X}/X}\otimes_{{\cal O}_{\wtl{X}}}
 \pi^*({\cal V}_X\otimes_{{\cal O}_X} \Omega_X\spcheck)\bigr)
\end{align*}
to $U(\frak{g})\otimes_{U(\Bar{\frak{q}})} 
(\bigwedge^{\bullet}
 \bar{\frak{u}}\otimes V\otimes \bb{C}_{2\rho(\frak{u})})$.

Write $V^d:=\bigwedge^{d}\Bar{\frak{u}}\otimes V\otimes
 \bb{C}_{2\rho(\frak{u})}$ for simplicity.
Since $\bigwedge^{d} {\cal T}_{\wtl{X}/X}\otimes_{{\cal O}_{\wtl{X}}}
 \pi^*({\cal V}_X\otimes_{{\cal O}_X} \Omega_X\spcheck)$
 is isomorphic to
 the ${\cal O}_{\wtl{X}}$-module ${\cal V}_{\wtl{X}}^d$ associated with
 the $L$-module $V^d$,
 it follows that
\begin{align*}
&(i_o)^*\Bigl(\tilde{\imath} {\,}^*{\cal D}_{\wtl{X}} 
 \otimes_{\tilde{\imath}{\,}^{-1}{\cal O}_{\wtl{X}}}
 \tilde{\imath}{\,}^{-1}\bigl(
 \bigwedge^{d} {\cal T}_{\wtl{X}/X}\otimes_{{\cal O}_{\wtl{X}}}
 \pi^*({\cal V}_X\otimes_{{\cal O}_X} \Omega_X\spcheck)\bigr)\Bigr)\\
\simeq \ 
&(i_o)^*\bigl(\tilde{\imath} {\,}^*{\cal D}_{\wtl{X}} 
 \otimes_{\tilde{\imath}{\,}^{-1}{\cal O}_{\wtl{X}}}
 \tilde{\imath}{\,}^{-1}
 {\cal V}_{\wtl{X}}^d\bigr)\\
\simeq \ 
&U(\frak{g})\otimes_{U(\frak{l})} V^d
\end{align*}
as in the proof of Lemma \ref{affloc}.
Therefore, 
$\tilde{\imath} {\,}^*{\cal D}_{\wtl{X}} 
 \otimes_{\tilde{\imath}{\,}^{-1}{\cal O}_{\wtl{X}}}
 \tilde{\imath}{\,}^{-1}
 {\cal V}_{\wtl{X}}^d$
 is isomorphic to the $K$-equivariant ${\cal O}_{\wtl{Y}}$-module
 associated with the $(L\cap K)$-module $U(\frak{g})\otimes_{U(\frak{l})} V^d$.
Via this isomorphism, we view a section
\[
f\in  \tilde{\imath} {\,}^*{\cal D}_{\wtl{X}} 
 \otimes_{\tilde{\imath}{\,}^{-1}{\cal O}_{\wtl{X}}}
 \tilde{\imath}{\,}^{-1}
 {\cal V}_{\wtl{X}}^d
\]
as a regular function on an open set of $K$ that takes values in
 $U(\frak{g}) \otimes_{U(\frak{l})} V^d$. 
Write $f(e)\in U(\frak{g}) \otimes_{U(\frak{l})} V^d$
 for the evaluation at the identity $e\in K$.
The boundary map \eqref{bound} is 
 ${\cal O}_{\wtl{Y}}$-linear and hence induces 
 an operator 
\[\partial_e: U(\frak{g})\otimes_{U(\frak{l})} V^d
 \to U(\frak{g})\otimes_{U(\frak{l})} V^{d-1}\]
such that
$\partial_e(f(e))=(\partial f)(e)$ for every 
$f\in  \tilde{\imath} {\,}^*{\cal D}_{\wtl{X}} 
 \otimes_{\tilde{\imath}{\,}^{-1}{\cal O}_{\wtl{X}}}
 \tilde{\imath}{\,}^{-1}
 {\cal V}_{\wtl{X}}^d.$
It is enough to show that $\partial_e=\partial'$ on
 $U(\frak{g})\otimes_{U(\frak{l})} V^\bullet$.

Put $Z:=(\ovl{U}\cdot L)/L\subset G/L=\wtl{X}$
 and write $i_Z : Z\to \wtl{X}$ for the inclusion map
 so that $i_Z(Z)=\pi^{-1}(\{o\})$.
Then under the isomorphism $Z\simeq \ovl{U}$,
 there is a canonical
 isomorphism of $\ovl{U}$-equivariant ${\cal O}$-modules
 $\iota:i_Z^*{\cal T}_{\wtl{X}/X}\simeq {\cal T}_{\ovl{U}}$.

For $\xi_1,\dots,\xi_d\in\Bar{\frak{u}}$
 and $v\in V\otimes\bb{C}_{2\rho(\frak{u})}$, 
 put
\[m:=\xi_1\wedge\cdots\wedge\xi_d\otimes v\,\in\, V^d.\]
We will choose sections $\wtl{\xi_i} \in {\cal T}_{\wtl{X}/X}$
 and $\wtl{v}\in \pi^*({\cal V}_X\otimes_{{\cal O}_X} \Omega_X\spcheck)$
 on a neighborhood of the base point $o\in \wtl{X}$ in the following way.
Take $\wtl{\xi_i}\in {\cal T}_{\wtl{X}/X}$ 
 such that $\wtl{\xi_i}|_{Z}
 \in i_Z^*{\cal T}_{\wtl{X}/X}$
 corresponds to 
 $(\xi_i)_{\ovl{U}}^R$ under $\iota$.
The $G$-equivariant ${\cal O}_{X}$-module
 ${\cal V}_X \otimes_{{\cal O}_X} \Omega_X\spcheck$ is isomorphic to
 the ${\cal O}_{X}$-module associated with
 the $\ovl{Q}$-module $V\otimes\bb{C}_{2\rho(\frak{u})}$.
Hence $f\in  \pi^*({\cal V}_X \otimes_{{\cal O}_X} \Omega_X\spcheck)$
 is identified with a $(V\otimes \bb{C}_{2\rho(\frak{u})})$-valued
 regular function on an open set of $\wtl{X}$ satisfying 
 $f(gq)=q^{-1}\cdot f(g)$ for $g\in G$ and $q\in \ovl{Q}$.
With this identification, 
 we take a section $\wtl{v}'\in
 {\cal V}_X \otimes_{{\cal O}_X} \Omega_X\spcheck$
 on a neighborhood of $o$ 
 such that $\wtl{v}'(e)=v$.
Define the section
 $\wtl{v}\in \pi^*({\cal V}_X \otimes_{{\cal O}_X} \Omega_X\spcheck)$ 
as 
\[
\wtl{v}:=1\otimes \wtl{v}'\in
 {\cal O}_{\wtl{X}}\otimes_{\pi^{-1}{\cal O}_X}
\pi^{-1}({\cal V}_X \otimes_{{\cal O}_X} \Omega_X\spcheck).
\]
and define the section $\wtl{m}\in {\cal V}^d_{\wtl{X}}$
 in a neighborhood of $o$ as
\begin{align*}
\wtl{m}:=\wtl{\xi_1}\wedge\cdots\wedge\wtl{\xi_d}
 \otimes \wtl{v}
 \,\in\, {\cal V}^d_{\wtl{X}}.
\end{align*}
Then 
\[1\otimes \wtl{m}\in
\tilde{\imath} {\,}^*{\cal D}_{\wtl{X}} 
 \otimes_{\tilde{\imath}{\,}^{-1}{\cal O}_{\wtl{X}}}
 \tilde{\imath}{\,}^{-1}
 {\cal V}_{\wtl{X}}^d
\]
satisfies
$(1\otimes \wtl{m}) (e) =1\otimes m$.

We have
\begin{align*}
&\partial(1\otimes \wtl{m})\\
=\ 
 &\sum_i(-1)^{i+1} \Bigl((\xi_i)_{\wtl{X}} \otimes 
 \wtl{\xi_1}\wedge\cdots\wedge\widehat{\wtl{\xi_i}}\wedge\cdots\wedge
 \wtl{\xi_d}\otimes \wtl{v} \\
&\hspace*{160pt} -1 \otimes 
 \wtl{\xi_1}\wedge\cdots\wedge\widehat{\wtl{\xi_i}}\wedge\cdots\wedge
 \wtl{\xi_d}\otimes \wtl{\xi_i}\wtl{v}
 \Bigr)\\
 +\ &\sum_{i<j}(-1)^{i+j}\, 
 \bigl(1\otimes [\wtl{\xi_i},\wtl{\xi_j}]\wedge
 \wtl{\xi_1}\wedge\cdots
 \wedge\widehat{\wtl{\xi_i}}\wedge\cdots\wedge\widehat{\wtl{\xi_j}}\wedge
 \cdots\wedge\wtl{\xi_d}\otimes \wtl{v}\bigr)
\end{align*}
and
\begin{align*}
&\partial'(1\otimes m)\\
=\ & \sum_i (-1)^{i+1} \bigl(\xi_i \otimes
 \xi_1\wedge\cdots\wedge\widehat{\xi_i}\wedge\cdots\wedge\xi_d
 \otimes v
- 1 \otimes
 \xi_1\wedge\cdots\wedge\widehat{\xi_i}\wedge\cdots\wedge\xi_d
 \otimes \xi_i v\bigr) \\
+\ &\sum_{i<j}(-1)^{i+j}\, 
 \bigl( 1 \otimes [\xi_i,\xi_j]\wedge
 \xi_1\wedge\cdots
 \wedge\widehat{\xi_i}\wedge\cdots\wedge\widehat{\xi_j}\wedge
 \cdots\wedge\xi_d \otimes v\bigr).
\end{align*}

Since $\wtl{\xi_i}|_{Z}$ corresponds to $(\xi_i)^R_{\ovl{U}}$, 
 the tangent vectors at the base point $o$ of the vector fields
 $\wtl{\xi_i}$ and $(\xi_i)_{\wtl{X}}$ have the relation:
 $(\wtl{\xi_i})_{o}
 =-((\xi_i)_{\wtl{X}})_{o}$.
Recall that the $\frak{g}$-actions on
 ${\cal T}_{\wtl{X}/X}$ and
 $\pi^*({\cal V}_{\wtl X}\otimes\Omega_{\wtl{X}})$
 are defined as the differentials of the $G$-equivariant structures on them.
Our choice implies that $\wtl{\xi_j}|_{Z}$ is left $\ovl{U}$-invariant
 and hence $\xi_i\cdot\wtl{\xi_j}|_{Z}=0$.
We therefore have
\[(1\otimes\xi_i
 (\wtl{\xi_1}\wedge\cdots\wedge\widehat{\wtl{\xi_i}}\wedge
 \cdots\wedge\wtl{\xi_d})\otimes \wtl{v})(e)
 =0.
\]
In addition, our choice of $\wtl{v}$ implies that
 ${\cal T}_{\wtl{X}/X} \wtl{v}=0$ and
 $(\xi_i \wtl{v})(e)=\xi_i v$.
As a result,  
\begin{align*}
&\Bigl((\xi_i)_{\wtl{X}} \otimes 
 \wtl{\xi_1}\wedge\cdots\wedge\widehat{\wtl{\xi_i}}\wedge\cdots\wedge
 \wtl{\xi_d}\otimes \wtl{v}
 -1 \otimes 
 \wtl{\xi_1}\wedge\cdots\wedge\widehat{\wtl{\xi_i}}\wedge\cdots\wedge
 \wtl{\xi_d}\otimes \wtl{\xi_i}\wtl{v}
 \Bigr)(e)\\ \nonumber
=\ &\Bigl((\xi_i)_{\wtl{X}} \otimes 
 \wtl{\xi_1}\wedge\cdots\wedge\widehat{\wtl{\xi_i}}\wedge\cdots\wedge
 \wtl{\xi_d}\otimes \wtl{v}
 \Bigr)(e)\\ \nonumber
=\ &\bigl(\xi_i(1\otimes \wtl{\xi_1}\wedge\cdots\wedge\widehat{\wtl{\xi_i}}
 \wedge\cdots\wedge\wtl{\xi_d}\otimes \wtl{v})\bigr)(e)
- (1\otimes \wtl{\xi_1}\wedge\cdots\wedge\widehat{\wtl{\xi_i}}
 \wedge\cdots\wedge\wtl{\xi_d}\otimes \xi_i\wtl{v})(e) \\ \nonumber
=\ &\xi_i\bigl((1\otimes \wtl{\xi_1}\wedge\cdots\wedge\widehat{\wtl{\xi_i}}
 \wedge\cdots\wedge\wtl{\xi_d}\otimes \wtl{v})(e)\bigr)
- (1\otimes \wtl{\xi_1}\wedge\cdots\wedge\widehat{\wtl{\xi_i}}
 \wedge\cdots\wedge\wtl{\xi_d}\otimes \xi_i\wtl{v})(e) \\ \nonumber
=\ & \xi_i\otimes
 \xi_1\wedge\cdots\wedge\widehat{\xi_i}\wedge\cdots\wedge\xi_d
 \otimes v
 - 1\otimes \xi_1\wedge\cdots\wedge\widehat{\xi_i}\wedge\cdots\wedge\xi_d
 \otimes \xi_i v.
\end{align*}
Moreover, $[\wtl{\xi_i},\wtl{\xi_j}]|_Z$ corresponds to
 $[(\xi_i)_{\ovl{U}}^R,(\xi_j)_{\ovl{U}}^R]
 =([\xi_i,\xi_j])_{\ovl{U}}^R$.
Hence 
\begin{align*}
 &\bigl(1\otimes [\wtl{\xi_i},\wtl{\xi_j}]\wedge
 \wtl{\xi_1}\wedge\cdots
 \wedge\widehat{\wtl{\xi_i}}\wedge\cdots\wedge\widehat{\wtl{\xi_j}}\wedge
 \cdots\wedge\wtl{\xi_d}\otimes \wtl{v}\bigr)(e)\\
=\ &1\otimes [\xi_i, \xi_j]\wedge
 \xi_1\wedge\cdots
 \wedge\widehat{\xi_i}\wedge\cdots\wedge\widehat{\xi_j}\wedge
 \cdots\wedge \xi_d\otimes v.
\end{align*}

We thus conclude that
\[\partial_e(1\otimes m)=\partial_e((1\otimes \wtl{m})(e))
 =(\partial(1\otimes \wtl{m}))(e)=\partial' (1\otimes m).\]
Since $\partial_e$ and $\partial'$ commute with $\frak{g}$-actions,
 $\partial_e=\partial'$.
Therefore, we obtain an isomorphism \eqref{eqn:cpxiso}
 and prove the theorem.
\end{proof}

%%%%%%%%%%%%%%%%%%%%%%%%%%%%%%%%%%%%%%%%%%%%%%%%%
%%%%%%%%%%%%%%%%%%%%%%%%%%%%%%%%%%%%%%%%%%%%%%%%%
%%%%%%%%%%%%%%%%%%%%%%%%%%%%%%%%%%%%%%%%%%%%%%%%%
%%%%%%%%%%%%%%%%%%%%%%%%%%%%%%%%%%%%%%%%%%%%%%%%%
%%%%%%%%%%%%%%%%%%%%%%%%%%%%%%%%%%%%%%%%%%%%%%%%%
%%%%%%%%%%%%%%%%%%%%%%%%%%%%%%%%%%%%%%%%%%%%%%%%%

\section{Construction of Parabolic Subalgebras}\label{sec:constparab}
Let $G_0$ be a connected real linear reductive Lie group
 with Lie algebra $\frak{g}_0$ and $\sigma$ an involution of $G_0$. 
Let $G'_0$ be the identity component of the fixed point set $G_0^\sigma$.
There exists a Cartan involution $\theta$ of $G_0$ that
 commutes with $\sigma$.
The corresponding maximal compact subgroups of $G_0$ and $G'_0$
 are written as $K_0:=G_0^{\theta}$ and $K_0':=(G_0')^{\theta}$, respectively.
The Cartan decompositions are written as
 $\frak{g}_0=\frak{k}_0+\frak{p}_0$ and $\frak{g}'_0=\frak{k}'_0+\frak{p}'_0$.
We denote by $\frak{g},\frak{g'},\frak{k}$, etc.\ the complexifications
 of $\frak{g}_0,\frak{g}'_0,\frak{k}_0$, etc.
Let $\sigma$ and $\theta$ also denote the induced actions
 on $\frak{g}_0$
 and their complex linear extensions to $\frak{g}$.

\begin{de}
{\rm 
Let $V$ be a $(\frak{g}',K')$-module.
We say that $V$ is {\it discretely decomposable}
 if $V$ admits a filtration $\{V_p\}_{p\in\bb{N}}$
 such that $V=\bigcup_{p\in\bb{N}} V_p$ and
 $V_p$ is of finite length as a $(\frak{g}',K')$-module
 for each $p\in\bb{N}$.
}
\end{de}

If $V$ is unitarizable and discretely decomposable,
 then $V$ is an algebraic direct sum of irreducible
 $(\frak{g}',K')$-modules (see \cite[Lemma 1.3]{kob98ii}).

\begin{de}
\label{openparab}
{\rm 
Suppose that $\frak{q}$ is a $\theta$-stable parabolic
 subalgebra of $\frak{g}$.
We say that $\frak{q}$ is {\it $\sigma$-open}
 if $\frak{q}\cap\frak{k}+\frak{k}'=\frak{k}.$
}
\end{de}

\begin{rem}
{\rm 
If $\frak{q}$ is a $\theta$-stable parabolic subalgebra of $\frak{g}$,
 there exists a $\sigma$-open $\theta$-stable
 parabolic subalgebra that is conjugate to $\frak{q}$
 under the adjoint action of $K_0$.
}
\end{rem}

We write
 ${\cal{N}}_{\frak{g}}$ and ${\cal{N}}_{\frak{g'}}$
 for the nilpotent cones of $\frak{g}$ and $\frak{g}'$, respectively.
Let $\pr_{\frak{g}\to\frak{g'}}$ denote the projection
 from $\frak{g}$ onto $\frak{g}'$ along $\frak{g}^{-\sigma}$.

\begin{thm}
\label{parab}
Let $(G_0,G'_0)$ be a symmetric pair of connected
 real linear reductive Lie groups defined by an involution $\sigma$.
Let $\frak{q}$ be a $\sigma$-open $\theta$-stable
 parabolic subalgebra of $\frak{g}$.
Then the following three conditions are equivalent.

\begin{enumerate}
\item[{\rm (i)}]
$A_\frak{q}(\lambda)$ is
 nonzero and discretely decomposable
 as a $(\frak{g}',K')$-module for some $\lambda$
 in the weakly fair range.
\item[$({\rm ii})$]
$A_\frak{q}(\lambda)$ is
 discretely decomposable
 as a $(\frak{g}',K')$-module for any $\lambda$
 in the weakly fair range.
\item[{\rm (iii)}]
Put
 $\frak{q}':=N_{\frak{k}'}(\frak{q}\cap \frak{p}')
+(\frak{q}\cap\frak{p}')$,
 where $N_{\frak{k}'}(\frak{q}\cap \frak{p}')$ is the normalizer of
 $\frak{q}\cap \frak{p}'$ in $\frak{k}'$.
Then $\frak{q}'$ is a $\theta$-stable parabolic subalgebra of $\frak{g}'$.
\end{enumerate}
\end{thm}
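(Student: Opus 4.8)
The plan is to establish the three-way equivalence by exploiting the known discrete-decomposability criterion of Kobayashi together with the explicit nilpotent-cone description of the associated variety of $A_\frak{q}(\lambda)$. Recall from \cite{kob98ii} that, for a unitarizable $(\frak{g},K)$-module $W$, the restriction $W|_{(\frak{g}',K')}$ is discretely decomposable if and only if the intersection $\pr_{\frak{g}\to\frak{g}'}({\Ass}_{\frak{g}}(W))$ is contained in ${\cal N}_{\frak{g}'}$; equivalently, by the cone criterion, $\pr_{\frak{g}\to\frak{g}'}({\cal N}_{\frak{g}}\cap {\Ass}_{\frak{g}}(W))$ consists of nilpotent elements. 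For $A_\frak{q}(\lambda)$ in the good range the associated variety is $\ovl{K\cdot(\frak{u}\cap\frak{p})}$, and in the weakly fair range it is contained in this set; since $\frak{q}$ is $\sigma$-open, one checks that $\pr_{\frak{g}\to\frak{g}'}(\frak{u}\cap\frak{p})=\frak{u}\cap\frak{p}'$ (the $\sigma$-open condition $\frak{q}\cap\frak{k}+\frak{k}'=\frak{k}$ is exactly what forces the projection of $\frak{u}\cap\frak{p}$ along $\frak{g}^{-\sigma}$ to land in $\frak{g}'$ with image $\frak{u}\cap\frak{p}'$). So the discrete decomposability of $A_\frak{q}(\lambda)$ is governed by whether $K'\cdot(\frak{u}\cap\frak{p}')$ is a nilpotent subvariety of $\frak{g}'$.

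First I would prove (i) $\Leftrightarrow$ (ii): both are reformulations, via the criterion above, of the single geometric condition that $\frak{u}\cap\frak{p}'\subset{\cal N}_{\frak{g}'}$, which is independent of $\lambda$; the nonzeroness in (i) is harmless since $A_\frak{q}(\lambda)$ is nonzero in the good range by Fact~\ref{aqlam}(iii), and a weakly-fair-range deformation argument (the associated variety can only shrink) handles the passage from "some" to "any". Next I would prove (ii)/(i) $\Leftrightarrow$ (iii). The key observation is that $\frak{q}\cap\frak{p}'$, being built from $\frak{u}\cap\frak{p}'$ plus the reductive piece $\frak{l}\cap\frak{p}'$, together with its $\frak{k}'$-normalizer, produces a $\theta$-stable subalgebra $\frak{q}'$ precisely when $\frak{u}\cap\frak{p}'$ consists of nilpotent elements: indeed a $\theta$-stable parabolic of $\frak{g}'$ is determined by its nilradical-in-$\frak{p}'$ part, which must lie in ${\cal N}_{\frak{g}'}$, and conversely the $\frak{k}'$-normalizer construction reconstitutes a genuine parabolic once that nilpotency holds. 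So I would show: (a) if $\frak{q}'$ is a $\theta$-stable parabolic then $\frak{q}\cap\frak{p}'$ (its intersection with $\frak{p}'$, which equals the nilradical's $\frak{p}'$-part) lies in ${\cal N}_{\frak{g}'}$, giving the discrete-decomposability criterion; and (b) conversely, assuming $\frak{u}\cap\frak{p}'\subset{\cal N}_{\frak{g}'}$, that $\frak{q}'=N_{\frak{k}'}(\frak{q}\cap\frak{p}')+(\frak{q}\cap\frak{p}')$ is closed under bracket, self-normalizing, $\theta$-stable, and contains a Borel of $\frak{g}'$ — the last point being that a $\theta$-stable parabolic is characterized as $\frak{g}^x_{\geq 0}$ for a semisimple $x$, and the nilpotency lets us produce such an $x$ in $\frak{g}'$.

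The main obstacle I anticipate is step (b): verifying that the somewhat ad hoc formula $\frak{q}'=N_{\frak{k}'}(\frak{q}\cap\frak{p}')+(\frak{q}\cap\frak{p}')$ actually yields a parabolic subalgebra rather than merely a subalgebra. The natural route is to exhibit an explicit semisimple element $x\in\sqrt{-1}\frak{t}'_0$ (for $\frak{t}'_0$ a maximal torus of $\frak{k}'_0$) such that $\frak{q}'=\sum_{j\geq 0}\frak{g}'(x;j)$; one expects to obtain $x$ by projecting or adjusting the defining element of $\frak{q}$, using $\sigma$-openness to keep it inside $\frak{k}'$, and the nilpotency of $\frak{u}\cap\frak{p}'$ to guarantee that the nonnegative eigenspace closes up correctly. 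Checking that this $x$ reproduces exactly the normalizer-plus-$(\frak{q}\cap\frak{p}')$ description — in particular that $N_{\frak{k}'}(\frak{q}\cap\frak{p}')$ is the full $\frak{k}'$-part of $\frak{q}'$ and not something larger — is where the real work lies, and I would lean on the classification of $\theta$-stable parabolics together with the $K_0$-conjugacy remark to reduce to a convenient normal form before running this computation.
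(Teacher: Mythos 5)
Your central reduction—that discrete decomposability is ``governed by whether $K'\cdot(\frak{u}\cap\frak{p}')$ is a nilpotent subvariety of $\frak{g}'$''—rests on the claim $\pr_{\frak{g}\to\frak{g}'}(\frak{u}\cap\frak{p})=\frak{u}\cap\frak{p}'$, and that claim is false in general. The correct computation (Lemma~\ref{proj} of the paper, applied with $V=\frak{p}$, $X=\frak{p}'$, $W=\frak{q}\cap\frak{p}$) gives $\pr_{\frak{g}\to\frak{g}'}(\frak{u}\cap\frak{p})=\pr_{\frak{g}\to\frak{g}'}((\frak{q}\cap\frak{p})^{\perp\frak{p}})=(\frak{q}\cap\frak{p}')^{\perp\frak{p}'}$, and this space only \emph{contains} $\frak{u}\cap\frak{p}'=(\frak{q}\cap\frak{p})^{\perp\frak{p}}\cap\frak{p}'$; the containment is strict in general. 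Moreover $\frak{u}\cap\frak{p}'$ is \emph{automatically} contained in $\cal{N}_{\frak{g}'}$: an element of $\frak{u}$ is $\ad_\frak{g}$-nilpotent, and restricting $\ad$ to the stable subspace $\frak{g}'$ shows an element of $\frak{u}\cap\frak{g}'$ is $\ad_{\frak{g}'}$-nilpotent. So your reformulated criterion is always satisfied, which would make conditions (i)--(iii) hold unconditionally and empty the theorem of content. The whole point of Kobayashi's condition (iv), $\pr_{\frak{g}\to\frak{g}'}(\frak{u}\cap\frak{p})\subset{\cal N}_{\frak{g}'}$, is that the projection of a nilpotent element of $\frak{g}$ need not be nilpotent in $\frak{g}'$ (already visible for $(\frak{sl}_2\oplus\frak{sl}_2,\,\mathrm{diag}\,\frak{sl}_2)$, where $(e,f)\mapsto\tfrac12(e+f,e+f)$ is semisimple), and the nontrivial matter is whether the larger space $(\frak{q}\cap\frak{p}')^{\perp\frak{p}'}$ still lands in the cone.

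Everything downstream of that identification needs to be reworked with $(\frak{q}\cap\frak{p}')^{\perp\frak{p}'}$ in place of $\frak{u}\cap\frak{p}'$. In particular, your step (a) — ``$\frak{q}\cap\frak{p}'$ equals the nilradical's $\frak{p}'$-part, which lies in ${\cal N}_{\frak{g}'}$'' — is not the relevant statement; what the paper actually verifies is that when $\frak{q}'$ is parabolic, $(\frak{q}\cap\frak{p}')^{\perp\frak{p}'}=(\frak{q}'\cap\frak{p}')^{\perp\frak{p}'}=\frak{u}'\cap\frak{p}'$, and the nilradical $\frak{u}'$ of the parabolic $\frak{q}'$ supplies nilpotency. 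For the converse (iv) $\Rightarrow$ (iii), your plan (b) of producing an explicit grading element $x$ with $\frak{q}'=\frak{g}'(x;\geq 0)$ is a legitimate alternative strategy, but you would additionally need to deal with the fact that $(\frak{q}\cap\frak{p}')^{\perp\frak{p}'}$ being nilpotent forces $(\frak{q}\cap\frak{p}')^{\perp\frak{p}'}\subset\frak{q}\cap\frak{p}'$ (since the invariant form vanishes on nilpotent elements), which is the pivot on which the paper's argument turns: from it the paper identifies $N_{\frak{k}'}(\frak{q}\cap\frak{p}')^{\perp\frak{k}'}=[(\frak{q}\cap\frak{p}'),(\frak{q}\cap\frak{p}')^{\perp\frak{p}'}]$, concludes that $\frak{q}'^{\perp\frak{g}'}$ is an isotropic (hence solvable) subalgebra living inside the nilradical of a Borel $\frak{b}'$, and obtains $\frak{q}'\supset\frak{b}'$ without ever constructing $x$. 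Absent that input, I don't see how your $x$-construction gets off the ground: the $\sigma$-openness hypothesis alone does not tell you how to project the grading element of $\frak{q}$ into $\frak{k}'$ while preserving the normalizer description of $\frak{q}'$.
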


The proof is based on the following criterion for the discrete decomposability
({\cite[Theorem~4.2]{kob98ii}}).

\begin{fact}
\label{discdecomp}
In the setting of Theorem~\ref{parab},
 the following conditions are equivalent.
\begin{itemize}
\item[$({\rm i})$]
$A_\frak{q}(\lambda)$ is
 nonzero and discretely decomposable
 as a $(\frak{g}',K')$-module for some $\lambda$
 in the weakly fair range.
\item[$({\rm ii})$]
$A_\frak{q}(\lambda)$ is
 discretely decomposable
 as a $(\frak{g}',K')$-module for any $\lambda$
 in the weakly fair range.
\item[$({\rm iv})$]
$\pr_{\frak{g}\to\frak{g}'}(\frak{u}\cap\frak{p})\subset{\cal{N}}_{\frak{g}'}$
 for the nilradical $\frak{u}$ of $\frak{q}$.
\end{itemize}
\end{fact}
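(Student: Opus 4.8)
The plan is to invoke Fact~\ref{discdecomp} (equivalently \cite[Theorem~4.2]{kob98ii}), which already gives the equivalence of (i), (ii) and
\[
\text{(iv)}\qquad \pr_{\frak{g}\to\frak{g}'}(\frak{u}\cap\frak{p})\subset{\cal N}_{\frak{g}'},
\]
so that the theorem is reduced to proving (iii)$\Leftrightarrow$(iv). The first point I would record is that for \emph{any} $\theta$-stable parabolic $\frak{q}$ the subspace $\frak{q}':=N_{\frak{k}'}(\frak{q}\cap\frak{p}')+(\frak{q}\cap\frak{p}')$ is automatically a $\theta$-stable subalgebra of $\frak{g}'$: the normalizer of a subspace is a Lie subalgebra, it stabilizes $\frak{q}\cap\frak{p}'$ by construction, and since $\frak{q}$ is a subalgebra with $[\frak{k},\frak{p}]\subset\frak{p}$, $[\frak{p},\frak{p}]\subset\frak{k}$ one gets $[\frak{q}\cap\frak{p}',\frak{q}\cap\frak{p}']\subset\frak{q}\cap\frak{k}'\subset N_{\frak{k}'}(\frak{q}\cap\frak{p}')$ and $[\frak{q}\cap\frak{k}',\frak{q}\cap\frak{p}']\subset\frak{q}\cap\frak{p}'$, while $\frak{q}'$ is visibly the sum of its $\frak{k}'$- and $\frak{p}'$-parts. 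Hence the entire content of (iii) is that this subalgebra is \emph{parabolic}, i.e.\ that $\frak{q}'=\bigoplus_{j\ge0}\frak{g}'(\ad x'';j)$ for some $x''\in\sqrt{-1}\frak{k}'_0$; as $\frak{q}'$ already has the shape of a $\theta$-stable ``Levi plus nilradical'' subalgebra, (iii) will ultimately follow from identifying a suitable polarizing element and a dimension count for $\frak{q}\cap\frak{p}'$.

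To set up the comparison I would fix $x\in\sqrt{-1}\frak{k}_0$ with $\frak{l}=\frak{g}^{x}$ and $\frak{u}=\bigoplus_{j>0}\frak{g}(\ad x;j)$ (so $\ad x$ is automatically semisimple with real eigenvalues), and put $x':=\pr_{\frak{g}\to\frak{g}'}(x)$. Since $\frak{k}_0$ is $\sigma$-stable we have $x'\in\sqrt{-1}\frak{k}'_0$, so $\ad x'$ is semisimple with real eigenvalues; the key compatibility is that $\ad(x')\,Y=\pr_{\frak{g}\to\frak{g}'}(\ad(x)\,Y)$ for $Y\in\frak{g}'$, because $[x-x',Y]\in\frak{g}^{-\sigma}$. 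Thus the $x'$-grading of $\frak{g}'$ is the image under $\pr_{\frak{g}\to\frak{g}'}$ of the $x$-grading of $\frak{g}$, but — and this is exactly where (iv) enters — the $x$-grading of $\frak{g}$ restricted to $\frak{g}'$ need not agree eigenspace-by-eigenspace with the $x'$-grading of $\frak{g}'$, since $\frak{q}$ (hence $x$) need not be $\sigma$-stable. Using the $\sigma$-openness $\frak{q}\cap\frak{k}+\frak{k}'=\frak{k}$ I would show that this discrepancy is governed precisely by whether $\pr_{\frak{g}\to\frak{g}'}(\frak{u}\cap\frak{p})$ consists of nilpotent elements, obtaining the dichotomy: if it does, then $\frak{q}\cap\frak{p}'$ coincides with the $\frak{p}'$-part of the $\theta$-stable parabolic $\frak{r}:=\bigoplus_{j\ge0}\frak{g}'(\ad x';j)$ and $\frak{q}'=\frak{r}$; if it does not, then $\frak{q}\cap\frak{p}'$ is strictly too small to be the $\frak{p}'$-part of any parabolic of $\frak{g}'$, so $\frak{q}'$ is not parabolic.

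Concretely, for (iii)$\Rightarrow$(iv), the easier half: if $\frak{q}'=\frak{l}'\oplus\frak{u}'$ is a $\theta$-stable parabolic then $\frak{u}'\cap\frak{p}'$ consists of nilpotents, and $\sigma$-openness together with the compatibility above forces $\pr_{\frak{g}\to\frak{g}'}(\frak{u}\cap\frak{p})\subset\frak{u}'\cap\frak{p}'$, hence (iv). For (iv)$\Rightarrow$(iii), I would argue that once $\pr_{\frak{g}\to\frak{g}'}(\frak{u}\cap\frak{p})\subset{\cal N}_{\frak{g}'}$ the space $\frak{q}\cap\frak{p}'$ is $\Ad(N_{\frak{k}'}(\frak{q}\cap\frak{p}'))$-stable and decomposes into a semisimple part and a nilpotent part, and that a generic element of the nilpotent part, shifted by a generic central element of the centralizer of the semisimple part, is an element $x''\in\sqrt{-1}\frak{k}'_0$ with $\bigoplus_{j\ge0}\frak{g}'(\ad x'';j)=\frak{q}'$; this amounts to the structure-theoretic fact that a nilpotent subspace of $\frak{p}'$ arising as such a projection, together with its normalizer in $\frak{k}'$, is the $\theta$-stable parabolic it should be, which I would deduce from Kostant--Rallis structure theory for the symmetric pair $(\frak{g}',\frak{k}')$. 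The main obstacle is precisely this last implication: one must build an honest $\theta$-stable parabolic subalgebra of $\frak{g}'$ out of the projected nilradical, and the delicate point is that $\pr_{\frak{g}\to\frak{g}'}$ does not preserve $\frak{q}$, so the $\sigma$-openness hypothesis has to be exploited exactly to guarantee that $\frak{q}\cap\frak{p}'$ has the full dimension of the $\frak{p}'$-part of a parabolic once the nilpotency condition (iv) is in force.
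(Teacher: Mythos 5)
The statement you were asked to prove is Fact~\ref{discdecomp}, whose content is the equivalence of (i), (ii), and (iv). Your proposal opens by \emph{invoking} Fact~\ref{discdecomp} to obtain exactly this equivalence and then declares that ``the theorem is reduced to proving (iii)$\Leftrightarrow$(iv).'' This is circular: you cannot take the statement you are proving as a hypothesis. Moreover, condition (iii) does not appear in Fact~\ref{discdecomp} at all — it appears only in Theorem~\ref{parab}. You have in effect written a sketch for Theorem~\ref{parab} (the equivalence of the geometric parabolic condition (iii) with (iv)), not for Fact~\ref{discdecomp}.

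In the paper, Fact~\ref{discdecomp} is not proved: it is a quotation of \cite[Theorem~4.2]{kob98ii}, which is Kobayashi's criterion for discrete decomposability of $A_\frak{q}(\lambda)|_{(\frak{g}',K')}$. That argument runs through asymptotic $K$-support and associated varieties, and none of it appears in your proposal. If the intent of the exercise was to reproduce this Fact, the correct blind answer is either to cite Kobayashi's theorem (noting that the ``Fact'' environment in this paper signals an imported result) or to reconstruct Kobayashi's micro-local/associated-variety argument; you did neither. Conversely, the linear-algebraic material you develop — choosing a grading element $x\in\sqrt{-1}\frak{k}_0$, projecting to $x'=\pr_{\frak{g}\to\frak{g}'}(x)$, and comparing the $x$- and $x'$-gradings — is relevant to Theorem~\ref{parab}, but the paper proves that equivalence differently, via the orthogonality Lemma~\ref{proj} and explicit manipulations with $(\frak{q}\cap\frak{p}')^{\perp\frak{p}'}$ and $N_{\frak{k}'}(\frak{q}\cap\frak{p}')$, rather than via a grading/Kostant--Rallis argument. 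Even there, your ``(iv)$\Rightarrow$(iii)'' half is left as a plan (``I would argue,'' ``which I would deduce'') without carrying out the delicate step you yourself identify; the paper handles exactly that step concretely by showing ${\frak{q}'}^{\perp\frak{g}'}$ is an isotropic solvable subalgebra contained in the nilradical of a Borel, hence $\frak{q}'$ contains a Borel.
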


We use the following lemma for the proof of Theorem~\ref{parab}.

\begin{lem}
\label{proj}
Let $V$ be a finite-dimensional vector space with
 a non-degenerate symmetric bilinear form.
For subspaces $V_1\subset V_2\subset V$, we denote by $V_1^{\perp V_2}$
 the set of all vectors in $V_2$ that are orthogonal to $V_1$.

Suppose that $X$ is a subspace of $V$ such that
 $V=X\oplus {X}^{\perp V}$.
Let $p$ be the projection onto $X$ along $X^{\perp V}$.
Then for any subspace $W\subset V$, it follows that
\[(W\cap X)^{\perp X}=p(W^{\perp V}).\]
\end{lem}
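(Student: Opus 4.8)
The plan is to prove the two inclusions $(W\cap X)^{\perp X}\subset p(W^{\perp V})$ and $p(W^{\perp V})\subset (W\cap X)^{\perp X}$ separately, exploiting the orthogonal decomposition $V=X\oplus X^{\perp V}$ throughout. The key structural fact I would record first is that, with respect to this decomposition, the form restricted to $X$ is non-degenerate (since a vector in $X$ orthogonal to all of $X$ would lie in $X\cap X^{\perp V}=0$), so that $\perp X$ taken inside $X$ behaves like a genuine orthogonal complement; I would also note that for $v\in V$ and $x\in X$ one has $\langle v,x\rangle=\langle p(v),x\rangle$, because $v-p(v)\in X^{\perp V}$ is orthogonal to $x$. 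This last identity is the workhorse of both inclusions.

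For $p(W^{\perp V})\subset (W\cap X)^{\perp X}$: take $w\in W^{\perp V}$ and $u\in W\cap X$. Then $\langle p(w),u\rangle=\langle w,u\rangle=0$ since $u\in W$ and $w\in W^{\perp V}$; as $p(w)\in X$ and $u$ ranges over $W\cap X$, this says $p(w)\in (W\cap X)^{\perp X}$.

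For the reverse inclusion $(W\cap X)^{\perp X}\subset p(W^{\perp V})$: let $x\in X$ with $x\perp W\cap X$. I would like to produce $w\in W^{\perp V}$ with $p(w)=x$, equivalently $w=x+y$ for some $y\in X^{\perp V}$ with $x+y\perp W$. Write $W=(W\cap X)\oplus W'$ for some complement, or better, argue by dimension count: the condition $x+y\perp W$ for $y\in X^{\perp V}$ is a system of linear conditions on $y$, one for each element of a basis of $W$; the conditions coming from $W\cap X$ are automatically satisfied since $x\perp W\cap X$ and $y\perp X\supset W\cap X$. The cleanest route is probably to compare dimensions directly: $p$ maps $W^{\perp V}$ onto a subspace of $(W\cap X)^{\perp X}$, and I would show this map is surjective by checking $\dim p(W^{\perp V})=\dim (W\cap X)^{\perp X}$. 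On one hand $\dim (W\cap X)^{\perp X}=\dim X-\dim(W\cap X)$ by non-degeneracy of the form on $X$. On the other hand, $\ker(p|_{W^{\perp V}})=W^{\perp V}\cap X^{\perp V}$; since $X^{\perp V}\subset (W\cap X)^{\perp V}$ is not quite what I want, I would instead compute $\dim W^{\perp V}=\dim V-\dim W$ and identify $\ker(p|_{W^{\perp V}})=W^{\perp V}\cap X^{\perp V}=(W+X)^{\perp V}$, which has dimension $\dim V-\dim(W+X)$. Hence $\dim p(W^{\perp V})=\dim(W+X)-\dim W=\dim X-\dim(W\cap X)$, matching the target dimension, so the inclusion from the previous paragraph is an equality and in particular surjective.

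The main obstacle is bookkeeping with the dimension formulas: one must be careful that $\dim W^{\perp V}=\dim V-\dim W$ and $\dim(W\cap X)^{\perp X}=\dim X-\dim(W\cap X)$ genuinely hold, which rests on non-degeneracy of the form on all of $V$ and on $X$ respectively, and that the kernel of $p$ restricted to $W^{\perp V}$ is exactly $(W+X)^{\perp V}$. Once these are in hand, the equality $\dim(W+X)-\dim W=\dim X-\dim(W\cap X)$ is just the standard inclusion–exclusion identity $\dim(W+X)+\dim(W\cap X)=\dim W+\dim X$, and the lemma follows by combining the easy inclusion with the dimension equality.
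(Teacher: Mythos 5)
Your argument is correct, but it takes a genuinely different route from the paper's. The paper's proof is a one-line chain of set-theoretic identities:
\[
(W\cap X)^{\perp X}=(W\cap X)^{\perp V}\cap X
=(W^{\perp V}+X^{\perp V})\cap X
=p(W^{\perp V}),
\]
where the middle equality is the standard fact that $(A\cap B)^{\perp V}=A^{\perp V}+B^{\perp V}$ for a non-degenerate form on a finite-dimensional space, and the last equality is read off directly from the decomposition $V=X\oplus X^{\perp V}$: an element $w+y$ with $w\in W^{\perp V}$, $y\in X^{\perp V}$ lies in $X$ exactly when it equals $p(w)$. By contrast, you prove two inclusions and close the gap with a dimension count, using the identities $\dim W^{\perp V}=\dim V-\dim W$, $\ker(p|_{W^{\perp V}})=(W+X)^{\perp V}$, and inclusion--exclusion. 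Both arguments ultimately rest on the same ingredient --- non-degeneracy of the form on $V$ and (hence) on $X$ --- but the paper deploys it through the duality $(A\cap B)^\perp=A^\perp+B^\perp$, which makes the proof an equality of sets at every step and avoids any counting, whereas your route makes the one easy inclusion explicit and then checks that no dimension is lost. Your version is more elementary in the sense that it does not invoke the $\cap$/$+$ duality, at the cost of more bookkeeping; the paper's is shorter and shows at a glance why each equality holds. One small hygiene point in your write-up: the remark ``since $X^{\perp V}\subset (W\cap X)^{\perp V}$ is not quite what I want'' is an aside that could be cut, since you end up abandoning that line and going straight to the dimension count; the rest of the argument is sound as stated.
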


\begin{proof}
We have
\begin{align*}
(W\cap X)^{\perp X}=(W\cap X)^{\perp {V}}\cap X 
=(W^{\perp {V}}+{X}^{\perp {V}})\cap X 
=p(W^{\perp V}),
\end{align*}
so the assertion is verified.
\end{proof}

\begin{proof}[Proof of Theorem~\ref{parab}]
First of all, $\frak{q}'$ defined in (iii) is a subalgebra of $\frak{g}$ 
because $[\frak{q}\cap \frak{p}',\frak{q}\cap \frak{p}']\subset
\frak{q}\cap \frak{k}'\subset N_{\frak{k}'}(\frak{q}\cap \frak{p}')$.

Choose an invariant symmetric bilinear form $\langle\cdot,\cdot\rangle$
on $\frak{g}$ such that the subspaces 
$\frak{k}', \frak{k}^{-\sigma}, \frak{p}'$, and
$\frak{p}^{-\sigma}$
are mutually orthogonal.
We use the letter ${}^\perp$ for orthogonal spaces with respect to 
$\langle\cdot,\cdot\rangle$ as in Lemma~\ref{proj}.

It is enough to prove the equivalence of  
(iii) and (iv)
by Fact~\ref{discdecomp}.

Assume that (iii) holds. 
The subspaces $\frak{u}=\frak{q}^{\perp {\frak{g}}}$ and
 $\frak{u}'={\frak{q}'}^{\perp {\frak{g}'}}$
 are the nilradicals of $\frak{q}$ and $\frak{q}'$, respectively.
Because $\frak{q}$ and $\frak{q}'$ are $\theta$-stable, we have
$(\frak{q}\cap\frak{p})^{\perp {\frak{p}}}=\frak{u}\cap\frak{p}$ and 
$(\frak{q}'\cap\frak{p}')^{\perp {\frak{p}'}}=\frak{u}'\cap\frak{p}'$.
In view of
 Lemma~\ref{proj} and $\frak{q}\cap \frak{p}'=\frak{q}'\cap \frak{p}'$,
 we get
\begin{align}
\label{proju}
\textstyle\pr_{\frak{g}\to\frak{g}'}(\frak{u}\cap\frak{p})
=\textstyle\pr_{\frak{g}\to\frak{g}'}((\frak{q}\cap\frak{p})^{\perp {\frak{p}}})
=(\frak{q}\cap\frak{p}')^{\perp {\frak{p}'}}
=(\frak{q}'\cap\frak{p}')^{\perp {\frak{p}'}}
=\frak{u}'\cap \frak{p}'.\nonumber
\end{align}
The right side is contained in ${\cal{N}}_{\frak{g}'}$.
This shows $({\rm{iv}})$.

Assume that $({\rm{iv}})$ holds.
As we have seen above, 
\[\textstyle\pr_{\frak{g}\to\frak{g}'}((\frak{q}\cap\frak{p})^{\perp {\frak{p}}})
=(\frak{q}\cap\frak{p}')^{\perp {\frak{p}'}}.\]
Since the vector space $(\frak{q}\cap\frak{p}')^{\perp {\frak{p}'}}$
 is contained in the nilpotent cone of $\frak{g}'$,
 the bilinear form $\langle \cdot, \cdot \rangle$ is zero on
 $(\frak{q}\cap\frak{p}')^{\perp {\frak{p}'}}$ and hence
 $(\frak{q}\cap \frak{p}')^{\perp {\frak{p}'}}\subset\frak{q}\cap \frak{p}'$.
Then it follows that
 $N_{\frak{k}'}(\frak{q}\cap\frak{p}')
=[(\frak{q}\cap \frak{p}'), 
(\frak{q}\cap \frak{p}')^{\perp {\frak{p}'}}]^{\perp {\frak{k}'}}$.
Indeed, for $x\in\frak{k}'$, 
\begin{align*}
x\in [(\frak{q}\cap \frak{p}'), 
(\frak{q}\cap \frak{p}')^{\perp {\frak{p}'}}]^{\perp {\frak{k}'}}
&\Leftrightarrow
\langle x,[(\frak{q}\cap \frak{p}'), 
(\frak{q}\cap \frak{p}')^{\perp {\frak{p}'}}]\rangle=\{0\} \\
&\Leftrightarrow
\langle [x, (\frak{q}\cap \frak{p}')], 
(\frak{q}\cap \frak{p}')^{\perp {\frak{p}'}}\rangle=\{0\} \\
&\Leftrightarrow
[x, (\frak{q}\cap \frak{p}')] \in \frak{q}\cap \frak{p}' \\
&\Leftrightarrow
x\in N_{\frak{k}'}(\frak{q}\cap \frak{p}').
\end{align*}
Put 
$\frak{q}':=N_{\frak{k}'}(\frak{q}\cap \frak{p}')
+(\frak{q}\cap\frak{p}')$.
Then 
\begin{align*}
{\frak{q}'}^{\perp {\frak{g}'}}
=N_{\frak{k}'}(\frak{q}\cap \frak{p}')^{\perp {\frak{k}'}}+
(\frak{q}\cap \frak{p}')^{\perp {\frak{p}'}}
=[(\frak{q}\cap \frak{p}'), 
(\frak{q}\cap \frak{p}')^{\perp {\frak{p}'}}]+
(\frak{q}\cap \frak{p}')^{\perp {\frak{p}'}}.
\end{align*}
Since 
$[(\frak{q}\cap \frak{p}'), 
(\frak{q}\cap \frak{p}')^{\perp {\frak{p}'}}]
\subset [(\frak{q}\cap \frak{p}'), 
(\frak{q}\cap \frak{p}')]
\subset N_{\frak{k}'}(\frak{q}\cap \frak{p}'),$
 we see that 
${\frak{q}'}^{\perp {\frak{g}'}}\subset \frak{q}'$.
We therefore have 
$\langle x, y\rangle=0$ for $x,y\in{{\frak{q}'}^{\perp {\frak{g}'}}}$.
Moreover, ${\frak{q}'}^{\perp {\frak{g}'}}$ is a subalgebra of $\frak{g}'$
 because 
\begin{align*}
\langle[{\frak{q}'}^{\perp {\frak{g}'}},{\frak{q}'}^{\perp {\frak{g}'}}],
\frak{q}'\rangle
=\langle {\frak{q}'}^{\perp {\frak{g}'}}, [{\frak{q}'}^{\perp {\frak{g}'}},
\frak{q}']\rangle 
\subset \langle {\frak{q}'}^{\perp {\frak{g}'}}, \frak{q}'\rangle=\{0\}.
\end{align*}
As a consequence, ${\frak{q}'}^{\perp {\frak{g}'}}$
 is a solvable Lie algebra and hence contained in some Borel subalgebra
 $\frak{b}'$ of $\frak{g}'$.
Write $\frak{n}'$ for the nilradical of $\frak{b}'$ so
 $\frak{n}'=\frak{b}'^{\perp {\frak{g}'}}$.
Let $M:=N_{K'}(\frak{q}\cap \frak{p}')$ be the normalizer of
 $\frak{q}\cap \frak{p}'$, which is an algebraic subgroup of $K'$.
Then $M$ has a Levi decomposition with reductive part $M_R$ and
 unipotent part $M_U$ (see \cite[\S VIII.4]{Hoch}).
If we denote by $\frak{m}_R$ and $\frak{m}_U$ the Lie algebras
 of $M_R$ and $M_U$, respectively, then
 the bilinear form $\langle\cdot,\cdot\rangle$ is non-degenerate
 on $\frak{m}_R$ and zero on $\frak{m}_U$.
We then conclude that the nilradical of
 $N_{\frak{k}'}(\frak{q}\cap \frak{p}')$ equals the radical of
 $N_{\frak{k}'}(\frak{q}\cap \frak{p}')$ with respect to the bilinear form.
As a result, 
$[(\frak{q}\cap \frak{p}'),
 (\frak{q}\cap \frak{p}')^{\perp {\frak{p}'}}]
 =N_{\frak{k}'}(\frak{q}\cap \frak{p}')^{\perp {\frak{k}'}}$
is the nilradical of $N_{\frak{k}'}(\frak{q}\cap \frak{p}')$
 and hence $[(\frak{q}\cap \frak{p}'),
 (\frak{q}\cap \frak{p}')^{\perp {\frak{p}'}}]\subset \frak{n}'$.
Since $(\frak{q}\cap \frak{p}')^{\perp {\frak{p}'}}\subset 
 {\cal N}_{\frak{g}'}\cap \frak{b}'=\frak{n}'$,
 it follows that 
$\frak{q}'^{\perp {\frak{g}'}}
 =[(\frak{q}\cap \frak{p}'), 
(\frak{q}\cap \frak{p}')^{\perp {\frak{p}'}}]+
(\frak{q}\cap \frak{p}')^{\perp {\frak{p}'}}\subset \frak{n}'$. 
Hence we see that
 $\frak{q}'\supset \frak{n}'^{\perp {\frak{g}'}}=\frak{b}'$
 and $\frak{q}'$ is a parabolic subalgebra of $\frak{g}'$,
 showing (iii).
\end{proof}

Retain the notation and the assumption of Theorem~\ref{parab} and
 suppose that the equivalent conditions in Theorem~\ref{parab}
 are satisfied.
Let ${\cal Q}$ be the set of all $\theta$-stable parabolic subalgebras
 $\frak{q}'_i$ of $\frak{g}'$ such that
 $\frak{q}'_i\cap \frak{p}' =\frak{q}\cap \frak{p}'$.
Then the parabolic subalgebra
$\frak{q}'=N_{\frak{k}'}(\frak{q}\cap\frak{p}')+(\frak{q}\cap\frak{p}')$
 given in Theorem~\ref{parab}
 is a unique maximal element of ${\cal Q}$.

On the other hand,
 a minimal element $\frak{q}''$ of ${\cal Q}$ is constructed as follows.
For the parabolic subalgebra $\frak{q}'$ defined above,
 put $\frak{l}'=\frak{q}'\cap \ovl{\frak{q}'}$, which is a Levi part of
 $\frak{q}'$.
The $\theta$-stable reductive subalgebra $\frak{l}'$ decomposes as
\[\frak{l}'=\bigoplus_{i\in I}\frak{l}'_i \oplus \frak{z}(\frak{l}'),\]
 where $\frak{l}'_i$ are simple Lie algebras and $\frak{z}(\frak{l}')$
 is the center of $\frak{l}'$. 
Put $I_c:=\{i\in I: \frak{l}'_i\subset \frak{k}'\}$ and define
\[\frak{l}'_c:= \bigoplus_{i\in I_c}\frak{l}'_i
 \oplus (\frak{z}(\frak{l}')\cap \frak{k}'),
\quad \frak{l}'_n:= \bigoplus_{i \not\in I_c}\frak{l}'_i
 \oplus (\frak{z}(\frak{l}')\cap \frak{p}').
\]
Then we have
\[\frak{l}'=\frak{l}'_c\oplus \frak{l}'_n, \quad
 \frak{l}'_n= [(\frak{l}'\cap \frak{p}'),(\frak{l}'\cap \frak{p}')]
 +\frak{l}'\cap \frak{p}', \quad
 \frak{l}'_c\subset \frak{k}'.\]
Take a Borel subalgebra $\frak{b}(\frak{l}'_c)$ of $\frak{l}'_c$ and
 define
\begin{align}
\label{defq''}
\frak{q}'':=\frak{b}(\frak{l}'_c)\oplus\frak{l}'_n\oplus\frak{u}'.
\end{align}

We claim that $\frak{q}''$ is a minimal element of ${\cal Q}$ and
 every minimal element is obtained in this way.
Indeed, since any element $\frak{q}'_i$
 of ${\cal Q}$ is contained in $\frak{q}'$, 
 the parabolic subalgebra $\frak{q}'_i$ 
 decomposes as $(\frak{q}'_i\cap \frak{l}')\oplus \frak{u}'$.
The condition $\frak{q}'_i\cap \frak{p}'=\frak{q}\cap \frak{p}'$
 implies that $\frak{q}'_i\supset\frak{l}'\cap \frak{p}'$
 and hence
 $\frak{q}'_i\supset \frak{l}'_n$.
As a consequence, the set ${\cal Q}$ consists of the Lie algebras
$\frak{q}(\frak{l}'_c)\oplus\frak{l}'_n\oplus\frak{u}'$
 for parabolic subalgebras $\frak{q}(\frak{l}'_c)$ of $\frak{l}'_c$.
Our claim follows from this.
In particular, a minimal element of ${\cal Q}$ is unique up
 to inner automorphisms of $\frak{l}'_c$.

We note here some observations on Lie algebras
 for later use.

\begin{lem}
\label{unisub}
Retain the notation and the assumption above.
Then 
\[\frak{q}\cap \frak{g}'=(\frak{q}\cap \frak{l}'_c)
 \oplus \frak{l}'_n \oplus \frak{u}',\]
and
\[[(\frak{l}'_n+\frak{u}'),\frak{g}]\subset \frak{q}+\frak{g}'.\]
\end{lem}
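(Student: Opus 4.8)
The plan is to work throughout with the invariant nondegenerate symmetric bilinear form $\langle\cdot,\cdot\rangle$ fixed in the proof of Theorem~\ref{parab}; recall it makes $\frak{k}',\frak{k}^{-\sigma},\frak{p}',\frak{p}^{-\sigma}$ mutually orthogonal, so that $\frak{g}^{-\sigma}=(\frak{g}')^{\perp \frak{g}}$ and $\langle\cdot,\cdot\rangle$ restricts nondegenerately to $\frak{k}$. I will also use $\frak{u}=\frak{q}^{\perp \frak{g}}$, the equality $\frak{q}\cap\frak{p}'=\frak{q}'\cap\frak{p}'$, and the relations established in the proof of Theorem~\ref{parab}: $(\frak{q}\cap\frak{p}')^{\perp \frak{p}'}=\frak{u}'\cap\frak{p}'$ and $\frak{u}'=[\frak{q}\cap\frak{p}',\,\frak{u}'\cap\frak{p}']+(\frak{u}'\cap\frak{p}')$, together with $\frak{l}'_n=[\frak{l}'\cap\frak{p}',\frak{l}'\cap\frak{p}']+(\frak{l}'\cap\frak{p}')$. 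First I would record two observations. (a) Adding the last three relations and using $\frak{l}'\cap\frak{p}'+\frak{u}'\cap\frak{p}'=\frak{q}'\cap\frak{p}'=\frak{q}\cap\frak{p}'$ yields $\frak{l}'_n+\frak{u}'=(\frak{q}\cap\frak{p}')+[\frak{q}\cap\frak{p}',\frak{q}\cap\frak{p}']$; in particular $\frak{l}'_n+\frak{u}'\subset\frak{q}$, because $\frak{q}\cap\frak{p}'\subset\frak{q}$ and $[\frak{q}\cap\frak{p}',\frak{q}\cap\frak{p}']\subset[\frak{q},\frak{q}]\subset\frak{q}$. (b) $\frak{q}$ is $\sigma$-open if and only if $\frak{u}\cap\frak{k}^{-\sigma}=0$: the intersection $\frak{q}\cap\frak{k}$ is the parabolic subalgebra of $\frak{k}$ cut out by a defining element $X\in\frak{k}$ of $\frak{q}$, with nilradical $\frak{u}\cap\frak{k}$, so $(\frak{q}\cap\frak{k})^{\perp \frak{k}}=\frak{u}\cap\frak{k}$, while $(\frak{k}')^{\perp \frak{k}}=\frak{k}^{-\sigma}$; taking orthogonal complements in $\frak{k}$ of the identity $\frak{q}\cap\frak{k}+\frak{k}'=\frak{k}$ shows it is equivalent to $(\frak{u}\cap\frak{k})\cap\frak{k}^{-\sigma}=0$. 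Consequently, under the standing hypotheses $\frak{u}\cap\frak{g}^{-\sigma}=\frak{u}\cap\frak{p}^{-\sigma}\subset\frak{p}$.

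To prove $\frak{q}\cap\frak{g}'=(\frak{q}\cap\frak{l}'_c)\oplus\frak{l}'_n\oplus\frak{u}'$ I would first check $\frak{q}\cap\frak{g}'\subset\frak{q}'$. Since $\frak{q}$ and $\frak{g}'$ are $\theta$-stable, $\frak{q}\cap\frak{g}'=(\frak{q}\cap\frak{k}')\oplus(\frak{q}\cap\frak{p}')$, and $\frak{q}\cap\frak{p}'=\frak{q}'\cap\frak{p}'\subset\frak{q}'$, so it is enough to see $\frak{q}\cap\frak{k}'\subset N_{\frak{k}'}(\frak{q}\cap\frak{p}')=\frak{q}'\cap\frak{k}'$; this holds because for $x\in\frak{q}\cap\frak{k}'$ the bracket $[x,\frak{q}\cap\frak{p}']$ lies in $\frak{q}$ ($\frak{q}$ being a subalgebra) and in $\frak{p}'$ ($x$ being in $\frak{k}'$), hence in $\frak{q}\cap\frak{p}'$. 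Now write $x\in\frak{q}\cap\frak{g}'$ as $x=x_c+x_n+x_u$ along $\frak{q}'=\frak{l}'_c\oplus\frak{l}'_n\oplus\frak{u}'$; by observation (a), $x_n,x_u\in\frak{q}$, so $x_c=x-x_n-x_u\in\frak{q}\cap\frak{l}'_c$, giving $\frak{q}\cap\frak{g}'\subset(\frak{q}\cap\frak{l}'_c)\oplus\frak{l}'_n\oplus\frak{u}'$. The reverse inclusion is immediate from $\frak{l}'_n\subset\frak{q}$, $\frak{u}'\subset\frak{q}$ and $\frak{q}\cap\frak{l}'_c\subset\frak{q}\cap\frak{g}'$.

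For the inclusion $[\frak{l}'_n+\frak{u}',\frak{g}]\subset\frak{q}+\frak{g}'$, observation (a) and the Jacobi identity reduce everything to proving $[\frak{q}\cap\frak{p}',\frak{g}]\subset\frak{q}+\frak{g}'$: granting this, $[[\frak{q}\cap\frak{p}',\frak{q}\cap\frak{p}'],\frak{g}]\subset[\frak{q}\cap\frak{p}',\,[\frak{q}\cap\frak{p}',\frak{g}]]\subset[\frak{q}\cap\frak{p}',\,\frak{q}+\frak{g}']\subset\frak{q}+\frak{g}'$, using $\frak{q}\cap\frak{p}'\subset\frak{q}\cap\frak{g}'$. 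Because $\frak{q}\cap\frak{p}'\subset\frak{q}\cap\frak{g}'$, the space $\frak{q}+\frak{g}'$ is $\ad(\frak{q}\cap\frak{p}')$-stable, so by invariance and nondegeneracy of $\langle\cdot,\cdot\rangle$ the inclusion $[\frak{q}\cap\frak{p}',\frak{g}]\subset\frak{q}+\frak{g}'$ is equivalent to $[\frak{q}\cap\frak{p}',\,(\frak{q}+\frak{g}')^{\perp \frak{g}}]=0$; and $(\frak{q}+\frak{g}')^{\perp \frak{g}}=\frak{q}^{\perp \frak{g}}\cap(\frak{g}')^{\perp \frak{g}}=\frak{u}\cap\frak{g}^{-\sigma}$. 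By observation (b) this equals $\frak{u}\cap\frak{p}^{-\sigma}$, so $[\frak{q}\cap\frak{p}',\,\frak{u}\cap\frak{g}^{-\sigma}]$ is contained in $[\frak{p},\frak{p}]\subset\frak{k}$, in $[\frak{g}',\frak{g}^{-\sigma}]\subset\frak{g}^{-\sigma}$, and in $[\frak{q},\frak{u}]\subset\frak{u}$ simultaneously, hence in $\frak{u}\cap\frak{k}\cap\frak{g}^{-\sigma}=\frak{u}\cap\frak{k}^{-\sigma}=0$. This proves the required inclusion.

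The step I expect to be the real obstacle is spotting the two reformulations on which everything hinges: that $\sigma$-openness is exactly the vanishing $\frak{u}\cap\frak{k}^{-\sigma}=0$, and that the target inclusion $[\frak{l}'_n+\frak{u}',\frak{g}]\subset\frak{q}+\frak{g}'$ is, through the invariant form, equivalent to the far simpler identity $[\frak{q}\cap\frak{p}',\,\frak{u}\cap\frak{g}^{-\sigma}]=0$. Once these are isolated, the remaining verifications are routine manipulations with the $\theta$- and $\sigma$-eigenspace decompositions.
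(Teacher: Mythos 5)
Your proof is correct, and your treatment of the first identity is essentially the same as the paper's: show $\frak{q}\cap\frak{g}'\subset\frak{q}'$ via $\frak{q}\cap\frak{k}'\subset N_{\frak{k}'}(\frak{q}\cap\frak{p}')$, then use the descriptions $\frak{u}'=[(\frak{q}\cap \frak{p}'),(\frak{u}'\cap\frak{p}')]+(\frak{u}'\cap\frak{p}')$ and $\frak{l}'_n=[(\frak{l}'\cap \frak{p}'),(\frak{l}'\cap \frak{p}')]+(\frak{l}'\cap \frak{p}')$ to pin the summands inside $\frak{q}$. For the second inclusion the architecture is also the same --- both you and the paper reduce via $\frak{l}'_n+\frak{u}'\subset (\frak{q}\cap\frak{p}')+[\frak{q}\cap\frak{p}',\frak{q}\cap\frak{p}']$ to the core inclusion $[\frak{q}\cap\frak{p}',\frak{g}]\subset\frak{q}+\frak{g}'$ --- but you establish that core inclusion differently. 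The paper argues directly: using $\sigma$-openness $\frak{k}=(\frak{q}\cap\frak{k})+\frak{k}'$, it splits $[\frak{q}\cap\frak{p}',\frak{k}]=[\frak{q}\cap\frak{p}',\frak{q}\cap\frak{k}]+[\frak{q}\cap\frak{p}',\frak{k}']\subset\frak{q}+\frak{g}'$, and notes $[\frak{q}\cap\frak{p}',\frak{p}]\subset\frak{k}\subset\frak{q}+\frak{g}'$. You instead dualize through the invariant form: compute $(\frak{q}+\frak{g}')^{\perp\frak{g}}=\frak{u}\cap\frak{g}^{-\sigma}$, reformulate $\sigma$-openness as $\frak{u}\cap\frak{k}^{-\sigma}=0$ (valid, via $(\frak{q}\cap\frak{k})^{\perp\frak{k}}=\frak{u}\cap\frak{k}$ and $(\frak{k}')^{\perp\frak{k}}=\frak{k}^{-\sigma}$), and then show $[\frak{q}\cap\frak{p}',\frak{u}\cap\frak{p}^{-\sigma}]$ lands in $\frak{u}\cap\frak{k}^{-\sigma}=0$. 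Both use the same $\sigma$-openness hypothesis, once in primal and once in dual form; the paper's version is a couple of lines shorter, while yours isolates the useful auxiliary fact $\frak{u}\cap\frak{k}^{-\sigma}=0$, which is a clean restatement of Definition~\ref{openparab}.
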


\begin{proof}
From $\frak{q}\cap \frak{k}'\subset N_{\frak{k}'}(\frak{q}\cap \frak{p}')$
 and $\frak{q}\cap \frak{p}'=\frak{q}'\cap \frak{p}'$, we have
 $\frak{q}\cap \frak{g}'\subset \frak{q}'$.
From the proof of Theorem~\ref{parab}, 
we have 
\begin{align*}
\frak{u}'=\frak{q}'^{\perp \frak{g}'}
&=[(\frak{q}\cap \frak{p}'),(\frak{q}\cap \frak{p}')^{\perp \frak{p}'}]
+(\frak{q}\cap \frak{p}')^{\perp \frak{p}'} \\
&\subset
[(\frak{q}\cap \frak{p}'),(\frak{q}\cap \frak{p}')]
+(\frak{q}\cap \frak{p}')
\subset \frak{q}\cap \frak{g}'.
\end{align*}
Moreover, $\frak{l}'_n 
 =[(\frak{l}'\cap \frak{p}'),(\frak{l}'\cap \frak{p}')]
 +(\frak{l}'\cap \frak{p}')$ and
 $\frak{l}'\cap \frak{p}'\subset \frak{q}'\cap \frak{p}'
 = \frak{q}\cap \frak{p}'$
imply that
 $\frak{l}'_n\subset \frak{q}\cap \frak{g}'$.
Hence
 $\frak{q}\cap \frak{g}'$ decomposes as
 $\frak{q}\cap \frak{g}'=(\frak{q}\cap \frak{l}'_c)
 \oplus \frak{l}'_n \oplus \frak{u}'$.

For the second assertion, we see that
 $[(\frak{q}\cap \frak{p}'),\frak{g}]\subset\frak{q}+\frak{g}'$.
Indeed, the assumption
 $(\frak{q}\cap \frak{k})+\frak{k}'=\frak{k}$ implies
 that
\[ [(\frak{q}\cap \frak{p}'),\frak{k}]
=[(\frak{q}\cap \frak{p}'),(\frak{q}\cap \frak{k})]
+[(\frak{q}\cap \frak{p}'),\frak{k}']
\subset \frak{q}+\frak{g}'\]
and $[(\frak{q}\cap \frak{p}'),\frak{p}]\subset
\frak{k}\subset\frak{q}+\frak{g}'$.
Hence $[(\frak{q}\cap \frak{p}'), \frak{g}]\subset \frak{q}+\frak{g}'$.
Then the inclusion $[\frak{u}',\frak{g}]\subset \frak{q}+\frak{g}'$
 follows from
$\frak{u}'\subset
[(\frak{q}\cap \frak{p}'),(\frak{q}\cap \frak{p}')]
+(\frak{q}\cap \frak{p}')$,
 and the inclusion $[\frak{l}'_n,\frak{g}]\subset \frak{q}+\frak{g}'$
 follows from
 $\frak{l}'_n 
 =[(\frak{l}'\cap \frak{p}'), (\frak{l}'\cap \frak{p}')]
 +(\frak{l}'\cap \frak{p}')$.
\end{proof}

%%%%%%%%%%%%%%%%%%%%%%%%%%%%%%%%%%%%%%%%%%%%%%%%%%%%%%%%%%%%%%%%
%%%%%%%%%%%%%%%%%%%%%%%%%%%%%%%%%%%%%%%%%%%%%%%%%%%%%%%%%%%%%%%%
%%%%%%%%%%%%%%%%%%%%%%%%%%%%%%%%%%%%%%%%%%%%%%%%%%%%%%%%%%%%%%%%
%%%%%%%%%%%%%%%%%%%%%%%%%%%%%%%%%%%%%%%%%%%%%%%%%%%%%%%%%%%%%%%%
%%%%%%%%%%%%%%%%%%%%%%%%%%%%%%%%%%%%%%%%%%%%%%%%%%%%%%%%%%%%%%%%

\section{Upper Bound on Branching Law}
\label{sec:branch}
We retain the notation of the previous section.

\begin{prop}
\label{square}
Suppose that the equivalent conditions in Theorem~\ref{parab} hold
 for a $\sigma$-open $\theta$-stable parabolic
 subalgebra $\frak{q}$ of $\frak{g}$.
Define $\frak{q}'$ as in Theorem~\ref{parab}
 and define $\ovl{Q'}$ as the parabolic subgroup of $G'$
 with Lie algebra $\ovl{\frak{q}'}$.
Then $\ovl{Q}\cap G'\subset \ovl{Q'}$.
Moreover, the following is a Cartesian square.
\begin{align*}
\xymatrix{
K'/(\ovl{Q}\cap K') \ar[r]^*+{i^o} \ar[d]
& G'/(\ovl{Q}\cap G') \ar[d]^*+{\pi} \\
K'/(\ovl{Q'}\cap K') \ar[r]^<<<<<<*+{i'}   & G'/\ovl{Q'}
}
\end{align*}
In particular, $i^o$ is a closed immersion.
\end{prop}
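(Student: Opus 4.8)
\medskip
\noindent\textbf{Proof plan.}
The plan is to reduce both assertions to Lie--algebra identities already established in Section~\ref{sec:constparab} and then to upgrade them to statements about groups by means of standard facts on parabolic subgroups. For the inclusion $\ovl{Q}\cap G'\subseteq\ovl{Q'}$, I would first note that complex conjugation with respect to $\frak{g}_0$ preserves $\frak{k},\frak{k}',\frak{p}'$, the projection $\pr_{\frak{g}\to\frak{g}'}$ and the nilpotent cone of $\frak{g}'$, so that $\ovl{\frak{q}}$ is again a $\sigma$--open $\theta$--stable parabolic subalgebra satisfying the equivalent conditions of Theorem~\ref{parab}, and the parabolic of $\frak{g}'$ attached to $\ovl{\frak{q}}$ in that theorem is exactly $\ovl{\frak{q}'}$, since $\ovl{N_{\frak{k}'}(\frak{q}\cap\frak{p}')}=N_{\frak{k}'}(\ovl{\frak{q}}\cap\frak{p}')$. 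Then Lemma~\ref{unisub} applied to $\ovl{\frak{q}}$ gives $\ovl{\frak{q}}\cap\frak{g}'\subseteq\ovl{\frak{q}'}$, i.e.\ the Lie algebra of $\ovl{Q}\cap G'$ lies in $\ovl{\frak{q}'}$; hence the orbit of the point $[\ovl{\frak{q}'}]$ of $G'/\ovl{Q'}$ under $(\ovl{Q}\cap G')^{\circ}$ has trivial tangent space, so is a single point, giving $(\ovl{Q}\cap G')^{\circ}\subseteq\ovl{Q'}$; verifying that $\ovl{Q}\cap G'$ is connected (equivalently, that the finite orbit $(\ovl{Q}\cap G')\cdot[\ovl{\frak{q}'}]$ is a point) then yields $\ovl{Q}\cap G'\subseteq\ovl{Q'}$, and in particular $\ovl{Q}\cap K'\subseteq\ovl{Q'}\cap K'$.

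Granting this, all four maps in the square are well defined and the square commutes, so there is an induced $K'$--equivariant morphism
\[
\Phi\colon K'/(\ovl{Q}\cap K')\longrightarrow
\mathcal{P}:=K'/(\ovl{Q'}\cap K')\times_{G'/\ovl{Q'}}G'/(\ovl{Q}\cap G')
\]
sending the base point to $\bigl(e(\ovl{Q'}\cap K'),\,e(\ovl{Q}\cap G')\bigr)$. Because $K'$ acts transitively on the source, it is enough to show that $K'$ is transitive on $\mathcal{P}$ and that the two base--point stabilizers coincide. The stabilizer in $K'$ of $\bigl(e(\ovl{Q'}\cap K'),e(\ovl{Q}\cap G')\bigr)$ equals $K'\cap\ovl{Q'}\cap\ovl{Q}=\ovl{Q}\cap K'$ (using the first part), which is also the stabilizer of the base point of $K'/(\ovl{Q}\cap K')$; and, after translating the first coordinate, transitivity of $K'$ on $\mathcal{P}$ is equivalent to transitivity of $\ovl{Q'}\cap K'$ on the fibre $\pi^{-1}(e\ovl{Q'})=\ovl{Q'}/(\ovl{Q}\cap G')$, that is, to the group identity $\ovl{Q'}=(\ovl{Q'}\cap K')(\ovl{Q}\cap G')$.

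On Lie algebras the latter identity is immediate: $\ovl{\frak{q}'}=(\ovl{\frak{q}'}\cap\frak{k}')\oplus(\ovl{\frak{q}'}\cap\frak{p}')$ while $\ovl{\frak{q}'}\cap\frak{p}'=\ovl{\frak{q}}\cap\frak{p}'\subseteq\ovl{\frak{q}}\cap\frak{g}'$, so $\ovl{\frak{q}'}=(\ovl{\frak{q}'}\cap\frak{k}')+(\ovl{\frak{q}}\cap\frak{g}')$. Thus the multiplication map $(\ovl{Q'}\cap K')\times(\ovl{Q}\cap G')\to\ovl{Q'}$ is submersive at the identity and its image is dense open in the connected group $\ovl{Q'}$; reducing modulo the unipotent radical $\ovl{U'}$ (which lies in $\ovl{Q}\cap G'$ since $\ovl{\frak{u}'}\subseteq\ovl{\frak{q}}\cap\frak{g}'$ by Lemma~\ref{unisub}), a closedness argument for the orbit of an essentially reductive group on the affine variety $\ovl{Q'}/(\ovl{Q}\cap G')$ then promotes this density to the desired equality. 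With these identities in hand, $\Phi$ is a $K'$--equivariant map between homogeneous $K'$--spaces with equal isotropy, hence an isomorphism, so the square is Cartesian. Finally, $i'$ is the inclusion of the $K'$--orbit through the $\theta$--stable parabolic subalgebra $\ovl{\frak{q}'}$ in the flag variety $G'/\ovl{Q'}$, which is a closed orbit; hence $i'$ is a closed immersion, and therefore so is its base change $i^{o}$.

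The main obstacle will be precisely the passage from these straightforward Lie--algebra identities to their group counterparts $\ovl{Q}\cap G'\subseteq\ovl{Q'}$ and $\ovl{Q'}=(\ovl{Q'}\cap K')(\ovl{Q}\cap G')$, which requires controlling connected components and the closedness of the relevant product sets; here the explicit decomposition $\ovl{\frak{q}}\cap\frak{g}'=(\ovl{\frak{q}}\cap\ovl{\frak{l}'_c})\oplus\ovl{\frak{l}'_n}\oplus\ovl{\frak{u}'}$ from Lemma~\ref{unisub}, together with the containment $\ovl{U'}\subseteq\ovl{Q}\cap G'$, is the essential structural ingredient.
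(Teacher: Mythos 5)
Your overall strategy runs along the same outline as the paper's: both arguments reduce the Cartesian-square claim to the group identity $\ovl{Q'}=(\ovl{Q'}\cap K')\cdot(\ovl{Q}\cap G')$, and both rely on the Lie-algebra decomposition in Lemma~\ref{unisub}. The difference lies in how you pass from Lie-algebra inclusions to group statements, and this is where two genuine gaps appear.

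First, for $\ovl{Q}\cap G'\subseteq\ovl{Q'}$: you correctly observe that $\ovl{\frak{q}}\cap\frak{g}'\subseteq\ovl{\frak{q}'}$ gives $(\ovl{Q}\cap G')^{\circ}\subseteq\ovl{Q'}$, and then you flag, but do not resolve, the question of whether $\ovl{Q}\cap G'$ is connected. This is not a minor loose end: $\ovl{Q}$ and $G'$ are each connected but their intersection need not be, and the orbit $(\ovl{Q}\cap G')\cdot[\ovl{\frak{q}'}]$ being discrete does not by itself force it to be a single point. The paper sidesteps this entirely by using that a parabolic subgroup is self-normalizing: to show $g\in\ovl{Q'}$ it is enough to show $\Ad(g)$ stabilizes $\ovl{\frak{q}'}$, and this is deduced for every $g\in\ovl{Q}\cap G'$ (not just for the identity component) from $\Ad(g)(\ovl{\frak{q}}\cap\frak{g}')=\ovl{\frak{q}}\cap\frak{g}'$, the containment $\ovl{\frak{u}'}\subseteq\ovl{\frak{q}}\cap\frak{g}'$, and a short lemma that an inner automorphism mapping the nilradical of a parabolic into that parabolic must actually fix the parabolic. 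Some such pointwise argument is needed; the connectedness route is not obviously available.

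Second, for the identity $\ovl{Q'}=(\ovl{Q'}\cap K')(\ovl{Q}\cap G')$: the Lie-algebra computation $\ovl{\frak{q}'}=(\ovl{\frak{q}'}\cap\frak{k}')+(\ovl{\frak{q}}\cap\frak{g}')$ is correct and does give an open dense image of the multiplication map, but your proposed closedness argument is based on a false premise. After dividing out $\ovl{U'}\subseteq\ovl{Q}\cap G'$, the quotient $\ovl{Q'}/(\ovl{Q}\cap G')$ is a partial flag variety of the factor $\frak{l}'_c$ of the Levi, hence \emph{projective}, not affine, and the group $\ovl{Q'}\cap K'$ acting is not reductive. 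So ``closed orbit of a reductive group on an affine variety'' does not apply. (One could salvage the conclusion by noting $\frak{l}'_c\subseteq\frak{k}'$, so the complex reductive subgroup with Lie algebra $\frak{l}'_c$ sits inside $\ovl{Q'}\cap K'$ and is already transitive on this partial flag variety; but that is a different argument from the one you sketch.) The paper instead writes every element of the connected $\theta$-stable group $\ovl{Q'}$ as a product of elements of $\ovl{Q'}\cap K'$ and exponentials $\exp(x)$ with $x\in\ovl{\frak{q}'}\cap\frak{p}'=\ovl{\frak{q}}\cap\frak{p}'$, commutes the $K'$-factors to the left via $\exp(x)k=k\exp(\Ad(k^{-1})x)$, and concludes directly. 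You should either adopt that explicit factorization, or replace your affine/closedness step by the observation about $\frak{l}'_c\subseteq\frak{k}'$.
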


\begin{proof}
Let $g\in \ovl{Q}\cap G'$.
To see $g\in \ovl{Q'}$, it enough to show that $\Ad(g)$ normalizes
 $\ovl{\frak{q}'}$ because $\ovl{Q'}$ is self-normalizing.
By Lemma~\ref{unisub},
 $\ovl{\frak{u}'}\subset \Bar{\frak{q}}\cap \frak{g}' \subset \ovl{\frak{q}'}$.
Therefore,
 $\Ad(g)(\Bar{\frak{q}}\cap \frak{g}')=\Bar{\frak{q}}\cap \frak{g}'$
 implies that
 $\Ad(g)\ovl{\frak{u}'}\subset \ovl{\frak{q}'}$.
Then $\Ad(g)\ovl{\frak{q}'}\subset \ovl{\frak{q}'}$ follows from
 the lemma below:
\begin{lem}
Let $\frak{g}$ be a reductive Lie algebra and
 $\frak{q}$ a parabolic subalgebra.
If $\phi(\frak{u})\subset \frak{q}$ for the nilradical
 $\frak{u}$ of $\frak{q}$ and
 an inner automorphism $\phi\in \Int(\frak{g})$,
 then $\phi(\frak{q})=\frak{q}$.
\end{lem}
\begin{proof}
There exists a Cartan subalgebra $\frak{h}$ of $\frak{g}$ contained in
 both $\frak{q}$ and $\phi(\frak{q})$.
Our assumption amounts to the inclusion of the sets of $\frak{h}$-roots
 $\Delta(\phi(\frak{u}), \frak{h})\subset \Delta(\frak{q},\frak{h})$.
Write $\frak{l}$ for the Levi part of $\frak{q}$ containing $\frak{h}$.
Then 
\[
\Delta(\phi(\frak{q}),\frak{h})\cap \Delta(\frak{q},\frak{h})
=\Delta(\phi(\frak{u}),\frak{h})
\cup (\Delta(\phi(\frak{l}),\frak{h}) \cap \Delta(\frak{q},\frak{h})).
\]
As a result, $\phi(\frak{q})\cap \frak{q}$ is a parabolic subalgebra
 of $\frak{g}$.
In particular, $\phi(\frak{q})$ and $\frak{q}$ have a common
 Borel subalgebra.
Since $\phi$ is inner, this implies that $\phi(\frak{q})=\frak{q}$.
\end{proof}

Returning to the proof of Proposition~\ref{square},
 we now prove that the diagram is a Cartesian square.
This is equivalent to
 that $\ovl{Q'}=(\ovl{Q'}\cap K')\cdot(\ovl{Q}\cap G')$.
The inclusion $\ovl{Q'}\supset(\ovl{Q'}\cap K')\cdot(\ovl{Q}\cap G')$
 follows from $\ovl{Q'}\supset (\ovl{Q}\cap G')$.
Since $\ovl{Q'}$ is connected and $\theta$-stable, it is 
 generated by $\ovl{Q'}\cap K'$ and
 $\exp (\ovl{\frak{q}'}\cap \frak{p}')$ as a group.
For $k\in \ovl{Q'}\cap K'$ and $x\in \ovl{\frak{q}'}\cap \frak{p}'$,
 we have $\exp(x)k=k \exp (\Ad(k^{-1})x)$ and
 $\Ad(k^{-1})x\in \ovl{\frak{q}'}\cap \frak{p}'$.
Using this equation iteratively, we can write any element
 of $\ovl{Q'}$ as $k \exp(x_1)\cdots \exp(x_n)$ for
 $k\in \ovl{Q'}\cap K'$ and
 $x_1,\dots,x_n\in \ovl{\frak{q}'}\cap \frak{p}'$.
Then $\ovl{\frak{q}'}\cap \frak{p}'=\ovl{\frak{q}}\cap \frak{p}'$
 implies that $\exp(x_1)\cdots\exp(x_n)\in \ovl{Q}\cap G'$.
Hence $\ovl{Q'}\subset(\ovl{Q'}\cap K')\cdot(\ovl{Q}\cap G')$ as required.
\end{proof}

Now we consider the restriction
 $A_\frak{q}(\lambda)|_{(\frak{g'},K')}$.
We assume that $\lambda$ is linear, so
 the $(\frak{l},L\cap K)$-action on $\bb{C}_\lambda$ can 
 be uniquely extended to an $L$-action or a $\ovl{Q}$-action.

Define
\[V^p:=\bigwedge^{\rm top}(\frak{g}/(\Bar{\frak{q}}+\frak{g}'))
 \otimes S^p(\frak{g}/(\Bar{\frak{q}}+\frak{g}'))\]
regarded as a $(\ovl{Q}\cap G')$-module by the adjoint action 
and define
\[W^p:= \operatorname{Ind}_{\ovl{Q}\cap G'}^{\ovl{Q'}}
 (\bb{C}_\lambda|_{\ovl{Q}\cap G'}\otimes V^p).
\]
By Lemma~\ref{unisub}, 
 the unipotent radical ${\ovl{U'}}$ of ${\ovl{Q'}}$
 is contained in $\ovl{Q}\cap G'$ 
 and ${\ovl{U'}}$ acts trivially on
 $\bb{C}_\lambda|_{\ovl{Q}\cap G'}\otimes V^p$.
Therefore, ${\ovl{U'}}$ acts trivially on $W^p$. 
Then
 $W^p$ is written as a direct sum of
 irreducible finite-dimensional ${\ovl{Q'}}$-modules
 and ${\ovl{U'}}$ acts trivially on all the irreducible
 components.
As an $L'$-module, we have
\[
W^p \simeq \operatorname{Ind}_{\ovl{Q}\cap L'}^{L'}
 (\bb{C}_\lambda|_{\ovl{Q}\cap L'}\otimes V^p).
\]

\begin{thm}
\label{branchup1}
Let $(G_0,G'_0)$ be a symmetric pair of connected real
 linear reductive Lie groups
 defined by an involution $\sigma$.
Let $\frak{q}$ be a $\sigma$-open $\theta$-stable parabolic subalgebra
 of $\frak{g}$.
Suppose that $A_\frak{q}(\lambda)$ is nonzero and discretely decomposable
 as a $(\frak{g}',K')$-module with $\lambda$ linear, unitary,
 and in the weakly fair range.
Define 
\begin{align*}
\frak{q}':=N_{\frak{k}'}(\frak{q}\cap \frak{p}')+(\frak{q}\cap \frak{p}'),
\end{align*}
and 
\[
W^p:= \operatorname{Ind}_{\ovl{Q}\cap G'}^{\ovl{Q'}}
 \left(\bb{C}_\lambda|_{\ovl{Q}\cap G'}\otimes 
 \bigwedge^{\rm top}(\frak{g}/(\Bar{\frak{q}}+\frak{g}'))
 \otimes S^p(\frak{g}/(\Bar{\frak{q}}+\frak{g}'))\right).
\]
Then there exists an injective homomorphism of
 $(\frak{g}',K')$-modules
\begin{align}
\label{eqn:brinj1}
A_\frak{q}(\lambda)\to
 \bigoplus_{p=0}^{\infty} 
 (\Pi_{L'\cap K'}^{K'})_{s'}
 \bigl(U(\frak{g}')\otimes_{U(\ovl{\frak{q}'})}
 (W^p \otimes \bb{C}_{2\rho(\frak{u}')})\bigr)
\end{align}
for $s'= \dim (\frak{u}'\cap \frak{k}')$.
\end{thm}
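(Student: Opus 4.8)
The plan is to reduce \eqref{eqn:brinj1} to a multiplicity estimate. Since $\lambda$ is unitary and in the weakly fair range, $A_\frak q(\lambda)$ is unitarizable by Fact~\ref{aqlam}; being in addition discretely decomposable as a $(\frak g',K')$-module, it is an algebraic direct sum of irreducible $(\frak g',K')$-modules (see the remark following the definition of discrete decomposability). Write $B_p:=(\Pi_{L'\cap K'}^{K'})_{s'}(U(\frak g')\otimes_{U(\ovl{\frak q'})}(W^p\otimes\bb C_{2\rho(\frak u')}))$ for the $p$-th summand of the target. A semisimple module admits an injective homomorphism into any module in which each irreducible occurs with at least its own multiplicity, and such a homomorphism may be chosen $(\frak g',K')$-linear by treating each isotypic component separately. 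Hence it suffices to prove
\[
[A_\frak q(\lambda):V]\ \le\ \sum_{p=0}^{\infty}[B_p:V]
\]
for every irreducible $(\frak g',K')$-module $V$.

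For the estimate I would argue geometrically. By Theorem~\ref{locZuc} (case $i=0$), $A_\frak q(\lambda)\simeq\Gamma(X,\mathcal M)$ as $(\frak g,K)$-modules, where $X=G/\ovl Q$, $Y=K/(\ovl Q\cap K)$ with closed immersion $i:Y\to X$, $\mathcal V_X$ is the $G$-equivariant $\mathcal O_X$-module associated with the $\ovl Q$-module $\bb C_\lambda$ (linearity of $\lambda$), and $\mathcal M:=i_+\mathcal O_Y\otimes_{\mathcal O_X}\mathcal V_X$ is supported on $Y$; likewise, applying Theorem~\ref{locZuc} to $\frak g'$ gives $B_p\simeq\Gamma(X',i'_+\mathcal O_{Y'}\otimes_{\mathcal O_{X'}}\mathcal W^p_{X'})$, where $X'=G'/\ovl{Q'}$, $Y'=K'/(\ovl{Q'}\cap K')$, $i':Y'\to X'$, and $\mathcal W^p_{X'}$ is associated with the $\ovl{Q'}$-module $W^p$ (well defined because $\ovl{U'}$ acts trivially on $W^p$ by Lemma~\ref{unisub}). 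Now let $Z:=G'\cdot o\subset X$ be the $G'$-orbit of $o=e\ovl Q$, so $Z\cong X'':=G'/(\ovl Q\cap G')$, a smooth locally closed $G'$-subvariety with normal space $\frak g/(\ovl{\frak q}+\frak g')$ at $o$. By $\sigma$-openness — which, since $\ovl{\frak q\cap\frak k}=\ovl{\frak q}\cap\frak k$, also yields $\ovl{\frak q}\cap\frak k+\frak k'=\frak k$ — the $K'$-orbit $Y'':=K'\cdot o\cong K'/(\ovl Q\cap K')$ is a dense open subset of the irreducible variety $Y$; moreover $\frak k\subset\ovl{\frak q}+\frak g'$, and by Proposition~\ref{square} the inclusion $i^o:Y''\to X''$ is closed and fits into the Cartesian square there, whose horizontal fibration $\pi:X''\to X'$ is proper and whose map $\varpi:Y''\to Y'$ has projective homogeneous fibres.

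Next I would filter $\mathcal M$ by normal degree along $Z$ (intersecting, if necessary, with the normal-degree filtration along $Y$ to stay inside coherent sheaves) and apply Lemma~\ref{fil} to the composite $Y''\subset Z\subset X$: over a suitable open neighbourhood $\Omega$ of $Y''$ with $Y\cap\Omega=Y''$, one has $\mathcal M|_\Omega\simeq (i_Z)_+\mathcal N$ for the $\mathcal D_Z$-module $\mathcal N=(i^o)_+\mathcal O_{Y''}\otimes(\text{twist of }\mathcal V_X)$, and Lemma~\ref{fil} identifies the $p$-th graded piece, $K'$-equivariantly, with the $\mathcal D_{X''}$-module $i^o_+\mathcal O_{Y''}\otimes_{\mathcal O_{X''}}\mathcal V^p_{X''}$, where $\mathcal V^p_{X''}$ is associated with the $(\ovl Q\cap G')$-module $\bb C_\lambda|_{\ovl Q\cap G'}\otimes\bigwedge^{\rm top}(\frak g/(\ovl{\frak q}+\frak g'))\otimes S^p(\frak g/(\ovl{\frak q}+\frak g'))$, the top exterior factor coming from the relative canonical sheaves as in the definition of $i_+$ (and the $2\rho$-twists reshuffling exactly as in the proof of Theorem~\ref{locZuc}). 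Pushing forward along $\pi$ and using Lemma~\ref{push}, the projection formula, the Cartesian square of Proposition~\ref{square}, and $\varpi_+\mathcal O_{Y''}=\mathcal O_{Y'}$, one gets $\pi_+(i^o_+\mathcal O_{Y''}\otimes\mathcal V^p_{X''})\simeq i'_+\mathcal O_{Y'}\otimes\mathcal W^p_{X'}$. The filtration of $A_\frak q(\lambda)=\Gamma(X,\mathcal M)$ by the images of $\Gamma(X,F_p\mathcal M)$ is exhaustive ($\Gamma$ commutes with the relevant filtered colimit) and, by left-exactness of $\Gamma$, its successive quotients inject into $\Gamma(X,\mathrm{gr}_p\mathcal M)$; since $\mathrm{gr}_p\mathcal M$ agrees with $i^o_+\mathcal O_{Y''}\otimes\mathcal V^p_{X''}$ near the dense open $Y''$ and a global section is determined by its restriction there, $\Gamma(X,\mathrm{gr}_p\mathcal M)$ embeds into $\Gamma(X'',i^o_+\mathcal O_{Y''}\otimes\mathcal V^p_{X''})$, which maps to $\Gamma(X',\pi_+(i^o_+\mathcal O_{Y''}\otimes\mathcal V^p_{X''}))=B_p$ — all of these maps being $(\frak g',K')$-equivariant because $Z$ is a $G'$-orbit, $\pi$ is $G'$-equivariant, and the isomorphisms of Theorem~\ref{locZuc} and Lemma~\ref{fil} are equivariant. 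Adding up composition multiplicities yields the desired inequality, and hence \eqref{eqn:brinj1}.

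I expect the main obstacle to be the geometry of the third paragraph: the orbit $Z$ is merely locally closed and its stabiliser $\ovl Q\cap G'$ is non-reductive, so one must make the normal-degree filtration along $Z$ precise as an excess-intersection refinement of Lemma~\ref{fil}, control any torsion of $\mathrm{gr}_p\mathcal M$ supported on the lower-dimensional locus $Y\setminus Y''$ so that the embedding $\Gamma(X,\mathrm{gr}_p\mathcal M)\hookrightarrow B_p$ genuinely holds, and track the $\bigwedge^{\rm top}$- and $2\rho$-twists through $\pi_+$ so that the target is literally $(\Pi_{L'\cap K'}^{K'})_{s'}(U(\frak g')\otimes_{U(\ovl{\frak q'})}(W^p\otimes\bb C_{2\rho(\frak u')}))$.
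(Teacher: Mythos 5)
Your overall strategy matches the paper's: localize via Theorem~\ref{locZuc}, filter by normal degree along the $G'$-orbit, identify graded pieces by Lemma~\ref{fil}, split using unitarity and discrete decomposability, push forward along $\pi$, and recognize the result via Theorem~\ref{locZuc} on the $\frak g'$ side. The reduction to a multiplicity bound in your first paragraph is harmless --- the paper produces the injection directly, but since the splitting step already invokes semisimplicity both routes coincide.

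Where your argument genuinely differs --- and where the gap you flagged at the end is real --- is in the \emph{order} of filtration and restriction. You propose filtering $\mathcal M=i_+\mathcal O_Y\otimes\mathcal L_{\lambda,X}$ on all of $X$ by normal degree along the (merely locally closed) orbit $Z\cong X^o$, and then passing to $\Gamma(X,\mathrm{gr}_p\mathcal M)$. But $\mathrm{gr}_p\mathcal M$ is only an $\mathcal O_X$-module, not a $\mathcal D_X$-module, so the support argument that makes global sections determined by their values over $Y^o$ is not available to it; sections of $\mathrm{gr}_p\mathcal M$ supported on $Y\setminus Y^o$ are a genuine threat, and near $\overline{X^o}\setminus X^o$ the normal-degree filtration need not even be exhaustive. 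The paper avoids this entirely by doing things in the opposite order: it first observes that the restriction map
$r:\Gamma(X,\mathcal M)\to\Gamma(X^o,j^{-1}\mathcal M)$
is \emph{injective}, because a $K$-finite global section of $\mathcal M$ is supported on $Y$ and $Y^o$ is open dense in $Y$; it then sets
$F_pA_\frak q(\lambda):=r^{-1}\bigl(\Gamma(X^o,F_pj^{-1}\mathcal M)\bigr),$
a filtration defined purely over the smooth open $X^o$, whose graded pieces automatically inject into $\Gamma(X^o,\mathrm{gr}_pj^{-1}\mathcal M)$. One never has to say anything about $\mathrm{gr}_p\mathcal M$ globally on $X$, nor control torsion over $Y\setminus Y^o$, nor worry about the singularities of $\overline{Z}$. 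If you reorganize your argument to restrict first (via $r$) and filter second, the rest of your outline --- base change along the Cartesian square of Proposition~\ref{square}, projection formula, Lemma~\ref{push} to identify the pushforward of $\mathcal L_{\lambda,X^o}\otimes\mathcal V^p_{X^o}$ with $\mathcal W^p_{X'}$, and Theorem~\ref{locZuc} to land in the cohomologically induced module --- goes through as written.

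One smaller imprecision: your claim that $\varpi_+\mathcal O_{Y^o}=\mathcal O_{Y'}$ and that $\pi$ is proper is not how the paper proceeds; it instead uses (underived) base change $i^o_+\mathcal O_{Y^o}\simeq\pi^*i'_+\mathcal O_{Y'}$ for the smooth morphism $\pi$, followed by the projection formula and Lemma~\ref{push}, sidestepping properness of $\pi$ and higher direct images.
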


\begin{proof}
Suppose that $A_\frak{q}(\lambda)$ is
 nonzero and
 discretely decomposable
 as a $(\frak{g}',K')$-module with $\lambda$ linear, unitary,
 and in the weakly
 fair range.
Let $\ovl{Q}$, $G'$, and $K'$
 be the connected subgroups of $G$
 with Lie algebras $\Bar{\frak{q}}$, $\frak{g}'$ and $\frak{k}'$, respectively.
We set 
\begin{align*}
&X=G/\ovl{Q},\quad 
X^o=G'/(\ovl{Q}\cap G'),\quad \\
&Y=K/(\ovl{Q}\cap K), \quad 
Y^o=K'/(\ovl{Q}\cap K'),\\
&\xymatrix{
Y \ar[r]^*+{i}  & X \\
Y^o \ar[u]^*+{j_K}   \ar[r]^*+{i^o} & X^o \ar[u]_*+{j}
}
\end{align*}
where the maps $i^o,i, j$, and $j_K$ are the inclusion maps.
The map $j_K$ is an open immersion because $\frak{q}$ is
 $\sigma$-open.
By Lemma~\ref{square}, $i^o$ is a closed immersion and hence
 $i(Y)\cap j(X^o)=i(j_K(Y^o))$.

Let ${\cal L}_{\lambda,X}$ be the 
 ${\cal O}_X$-module
 associated with the $\ovl{Q}$-module $\bb{C}_\lambda$ 
 as in Section~\ref{sec:homog}.
Then Theorem~\ref{locZuc} says 
 $\Gamma(X,i_+{\cal O}_{Y}
 \otimes_{{\cal O}_X}{\cal L}_{\lambda,X})$
 is isomorphic to $A_\frak{q}(\lambda)$
 as a $(\frak{g},K)$-module.
We see that
\[j^{-1}i_+{\cal O}_{Y}
\simeq j^{-1}(j\circ i^o)_+ {\cal O}_{Y^o}
\simeq j^{-1}j_+ (i^o_+ {\cal O}_{Y^o}).\]

Let $\{F_p {\cal D}_X\}_{p\geq 0}$
 be the filtration by normal degree with respect to $j$.
This induces a filtration
 $\{F_p j^{-1}i_+{\cal O}_{Y}\}$ on
 $j^{-1} i_+ {\cal O}_{Y}$ and a filtration
$\{F_p j^{-1}(i_+{\cal O}_{Y} \otimes_{{\cal O}_X}
  {\cal L}_{\lambda,X})\}$ on
 $j^{-1} (i_+ {\cal O}_{Y}\otimes_{{\cal O}_X} {\cal L}_{\lambda,X})$. 
Applying Lemma~\ref{fil} for ${\cal M}=i^o_+ {\cal O}_{Y^o}$,
 we have isomorphisms of ${\cal O}_{X^o}$-modules
\begin{align*}
F_p j^{-1}i_+{\cal O}_{Y}
/F_{p-1} j^{-1}i_+{\cal O}_{Y}
&\simeq
F_p j^{-1}j_+ (i^o_+ {\cal O}_{Y^o})
/F_{p-1} j^{-1}j_+ (i^o_+ {\cal O}_{Y^o})\\
&\simeq
(i^o_+ {\cal O}_{Y^o})\otimes_{{\cal O}_{X^o}} 
\Omega_{X/X^o}\spcheck\otimes_{{\cal O}_{X^o}}
j^{-1}({\cal I}_{X^o}^p/{\cal I}_{X^o}^{p+1})\spcheck,
\end{align*}
which commute with the actions of $\frak{g}'$ and $K'$.
The $G'$-equivariant ${\cal O}_{X^o}$-module
$\Omega_{X/X^o}\spcheck\otimes_{{\cal O}_{X^o}}
j^{-1}({\cal I}_{X^o}^p/{\cal I}_{X^o}^{p+1})\spcheck$
 is isomorphic to the ${\cal O}_{X^o}$-module ${\cal V}_{X^o}^p$
 associated with the $(\ovl{Q'}\cap G')$-module 
\[
V^p:=\bigwedge^{\rm top}(\frak{g}/(\Bar{\frak{q}}+\frak{g}'))
\otimes S^p(\frak{g}/(\Bar{\frak{q}}+\frak{g}')).
\]
We write ${\cal L}_{\lambda,X^o}$
 for the ${\cal O}_{X^o}$-module associated with
 $\bb{C}_\lambda|_{\ovl{Q}\cap G'}$.
Then $j^*{\cal L}_{\lambda,X}\simeq {\cal L}_{\lambda,X^o}$.
As a result, we get an isomorphism
\begin{align}
\label{eqn:fil}
&F_p j^{-1}(i_+{\cal O}_Y\otimes_{{\cal O}_X} {\cal L}_{\lambda,X})
/F_{p-1} j^{-1}(i_+{\cal O}_Y
 \otimes_{{\cal O}_X} {\cal L}_{\lambda,X})\\ \nonumber
\simeq\ &i^o_+{\cal O}_{Y^o} \otimes_{{\cal O}_{X^o}} 
{\cal L}_{\lambda,X^o}\otimes_{{\cal O}_{X^o}} {\cal V}_{X^o}^p.
\end{align}

Since any section
 $m\in \Gamma(X, i_+{\cal O}_Y\otimes_{{\cal O}_X} {\cal L}_{\lambda,X})$
 is $K$-finite, 
 the support of $m$ is $Y$ unless $m=0$.
Therefore, the restriction map
\[r:
\Gamma(X, i_+{\cal O}_Y\otimes_{{\cal O}_X} {\cal L}_{\lambda,X})
\to \Gamma(X^o, j^{-1}(i_+{\cal O}_Y\otimes_{{\cal O}_X}
 {\cal L}_{\lambda,X}))
\]
is injective.
Define the filtration $\{F_p A_\frak{q}(\lambda)\}$
 of the $(\frak{g}',K')$-module $A_\frak{q}(\lambda)$
 by 
\[F_p A_\frak{q}(\lambda):= 
r^{-1}\Gamma(X^o, F_p j^{-1}(i_+{\cal O}_Y\otimes_{{\cal O}_X}
 {\cal L}_{\lambda,X}))\]
for 
\[r:
A_\frak{q}(\lambda)\simeq 
\Gamma(X, i_+{\cal O}_Y\otimes_{{\cal O}_X} {\cal L}_{\lambda,X})
\to \Gamma(X^o, j^{-1}(i_+{\cal O}_Y\otimes_{{\cal O}_X}
 {\cal L}_{\lambda,X})).
\]
The induced map
\begin{align*}
&F_p A_\frak{q}(\lambda)/F_{p-1} A_\frak{q}(\lambda) \\
\to\ & \Gamma(X^o, F_p j^{-1}(i_+{\cal O}_Y\otimes_{{\cal O}_X}
 {\cal L}_{\lambda,X}))
\,/\,\Gamma(X^o, F_{p-1} j^{-1}(i_+{\cal O}_Y\otimes_{{\cal O}_X}
 {\cal L}_{\lambda,X})).
\end{align*}
is injective.
The unitarizability and the discrete decomposability
 of $A_\frak{q}(\lambda)$
 imply that there exists an isomorphism of
 the $(\frak{g}',K')$-modules
\[A_\frak{q}(\lambda)\simeq
 \bigoplus_{p=0}^{\infty}
 F_p A_\frak{q}(\lambda)/F_{p-1} A_\frak{q}(\lambda).
\]
Consequently, we obtain injective maps of $(\frak{g}',K')$-modules
\begin{align}
\label{eqn:gr}
 &A_\frak{q}(\lambda)
\simeq\bigoplus_{p=0}^{\infty}
 F_p A_\frak{q}(\lambda)/F_{p-1} A_\frak{q}(\lambda)\\ \nonumber
\to\ &\bigoplus_{p=0}^{\infty}
 \Gamma(X^o, F_p j^{-1}(i_+{\cal O}_Y\otimes_{{\cal O}_X}
  {\cal L}_{\lambda,X}))
\,/\,\Gamma(X^o, F_{p-1} j^{-1}(i_+{\cal O}_Y\otimes_{{\cal O}_X}
 {\cal L}_{\lambda,X}))
 \\ \nonumber
\to\ &\bigoplus_{p=0}^{\infty}
 \Gamma\bigl(X^o, F_p j^{-1}(i_+{\cal O}_Y\otimes_{{\cal O}_X}
 {\cal L}_{\lambda,X})
 /F_{p-1} j^{-1}(i_+{\cal O}_Y\otimes_{{\cal O}_X} {\cal L}_{\lambda,X})\bigr).
\end{align}
The injectivity of the last map follows from
 the left exactness of the functor $\Gamma(X^o,\cdot\,)$.

We set 
\begin{align*}
&X'=G'/\ovl{Q'}, \quad 
Y'=K'/(\ovl{Q'}\cap K'), \\
&\xymatrix{
Y^o \ar[r]^*+{i^o} \ar[d]_*+{\pi_K} 
& X^o \ar[d]^*+{\pi}\\
Y'   \ar[r]^*+{i'}   & X'
}
\end{align*}
where the maps in the commutative diagram
 are defined canonically.
Since the diagram is a Cartesian square by Lemma~\ref{square}
 and $\pi, \pi_K$ are smooth morphisms,
 the base change formula
 gives isomorphisms of
 ${\cal{D}}_{X^o}$-modules
\[
i^o_+{\cal O}_{Y^o}\simeq i^o_+ \pi_K^*{\cal O}_{Y'}
\simeq \pi^*i'_+{\cal O}_{Y'}.
\]
Then the projection formula gives the 
following isomorphisms of ${\cal O}_{X'}$-modules 
\begin{align*}
\pi_*(i^o_+{\cal O}_{Y^o} \otimes_{{\cal O}_{X^o}}
{\cal L}_{\lambda, X^o}
\otimes_{{\cal O}_{X^o}} {\cal V}^p_{X^o}) 
&\simeq 
 \pi_*(\pi^*i'_+{\cal O}_{Y'}\otimes_{{\cal O}_{X^o}}
 {\cal L}_{\lambda, X^o}
\otimes_{{\cal O}_{X^o}} {\cal V}^p_{X^o}) \\
&\simeq 
 i'_+{\cal O}_{Y'}\otimes_{{\cal O}_{X'}}
\pi_*({\cal L}_{\lambda,X^o}
\otimes_{{\cal O}_{X^o}} {\cal V}^p_{X^o}),
\end{align*}
which commute with the actions of $\frak{g}'$ and $K'$.
Put $S:= \ovl{Q'}/(\ovl{Q}\cap G')$.
By Lemma~\ref{push},
 $\pi_*({\cal L}_{\lambda,X^o}\otimes_{{\cal O}_{X^o}} {\cal V}^p_{X^o})$
 is isomorphic to
 the ${\cal O}_{X'}$-module ${\cal W}^p_{X'}$ 
 associated with
 the $\ovl{Q'}$-module
 $W^p:=\Gamma(S, {\cal V}^p_S)$, or equivalently
\[
W^p:=\operatorname{Ind}_{\ovl{Q}\cap G'}^{\ovl{Q'}}
 \left(\bb{C}_\lambda|_{\ovl{Q}\cap G'}\otimes
 \bigwedge^{\rm top}(\frak{g}/(\Bar{\frak{q}}+\frak{g}'))
 \otimes
 S^p(\frak{g}/(\Bar{\frak{q}}+\frak{g}'))\right).
\]
Therefore, 
\begin{align}
\label{eqn:push}
&\Gamma(X^o, i^o_+{\cal O}_{Y^o}\otimes_{{\cal O}_{X^o}}
 {\cal L}_{\lambda,X^o} 
 \otimes_{{\cal O}_{X^o}} {\cal V}^p_{X^o})\\ \nonumber
\simeq\ &
\Gamma(X', i'_+{\cal O}_{Y'}\otimes_{{\cal O}_{X'}}
 \pi_*({\cal L}_{\lambda,X^o} 
 \otimes_{{\cal O}_{X^o}} {\cal V}^p_{X^o}))\\ \nonumber
\simeq\ &
\Gamma(X', i'_+{\cal O}_{Y'}\otimes_{{\cal O}_{X'}}
 {\cal W}^p_{X'}).
\end{align}

Combining
\eqref{eqn:fil}, \eqref{eqn:gr}, and \eqref{eqn:push}, 
we obtain an injective $(\frak{g}',K')$-homomorphism
\begin{align}
\label{eqn:injloc}
A_\frak{q}(\lambda)\to
\bigoplus_{p=0}^\infty
\Gamma(X', i'_+{\cal O}_{Y'}\otimes_{{\cal O}_{X'}}
 {\cal W}^p_{X'}).
\end{align}
Finally, Theorem~\ref{locZuc} gives an isomorphism
\[
\Gamma(X', i'_+{\cal O}_{Y'}\otimes_{{\cal O}_{X'}}
 {\cal W}^p_{X'})
\simeq 
(\Pi_{L'\cap K'}^{K'})_{s'}
(U(\frak{g}')\otimes_{U(\ovl{\frak{q}'})} 
(W^p\otimes\bb{C}_{2\rho(\frak{u}')})),
\]
so we have completed the proof.
\end{proof}

Let $\frak{q}''$ be the $\theta$-stable parabolic subalgebra
 of $\frak{g}'$ defined by \eqref{defq''}.
In what follows, we show that
 the right side of \eqref{eqn:brinj1} can be written as
 the direct sum of $(\frak{g}',K')$-modules $A_{\frak{q}''}(\lambda')$.

Let $L''_0:=N_{G'_0}(\ovl{\frak{q}''})$ be the normalizer of 
 $\ovl{\frak{q}''}$ in $G'_0$.
The complexified Lie algebra $\frak{l}''$ decomposes as
 $\frak{l}''=(\frak{l}''\cap \frak{l}'_c)\oplus\frak{l}'_n$.
Then $\frak{h}'_c:=\frak{l}''\cap \frak{l}'_c$
 is a Cartan subalgebra of $\frak{l}'_c$.
The center $\frak{z}(\frak{l}'')$ of $\frak{l}''$ decomposes as
\[\frak{z}(\frak{l}'')=
\frak{h}'_c\oplus
(\frak{z}(\frak{l}'')\cap \frak{l}'_n).
\]
Write $\lambda'=\lambda'_c+\lambda'_n$ for the
 corresponding decomposition of $\lambda'\in\frak{z}(\frak{l}'')^*$.
We take $\Delta(\frak{b}(\frak{l}'_c),\frak{h}'_c)$
 as a positive root system of 
 $\Delta(\frak{l}'_c,\frak{h}'_c)$.
If $\lambda'_c\in(\frak{h}'_c)^*$ is dominant integral
 for $\Delta(\frak{b}(\frak{l}'_c),\frak{h}'_c)$,
 write $F(\lambda'_c)$ for the irreducible
 finite-dimensional representation
 of $\frak{l}'_c$ with highest weight $\lambda'_c$.

Let $\Lambda$ be the set consisting of
 $\lambda'=\lambda'_c+\lambda'_n\in\frak{z}(\frak{l}'')^*$
 such that
\begin{itemize}
\item
$\lambda'$ is linear,
\item
$\lambda'_c$
 is dominant for
 $\Delta(\frak{b}(\frak{l}'_c),\frak{h}'_c)$, and 
\item
$\lambda'_n=0$.
\end{itemize}
For $\lambda'\in \Lambda$, define
 the representation $F(\lambda')$
 of $\frak{l}'=\frak{l}'_c\oplus\frak{l}'_n$ by
 the exterior tensor product of $F(\lambda'_c)$ and the trivial representation
 of $\frak{l}'_n$:
\[
F(\lambda'):=F(\lambda'_c)\boxtimes \bb{C}.
\]
Since $\lambda'$ is linear, 
$F(\lambda')$ lifts to a representation of $L'$.
Define
\begin{align}
\label{defmult}
m(\lambda', p):=
\dim {\Hom}_{\ovl{Q}\cap L'}
\left( F(\lambda'),\, \bb{C}_\lambda|_{\ovl{Q}\cap G'}\otimes
\bigwedge^{\rm top}(\frak{g}/(\Bar{\frak{q}}+\frak{g}'))
\otimes S^p(\frak{g}/(\Bar{\frak{q}}+\frak{g}'))\right).
\end{align}

\begin{thm}
\label{branchup2}
Let the notation and the assumption be as in Theorem~\ref{branchup1}.
Define $\frak{q}''$ as in \eqref{defq''} and define
 $\Lambda$, $m(\lambda',p)$ as above.
Then there exists an injective homomorphism of
 $(\frak{g}',K')$-modules
\begin{align}
\label{eqn:brinj2}
A_\frak{q}(\lambda)\to
 \bigoplus_{p=0}^{\infty} 
 \bigoplus_{\lambda'\in\Lambda}
 A_{\frak{q}''}(\lambda')^{\oplus m(\lambda'\!,\,p)}.
\end{align}
\end{thm}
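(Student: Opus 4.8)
The plan is to deduce Theorem~\ref{branchup2} from Theorem~\ref{branchup1} by decomposing each summand on the right-hand side of \eqref{eqn:brinj1} into a direct sum of modules of the form $A_{\frak{q}''}(\lambda')$. The starting point is the observation, already recorded in Section~\ref{sec:branch}, that $\ovl{U'}$ acts trivially on $W^p$, so $W^p$ is a direct sum of irreducible finite-dimensional $\ovl{Q'}$-modules on which $\ovl{U'}$ acts trivially; as an $L'$-module, $W^p\simeq \operatorname{Ind}_{\ovl{Q}\cap L'}^{L'}(\bb{C}_\lambda|_{\ovl{Q}\cap L'}\otimes V^p)$. Since $\lambda$ is linear, each such irreducible constituent lifts to $L'$, hence to $\ovl{Q'}$. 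The key point is then to identify which irreducible $L'$-modules can occur: because $\frak{l}'=\frak{l}'_c\oplus\frak{l}'_n$ with $\frak{l}'_n$ acting through a one-dimensional (in fact, by linearity and the structure of $\frak{l}'_n$, trivial) character, every irreducible $L'$-constituent of $W^p$ is of the form $F(\lambda')=F(\lambda'_c)\boxtimes\bb{C}$ for some $\lambda'\in\Lambda$, appearing with multiplicity $m(\lambda',p)=\dim\Hom_{\ovl{Q}\cap L'}(F(\lambda'),\bb{C}_\lambda|_{\ovl{Q}\cap G'}\otimes V^p)$ by Frobenius reciprocity applied to the induction from $\ovl{Q}\cap L'$ to $L'$. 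I would need to check carefully that $\lambda'_n=0$ is forced: this should follow from linearity of $\lambda$ together with the fact (implicit in Lemma~\ref{unisub} and Remark~\ref{pzero}) that $\frak{z}(\frak{l}'')\cap\frak{l}'_n$ pairs trivially against the relevant characters, or more directly from the decomposition $\frak{l}'_n=[\frak{l}'\cap\frak{p}',\frak{l}'\cap\frak{p}']+\frak{l}'\cap\frak{p}'$ and the unitarity-plus-linearity constraint in Remark~\ref{pzero}.

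Next I would pass from a decomposition of $W^p$ as a $\ovl{Q'}$-module to a decomposition of the cohomologically induced module. The functor $V\mapsto(\Pi_{L'\cap K'}^{K'})_{s'}(U(\frak{g}')\otimes_{U(\ovl{\frak{q}'})}(V\otimes\bb{C}_{2\rho(\frak{u}')}))$ is additive, so
\[
(\Pi_{L'\cap K'}^{K'})_{s'}\bigl(U(\frak{g}')\otimes_{U(\ovl{\frak{q}'})}(W^p\otimes\bb{C}_{2\rho(\frak{u}')})\bigr)
\simeq
\bigoplus_{\lambda'\in\Lambda}\Bigl((\Pi_{L'\cap K'}^{K'})_{s'}\bigl(U(\frak{g}')\otimes_{U(\ovl{\frak{q}'})}(F(\lambda')\otimes\bb{C}_{2\rho(\frak{u}')})\bigr)\Bigr)^{\oplus m(\lambda',p)}.
\]
Here the $\ovl{Q'}$-module structure on $F(\lambda')$ is the one where $\ovl{U'}$ acts trivially and $L'$ acts as above. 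It then remains to identify each summand with $A_{\frak{q}''}(\lambda')$. For this I would invoke the standard fact (``induction in stages'' for cohomological induction, as in \cite[\S V.3]{KnVo}) that induction from $\ovl{\frak{q}'}$ of a module on which $L'$ acts by a finite-dimensional representation $F(\lambda')$ with $\ovl{U'}$ trivial agrees, after applying the Bernstein functor, with induction from $\ovl{\frak{q}''}$ of the one-dimensional module $\bb{C}_{\lambda'}$, up to the appropriate $\rho$-shift; since $\frak{q}''=\frak{b}(\frak{l}'_c)\oplus\frak{l}'_n\oplus\frak{u}'$ has Levi $\frak{l}''$ with $\frak{l}''\cap\frak{l}'_c=\frak{h}'_c$ a Cartan of $\frak{l}'_c$, the finite-dimensional $F(\lambda'_c)$ of $\frak{l}'_c$ is itself cohomologically induced from $\bb{C}_{\lambda'_c}$ on $\frak{b}(\frak{l}'_c)$ by Borel--Weil--Bott, and bracketing this with Theorem~\ref{locZuc} (or the abstract double-induction formula) yields $A_{\frak{q}''}(\lambda')$. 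I would also need to verify that the $\rho$-shifts and the trivial $\frak{l}'_n$-action combine correctly so that no extra character is introduced, i.e. that the parameter on $\frak{z}(\frak{l}'')$ is exactly $\lambda'$.

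Assembling the pieces: composing the injection \eqref{eqn:brinj1} with the identification of its right-hand side as $\bigoplus_{p}\bigoplus_{\lambda'\in\Lambda}A_{\frak{q}''}(\lambda')^{\oplus m(\lambda',p)}$ produces the desired injective $(\frak{g}',K')$-homomorphism \eqref{eqn:brinj2}, and reordering the double sum is harmless. I expect the main obstacle to be the clean verification of the ``induction in stages'' identification $(\Pi_{L'\cap K'}^{K'})_{s'}(U(\frak{g}')\otimes_{U(\ovl{\frak{q}'})}(F(\lambda')\otimes\bb{C}_{2\rho(\frak{u}')}))\simeq A_{\frak{q}''}(\lambda')$ with all $\rho$-shifts in place --- in particular keeping track of $2\rho(\frak{u}')$ versus $2\rho(\frak{u}'')$ and the contribution of $\frak{b}(\frak{l}'_c)$ --- together with the bookkeeping showing that $\Lambda$, as defined via the conditions that $\lambda'$ be linear, $\lambda'_c$ be $\Delta(\frak{b}(\frak{l}'_c),\frak{h}'_c)$-dominant, and $\lambda'_n=0$, is exactly the index set of the $L'$-isotypic components of $\bigoplus_p W^p$ that survive. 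The rest (additivity of the functors, Frobenius reciprocity, reindexing) is formal.
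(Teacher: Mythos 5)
Your decomposition of $W^p$ as an $L'$-module agrees precisely with \eqref{l'iso} in the paper's proof: the triviality of the $\frak{l}'_n$-action on $W^p$ (via Remark~\ref{pzero} for $\bb{C}_\lambda$ and Lemma~\ref{unisub} for $\frak{g}/(\Bar{\frak{q}}+\frak{g}')$), the resulting restriction to constituents $F(\lambda')$ with $\lambda'\in\Lambda$, and the extraction of $m(\lambda',p)$ by Frobenius reciprocity are exactly as in the paper. Where you diverge is in passing from the right-hand side of \eqref{eqn:brinj1} to the direct sum of $A_{\frak{q}''}(\lambda')$'s. The paper does not invoke an abstract transitivity theorem for cohomological induction; it instead returns to the intermediate localization \eqref{eqn:injloc} from the proof of Theorem~\ref{branchup1}, sets up a second Cartesian square over $\pi':X''=G'/\ovl{Q''}\to X'=G'/\ovl{Q'}$, and computes $\pi'_*({\cal L}_{\lambda',X''})\simeq{\cal F}(\lambda')_{X'}$ by the (compact) Borel--Weil theorem; the base change and projection formulas together with Theorem~\ref{locZuc} then identify $\Gamma(X',i'_+{\cal O}_{Y'}\otimes_{{\cal O}_{X'}}{\cal F}(\lambda')_{X'})$ with $A_{\frak{q}''}(\lambda')$. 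The advantage of this geometric route is that it completely sidesteps the $\rho$-shift bookkeeping you flag ($2\rho(\frak{u}')$ versus $2\rho(\frak{u}'')$, the contribution of $\frak{n}(\frak{l}'_c)$, the degree shift $s''-s'=\dim\frak{n}(\frak{l}'_c)$ for the inner Borel--Weil stage): all of that is absorbed automatically into the ${\cal D}$-module push-forwards once Theorem~\ref{locZuc} is available. Your algebraic ``induction in stages'' alternative can be made to work --- the inner induction from $\ovl{\frak{b}(\frak{l}'_c)}$ to $\frak{l}'_c\oplus\frak{l}'_n$ concentrates in degree $\dim\frak{n}(\frak{l}'_c)$ by compact Borel--Weil for dominant $\lambda'_c$, so the composition-of-functors spectral sequence degenerates trivially --- but you would still need to pin down the normalization exactly, verifying that $2\rho(\frak{u}'')|_{\frak{z}(\frak{l}'')}=2\rho(\frak{n}(\frak{l}'_c))+2\rho(\frak{u}')|_{\frak{z}(\frak{l}'')}$ and that the two stages compose to give the parameter $\lambda'$ with no spurious twist; since you have already established Theorem~\ref{locZuc}, the geometric argument is both shorter and less error-prone.
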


\begin{proof}
We use the notation of the proof of Theorem~\ref{branchup1}.
In light of \eqref{eqn:injloc},
 it is enough to show that
\begin{align}
\label{qqiso}
\Gamma(X', i'_+{\cal O}_{Y'}\otimes_{{\cal O}_{X'}}
 {\cal W}^p_{X'})
\simeq
\bigoplus_{\lambda'\in\Lambda}
 A_{\frak{q}''}(\lambda')^{\oplus m(\lambda'\!,\,p)}.
\end{align}

Let us prove that 
\begin{align}
\label{l'iso}
W^p 
\simeq \bigoplus_{\lambda'\in \Lambda}
 F(\lambda')^{\oplus m(\lambda'\!,\,p)}
\end{align}
as $L'$-modules.
Let $F$ be an irreducible finite-dimensional $L'$-module such that
 $\Hom_{L'} (F, W^p)\neq 0$.
Then the Frobenius reciprocity shows
$\Hom_{\ovl{Q}\cap L'} (F,\,\bb{C}_\lambda\otimes V^p)\neq 0$.
Since $L'$ is connected, $F$ is irreducible as an $\frak{l}'$-module.
Hence the $\frak{l}'$-module $F$
 is written as the exterior tensor product $F_c \boxtimes F_n$
 for an irreducible $\frak{l}'_c$-module $F_c$ and
 an irreducible $\frak{l}'_n$-module $F_n$.
Since $\lambda$ is linear and unitary,
 Remark~\ref{pzero} implies that
 $\Bar{\frak{q}}\cap \frak{p}$ acts by zero on $\bb{C}_\lambda$.
Hence $\frak{l}'_n$ also acts by zero on $\bb{C}_\lambda$.
Moreover, Lemma~\ref{unisub} implies that 
 $\frak{l}'_n$ acts by zero on $\frak{g}/(\Bar{\frak{q}}+\frak{g}')$.
Therefore, $\frak{l}'_n$ acts by zero on $W^p$. 
As a consequence, $F_n$ must be the trivial representation
 and $F\simeq F(\lambda')$ for some $\lambda'\in\Lambda$.
Then the Frobenius reciprocity gives
\[m(\lambda',p):=
\dim{\Hom}_{\ovl{Q}\cap L'} (F(\lambda'),\bb{C}_\lambda\otimes V^p)=
\dim{\Hom}_{L'} 
(F(\lambda'),\, W^p),
\]
and hence \eqref{l'iso} is proved.

We set 
\begin{align*}
&X''=G'/\ovl{Q''}, \quad 
Y''=K'/(\ovl{Q''}\cap K'), \\
&\xymatrix{
Y'' \ar[r]^*+{i''} \ar[d]_*+{} 
& X'' \ar[d]^*+{\pi'}\\
Y'   \ar[r]^*+{i'}   & X'
}
\end{align*}
where the maps are defined canonically.
By the same argument as in the proof of Lemma~\ref{square},
 we can prove that this diagram is a Cartesian square.
Take $\lambda'\in \Lambda$ 
and write ${\cal L}_{\lambda',X''}$ for
 the ${\cal O}_{X''}$-module associated with
 the $\ovl{Q''}$-module $\bb{C}_{\lambda'}$.
Theorem~\ref{locZuc} shows that
\begin{align}
\label{qloc}
A_{\frak{q}''}(\lambda')\simeq
\Gamma(X'', i''_+{\cal O}_{Y''}
\otimes_{{\cal O}_{X''}} {\cal L}_{\lambda',X''}).
\end{align}
As in the proof of Theorem~\ref{branchup1},
 we see that
\[
\pi'_*(i''_+{\cal O}_{Y''}
 \otimes_{{\cal O}_{X''}} {\cal L}_{\lambda',X''})
\simeq
i'_+{\cal O}_{Y'}\otimes_{{\cal O}_{X'}}
\pi'_*({\cal L}_{\lambda',X''}).
\]
Put $S':= \ovl{Q'}/\ovl{Q''}$ and
 write ${\cal L}_{\lambda',S'}$ for the ${\cal O}_{S'}$-module
 associated with $\bb{C}_{\lambda'}$.
The decompositions 
\[
\ovl{\frak{q}'}=\frak{l}'_c\oplus\frak{l}'_n
 \oplus\ovl{\frak{u}'},\quad
\ovl{\frak{q}''}=\frak{b}(\frak{l}'_c)\oplus\frak{l}'_n
 \oplus\ovl{\frak{u}'}
\]
 show that $S'$ is isomorphic to
 the complete flag variety of the reductive Lie algebra
 $\frak{l}'_c$.
Hence by the Borel--Weil theorem,
 $\Gamma(S', {\cal L}_{\lambda',S'})\simeq F(\lambda')$.
Then it follows from Lemma~\ref{push} that
\begin{align*}
\pi'_*({\cal L}_{\lambda',X''})\simeq 
 {\cal F}(\lambda')_{X'},
\end{align*} 
where
 ${\cal F}(\lambda')_{X'}$ is the ${\cal O}_{X'}$-module
 associated with the $\ovl{Q'}$-module $F(\lambda')$.
As a consequence,
 we have
\begin{align}
\label{gloiso}
\Gamma(X'', i''_+{\cal O}_{Y''}
 \otimes_{{\cal O}_{X''}} {\cal L}_{\lambda',X''})
&\simeq
\Gamma(X', \pi'_*(i''_+{\cal O}_{Y''}
 \otimes_{{\cal O}_{X''}} {\cal L}_{\lambda',X''}))\\ \nonumber
&\simeq
\Gamma(X', i'_+{{\cal O}_{Y'}}\otimes_{{\cal O}_{X'}}
 {\cal F}(\lambda')_{X'}).
\end{align}
The isomorphism \eqref{qqiso} follows from
 \eqref{l'iso}, \eqref{qloc}, and \eqref{gloiso}.
\end{proof}

\begin{rem}
{\rm
On the right side of \eqref{eqn:brinj2},
 $\lambda'$ may not be in the weakly fair range
 even if $m(\lambda',p)>0$.}
\end{rem}

%%%%%%%%%%%%%%%%%%%%%%%%%%%%%%%%%%%%%%%%%%%%%%%%%%%%%%%%%%%%%%%%%%
%%%%%%%%%%%%%%%%%%%%%%%%%%%%%%%%%%%%%%%%%%%%%%%%%%%%%%%%%%%%%%%%%%
%%%%%%%%%%%%%%%%%%%%%%%%%%%%%%%%%%%%%%%%%%%%%%%%%%%%%%%%%%%%%%%%%%
%%%%%%%%%%%%%%%%%%%%%%%%%%%%%%%%%%%%%%%%%%%%%%%%%%%%%%%%%%%%%%%%%%

\section{Associated Varieties}
\label{sec:ass}
As a corollary to Theorem~\ref{branchup2},
 we determine the associated variety of $(\frak{g}',K')$-modules
 that occur in $A_\frak{q}(\lambda)|_{(\frak{g}',K')}$.

For a finitely generated $\frak{g}$-module $V$,
 write $\Ass_\frak{g}(V)$ for the associated variety of $V$.
See \cite{kob98ii}, \cite{Vo91} for the definition.
We use the following fact on associated varieties.

\begin{fact}[\cite{kob98ii}]
\label{ass}
Let $\frak{g}$ be a complex reductive Lie algebra.
\begin{enumerate}
\item[{\rm (1)}]
$\Ass_{\frak{g}} (V)=\Ass_{\frak{g}} (V\otimes F)$ for
 any finitely generated $\frak{g}$-module $V$ and
 a nonzero finite-dimensional representation $F$ of $\frak{g}$.
\item[{\rm (2)}]
If $\lambda$ is in the weakly fair range and $A_\frak{q}(\lambda)$ is nonzero,
 then $\Ass_{\frak{g}}(A_\frak{q}(\lambda))=
 \Ad(K)(\Bar{\frak{u}}\cap \frak{p})$.
Here, we identify $\frak{g}$ with $\frak{g}^*$ by a non-degenerate 
 invariant bilinear form.
\end{enumerate}
\end{fact}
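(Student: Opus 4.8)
The plan is to handle the two assertions separately: (1) is a routine manipulation of good filtrations, while (2) breaks into an essentially formal upper bound and a genuinely harder matching lower bound.

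For (1), I would recall that $\Ass_\frak{g}(V)$ is the support in $\frak{g}^*$ of the associated graded module $\operatorname{gr}V$ for any good filtration $\{V_n\}$ of $V$ compatible with the standard filtration of $U(\frak{g})$, where $\operatorname{gr}V$ is finitely generated over $\operatorname{gr}U(\frak{g})=S(\frak{g})=\bb{C}[\frak{g}^*]$. Giving the finite-dimensional $F$ the trivial filtration and setting $(V\otimes F)_n:=V_n\otimes F$ produces a good filtration of $V\otimes F$ (finite-dimensionality of $F$ forces the action on the $F$-factor to be of order zero), and the associated graded is canonically $(\operatorname{gr}V)\otimes_\bb{C} F\simeq(\operatorname{gr}V)^{\oplus\dim F}$ as a graded $S(\frak{g})$-module. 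Since $F\neq 0$ this has the same support as $\operatorname{gr}V$, so $\Ass_\frak{g}(V\otimes F)=\Ass_\frak{g}(V)$.

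For the upper bound in (2), I would use the localization of Theorem~\ref{locZuc}: $A_\frak{q}(\lambda)\simeq\Gamma(X,i_+{\cal O}_Y\otimes_{{\cal O}_X}{\cal L}_{\lambda,X})$ with $X=G/\ovl{Q}$, $Y=K/(\ovl{Q}\cap K)$, and the ${\cal D}_X$-module $i_+{\cal O}_Y\otimes_{{\cal O}_X}{\cal L}_{\lambda,X}$ has characteristic variety equal to the conormal bundle $T^*_Y X$, independently of $\lambda$. The standard estimate for global sections on a (partial) flag variety then gives $\Ass_\frak{g}(A_\frak{q}(\lambda))\subseteq\mu(T^*_Y X)$ for the moment map $\mu$ into $\frak{g}^*$. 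Computing the conormal fibre at the base point: $T^*_{e\ovl{Q}}X$ is identified via the invariant form with $\Bar{\frak{u}}$, and the conormal direction to $Y$ there is exactly $\Bar{\frak{u}}\cap\frak{p}$, using $\frak{p}=(\frak{u}\cap\frak{p})\oplus(\frak{l}\cap\frak{p})\oplus(\Bar{\frak{u}}\cap\frak{p})$, the perfect pairing of $\frak{u}$ with $\Bar{\frak{u}}$, and nondegeneracy of the form on $\frak{k}$ and on $\frak{l}\cap\frak{p}$. Hence $\mu(T^*_Y X)=\ovl{\Ad(K)(\Bar{\frak{u}}\cap\frak{p})}$, which is irreducible since $K$ is connected and is closed by the known structure of $K$-saturations of $\theta$-stable nilradicals; this yields $\Ass_\frak{g}(A_\frak{q}(\lambda))\subseteq\Ad(K)(\Bar{\frak{u}}\cap\frak{p})$. (The same bound can alternatively be squeezed out of the $K$-type estimate of Fact~\ref{upblattner}.)

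For the lower bound, since the target is irreducible it suffices to show that the associated variety has the full dimension $\dim(\Bar{\frak{u}}\cap\frak{p})$, i.e.\ that the Gelfand--Kirillov dimension of $A_\frak{q}(\lambda)$ equals $\dim(\Bar{\frak{u}}\cap\frak{p})$. When $\lambda$ is in the good range, $A_\frak{q}(\lambda)$ is irreducible and nonzero and the functor $\Gamma(X,-)$ of Theorem~\ref{locZuc} is exact and faithful, so the inclusion above becomes an equality and the dimension is as claimed. To reach a general $\lambda$ in the weakly fair range with $A_\frak{q}(\lambda)\neq 0$, I would invoke the generalized Blattner formula, which makes the inequality of Fact~\ref{upblattner} an equality of $K$-modules; then the $K$-type asymptotics, hence the Gelfand--Kirillov dimension and the dimension of the associated variety, are independent of $\lambda$ on the nonvanishing locus and agree with the good-range value. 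The main obstacle is exactly this last step: the upper bound is formal, but its sharpness for \emph{all} weakly fair $\lambda$ cannot be read off Theorem~\ref{locZuc} directly (where $\Gamma$ need not be exact off the good range) and has to be extracted from an independent handle on the size of $A_\frak{q}(\lambda)$, such as the generalized Blattner formula or an explicit Gelfand--Kirillov dimension computation.
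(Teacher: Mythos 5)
This statement appears in the paper as a \emph{Fact}, with a citation to \cite{kob98ii}; the paper gives no proof of it, so there is no internal argument to compare your proposal against — the relevant question is just whether your argument is sound.

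Your proof of (1) is fine: taking the trivial filtration on $F$, a good filtration $\{V_n\}$ on $V$, and $(V\otimes F)_n := V_n\otimes F$ gives a good filtration with $\operatorname{gr}(V\otimes F)\simeq(\operatorname{gr}V)^{\oplus\dim F}$ as $S(\frak{g})$-modules, and the support is unchanged because $F\ne 0$.

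Your sketch of (2) has the right shape but leaves two real gaps. First, the upper bound rests on the inclusion $\Ass_\frak{g}(\Gamma(X,{\cal M}))\subseteq\mu(\operatorname{Ch}{\cal M})$ for a twisted ${\cal D}$-module ${\cal M}$ on a \emph{partial} flag variety; this is not in the present paper, and while it is true, it needs a genuine input (a Borho--Brylinski--type comparison of good filtrations together with properness of the moment map for $G/\ovl{Q}$), not just a citation to ``the standard estimate.'' The conormal computation and the identification $\mu(T^*_Y X)=\Ad(K)(\Bar{\frak{u}}\cap\frak{p})$, and its closedness, are correct. Second, and more seriously, the lower bound for general weakly fair $\lambda$ does not follow the way you say. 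The generalized Blattner formula in the weakly fair range is an alternating sum over the Weyl group of $K$, not the simple partition-function bound $m(\mu,p)$ of Fact~\ref{upblattner}; it does \emph{not} turn that inequality into an equality of $K$-modules, so you cannot read off the $K$-type asymptotics from Fact~\ref{upblattner} becoming sharp. Likewise, in the good range the step ``$\Gamma$ is exact and faithful, so the inclusion becomes an equality'' is a non sequitur: exactness and faithfulness give nonvanishing, not a lower bound on the dimension of the associated variety — that requires knowing that $\Gamma$ does not drop GK-dimension, which is again a Borho--Brylinski--type statement that must be supplied separately. You flag the extension to the weakly fair range as the obstacle, and that assessment is correct; as written the lower bound is not established, and the Blattner-formula route you suggest does not fill it.
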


Fact~\ref{ass}~(2) can be generalized in the following way.
\begin{prop}
\label{aqass}
Let $\frak{q}$ be
 a $\theta$-stable parabolic subalgebra of $\frak{g}$ and
 $\bb{C}_\lambda$ a one-dimensional
 $(\frak{l},L\cap K)$-module.
Suppose that $V$ is an irreducible $(\frak{g},K)$-submodule
 of $A_\frak{q}(\lambda)$.
Then $\Ass_{\frak{g}} (V)= \Ad(K)(\Bar{\frak{u}}\cap \frak{p})$.
\end{prop}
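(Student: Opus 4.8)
The plan is to realize $A_\frak{q}(\lambda)$ as the global sections of an \emph{irreducible} twisted ${\cal D}$-module via Theorem~\ref{locZuc}, and to read off both inclusions $\Ass_\frak{g}(V)\subset\mathcal{S}$ and $\mathcal{S}\subset\Ass_\frak{g}(V)$ from the geometry, where $\mathcal{S}:=\Ad(K)(\Bar{\frak{u}}\cap\frak{p})$. First I would record that $\mathcal{S}$ is closed (this is the content of Fact~\ref{ass}~(2)) and irreducible (being the image of the vector bundle $K\times^{\Bar{Q}\cap K}(\Bar{\frak{u}}\cap\frak{p})$), and reduce to $\lambda$ linear: if $\bb{C}_\lambda$ is not linear one passes to a finite covering $\wtl{G}_0\to G_0$ on which it lifts, which changes neither $\frak{g}$ nor the category of $\frak{g}$-modules nor $\Ass_\frak{g}$. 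With $\lambda$ linear, Theorem~\ref{locZuc} gives $A_\frak{q}(\lambda)\cong\Gamma(X,{\cal M})$ with $X=G/\Bar{Q}$, $i\colon Y=K/(\Bar{Q}\cap K)\hookrightarrow X$ the closed immersion, and ${\cal M}:=i_+{\cal O}_Y\otimes_{{\cal O}_X}{\cal L}_{\lambda,X}$. Since ${\cal L}_{\lambda,X}$ is invertible and $G$-equivariant, ${\cal M}$ is a module over the ring ${\cal D}_X^{{\cal L}_\lambda}:={\cal L}_{\lambda,X}\otimes_{{\cal O}_X}{\cal D}_X\otimes_{{\cal O}_X}{\cal L}_{\lambda,X}^{-1}$ of twisted differential operators, the $\frak{g}$-action of Theorem~\ref{locZuc} being the one through the canonical Lie algebra map $\frak{g}\to\Gamma(X,{\cal D}_X^{{\cal L}_\lambda})$; moreover ${\cal M}=i_+({\cal O}_Y\otimes i^*{\cal L}_{\lambda,X})$ is the push-forward along the closed immersion $i$ of an irreducible twisted ${\cal D}_Y$-module, hence is itself irreducible and holonomic with characteristic variety the conormal variety $T^*_YX$. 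Finally, under $\frak{g}^*\cong\frak{g}$ the conormal space to $Y$ at the base point is $(\Bar{\frak{q}}+\frak{k})^{\perp}=\Bar{\frak{u}}\cap\frak{p}$, so the moment map $\mu\colon T^*X\to\frak{g}^*$ carries $T^*_YX$ onto $\Ad(K)(\Bar{\frak{u}}\cap\frak{p})=\mathcal{S}$.

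For $\Ass_\frak{g}(V)\subset\mathcal{S}$ I would use the standard estimate: $V$ is a submodule of the finitely generated $U(\frak{g})$-module $\Gamma(X,{\cal M})$, so a good filtration on $\Gamma(X,{\cal M})$ induces a compatible good filtration on ${\cal M}$ and an $S(\frak{g})$-equivariant injection $\mathrm{gr}\,\Gamma(X,{\cal M})\hookrightarrow\Gamma(T^*X,\mathrm{gr}\,{\cal M})$, with $S(\frak{g})$ acting on the target through the comorphism $\mu^{\#}$ of the moment map; since $\mathrm{gr}\,{\cal M}$ is supported on $\mathrm{Ch}({\cal M})=T^*_YX$, this yields $\Ass_\frak{g}(V)\subset\Ass_\frak{g}(A_\frak{q}(\lambda))\subset\mu(T^*_YX)=\mathcal{S}$. (This inclusion could also be obtained without Theorem~\ref{locZuc}, by realizing $A_\frak{q}(\lambda)$ as a submodule of $A_\frak{q}(\mu)\otimes F$ for a finite-dimensional $F$ and $\mu$ in the good range and quoting Fact~\ref{ass}.)

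For the reverse inclusion $\mathcal{S}\subset\Ass_\frak{g}(V)$ — the substantial point — I would argue as follows. Since $V\neq0$, the sub-${\cal D}_X^{{\cal L}_\lambda}$-module of ${\cal M}$ generated by the global sections lying in $V$ is nonzero, hence equals ${\cal M}$ by irreducibility. Fix a finite-dimensional $V_0\subset V$ with $U(\frak{g})V_0=V$, give $V$ the good filtration $F_jV:=U_j(\frak{g})V_0$ and ${\cal M}$ the induced good filtration $F_p{\cal M}:=({\cal D}_X^{{\cal L}_\lambda})_{\le p}\cdot({\cal O}_XV_0)$, so that $\mathrm{gr}\,{\cal M}$ is generated over $\mathrm{gr}\,{\cal D}_X^{{\cal L}_\lambda}={\cal O}_{T^*X}$ by $V_0=\mathrm{gr}_0V\subset\mathrm{gr}_0{\cal M}$, while $\mathrm{gr}\,V$ is generated over $S(\frak{g})$ by $V_0$. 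Now if $f\in S(\frak{g})$ vanishes on $\Ass_\frak{g}(V)=\mathrm{Supp}(\mathrm{gr}\,V)$, some power $f^N$ annihilates $V_0$ inside $\mathrm{gr}\,{\cal M}$, hence $\mu^{\#}(f)^N$ annihilates $\mathrm{gr}\,{\cal M}={\cal O}_{T^*X}\cdot V_0$; therefore $T^*_YX=\mathrm{Supp}(\mathrm{gr}\,{\cal M})\subset\mu^{-1}(\{f=0\})$, and so $\mathcal{S}=\mu(T^*_YX)\subset\{f=0\}$. Thus every element of $S(\frak{g})$ vanishing on $\Ass_\frak{g}(V)$ vanishes on $\mathcal{S}$, i.e.\ $\mathcal{S}\subset\Ass_\frak{g}(V)$. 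Combining the two inclusions gives $\Ass_\frak{g}(V)=\mathcal{S}=\Ad(K)(\Bar{\frak{u}}\cap\frak{p})$.

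The hard part is the first paragraph: I must recognize that the object $i_+{\cal O}_Y\otimes_{{\cal O}_X}{\cal V}_X$ of Theorem~\ref{locZuc}, with its ``tensor-product'' $\frak{g}$-action, is an irreducible module over the twisted ring ${\cal D}_X^{{\cal L}_\lambda}$ with conormal characteristic variety, and that $\mu(T^*_YX)=\mathcal{S}$; these are exactly what make both inclusions work. The remaining steps — the bookkeeping with compatible good filtrations on $\Gamma(X,{\cal M})$ and on ${\cal M}$, and the reduction to linear $\lambda$ — are routine.
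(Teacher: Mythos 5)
Your route is genuinely different from the paper's: you realize $A_\frak{q}(\lambda)$ via Theorem~\ref{locZuc} as $\Gamma(X,{\cal M})$ for an irreducible $K$-equivariant twisted ${\cal D}_X$-module ${\cal M}$ with $\operatorname{Ch}({\cal M})=T^*_YX$, and read both inclusions from the moment-map identity $\mu(T^*_YX)=\Ad(K)(\bar{\frak u}\cap\frak p)$ together with good-filtration bookkeeping; irreducibility of ${\cal M}$ is what powers the lower bound $\Ad(K)(\bar{\frak u}\cap\frak p)\subset\Ass_\frak{g}(V)$. The paper never touches ${\cal D}$-modules in this proof: it embeds $\bb{C}_\lambda\hookrightarrow F\otimes\bb{C}_{\lambda+2N\rho(\frak u)}$ with $\lambda+2N\rho(\frak u)$ in the good range, shows $(P^{\frak g,K}_{\bar{\frak q},L\cap K})_{s+1}$ of the quotient vanishes, obtains $V\hookrightarrow F\otimes A_\frak{q}(\lambda+2N\rho(\frak u))$ and a surjection $V\otimes F^*\to A_\frak{q}(\lambda+2N\rho(\frak u))$, and concludes with Fact~\ref{ass}.

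However, your reduction to linear $\lambda$ by passing to a finite cover is not valid, and this is a real gap: Proposition~\ref{aqass} is stated for an arbitrary one-dimensional $(\frak l,L\cap K)$-module $\bb{C}_\lambda$. When $\frak l_0$ has a split central factor, $\lambda$ may restrict to $x\mapsto x^s$ with $s\notin\bb{Z}$ on an $\bb{R}_{>0}$-factor; such a character never extends algebraically to $L$, and no finite cover of $G_0$ cures this (nor does tensoring with finite-dimensional representations, which shifts $\lambda$ only by linear characters). Theorem~\ref{locZuc} requires $V$ to be a $\ovl Q$-module, so your construction is simply unavailable in this case; making your method work for all $\lambda$ would require a monodromic/TDO version of Theorem~\ref{locZuc} for non-algebraic twists, which is not a routine step. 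For linear $\lambda$ the remainder of your argument is essentially sound, though the identification of the $\frak g$-action in Theorem~\ref{locZuc} with the TDO action through $\frak g\to\Gamma(X,{\cal D}_X^{{\cal L}_\lambda})$ is used implicitly and would merit a line of justification.
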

\begin{proof}
If we take sufficiently large integer $N\in\bb{N}$,
 then $\lambda+2N\rho(\frak{u})$ is in the good range.
In view of Fact~\ref{ass}~(2), it is enough to show that
 $\Ass_\frak{g}(V)=\Ass_\frak{g}(A_\frak{q}(\lambda+2N\rho(\frak{u})))$.
Let $F$ be the irreducible finite-dimensional
 $(\frak{g},K)$-module with lowest weight $-2N\rho(\frak{u})$.
Then there is an injective $(\Bar{\frak{q}},L\cap K)$-homomorphism
 $\bb{C}_\lambda\to F\otimes \bb{C}_{\lambda+2N\rho(\frak{u})}$, which gives
 a long exact sequence:
\begin{align*}
&\qquad\qquad\cdots\to
(P_{\Bar{\frak{q}},L\cap K}^{\frak{g},K})_{s+1} 
((F\otimes \bb{C}_{\lambda+2N\rho(\frak{u})})/\bb{C}_\lambda)\to\\
\to
(P_{\Bar{\frak{q}},L\cap K}^{\frak{g},K})_s (\bb{C}_\lambda)
&\to 
(P_{\Bar{\frak{q}},L\cap K}^{\frak{g},K})_s 
 (F\otimes \bb{C}_{\lambda+2N\rho(\frak{u})})
\to\cdots.
\end{align*}
We claim that 
$(P_{\Bar{\frak{q}},L\cap K}^{\frak{g},K})_{s+1} 
((F\otimes \bb{C}_{\lambda+2N\rho(\frak{u})})/\bb{C}_\lambda)=0$.
Indeed, $(F\otimes \bb{C}_{\lambda+2N\rho(\frak{u})})/\bb{C}_\lambda$
 admits a finite filtration $\{F_p\}$ of
 $(\Bar{\frak{q}},L\cap K)$-modules such that
 $\Bar{\frak{u}}$ acts by zero on $F_p/F_{p-1}$.
Then \cite[Theorem 5.35]{KnVo} shows that 
 $(P_{\Bar{\frak{q}},L\cap K}^{\frak{g},K})_{s+1}(F_p/F_{p-1})=0$.
By using the exact sequences 
\[
(P_{\Bar{\frak{q}},L\cap K}^{\frak{g},K})_{s+1}(F_{p-1})
\to 
(P_{\Bar{\frak{q}},L\cap K}^{\frak{g},K})_{s+1}(F_{p})
\to
(P_{\Bar{\frak{q}},L\cap K}^{\frak{g},K})_{s+1}(F_p/F_{p-1})
\]
 iteratively, we can see that
 $(P_{\Bar{\frak{q}},L\cap K}^{\frak{g},K})_{s+1} 
((F\otimes \bb{C}_{\lambda+2N\rho(\frak{u})})/\bb{C}_\lambda)=0$.

As a result, we get an injective map
\begin{align*}
V\subset A_\frak{q}(\lambda)\to 
(P_{\Bar{\frak{q}},L\cap K}^{\frak{g},K})_s 
 (F\otimes \bb{C}_{\lambda+2N\rho(\frak{u})})
\simeq F\otimes A_\frak{q}(\lambda+2N\rho(\frak{u})),
\end{align*}
where the last isomorphism is the Mackey isomorphism
 \cite[Theorem 2.103]{KnVo}.
Then Fact~\ref{ass}~(1) shows that
\[
{\Ass}_{\frak{g}}(V)\subset 
{\Ass}_{\frak{g}}\bigl(F\otimes
 A_\frak{q}(\lambda+2N\rho(\frak{u}))\bigr)=
{\Ass}_{\frak{g}}\bigl(A_\frak{q}(\lambda+2N\rho(\frak{u}))\bigr).
\]
For the opposite inclusion,
 we see that
\[
{\Hom}_{\frak{g},K}\bigl(V\otimes F^*, 
 A_\frak{q}(\lambda+2N\rho(\frak{u}))\bigr)
\simeq
{\Hom}_{\frak{g},K}\bigl(V, 
 F\otimes A_\frak{q}(\lambda+2N\rho(\frak{u}))\bigr)\neq 0.
\]
Since $A_\frak{q}(\lambda+2N\rho(\frak{u}))$ is irreducible,
 there exists a surjective map
 $V\otimes F^* \to A_\frak{q}(\lambda+2N\rho(\frak{u}))$.
Therefore, 
Fact~\ref{ass}~(1) shows that
\[
{\Ass}_{\frak{g}}(V)= 
{\Ass}_{\frak{g}}(V\otimes F^*)\supset
{\Ass}_{\frak{g}}\bigl(A_\frak{q}(\lambda+2N\rho(\frak{u}))\bigr).
\]
Consequently,
\[
{\Ass}_{\frak{g}}(V)= 
{\Ass}_{\frak{g}}\bigl(A_\frak{q}(\lambda+2N\rho(\frak{u}))\bigr)
=\Ad (K)(\Bar{\frak{u}}\cap \frak{p}).
\]
\end{proof}

\begin{rem}
{\rm
In some literature, $A_\frak{q}(\lambda)$ is defined by
 using the derived functor of $I_{\frak{q}, L\cap K}^{\frak{g},K}$.
If we adopt this definition, 
 we have to replace `irreducible $(\frak{g},K)$-submodule' in
 Proposition~\ref{aqass} by `irreducible quotient $(\frak{g},K)$-module'.
Both definitions agree if $\lambda$ is unitary and in the weakly fair range.
}
\end{rem}

A connection between branching laws of $\frak{g}$-modules
 and their associated varieties was studied in \cite{kob98ii}.
\begin{fact}[{\cite[Theorem 3.1]{kob98ii}}]
Let $\frak{h}$ be a reductive Lie subalgebra of $\frak{g}$.
Write ${\pr}_{\frak{g}\to\frak{h}}:\frak{g}^*\to \frak{h}^*$ for 
 the restriction map.
Suppose that $W$ is an irreducible $\frak{g}$-module
 and $V$ is an irreducible $\frak{h}$-module such that
 ${\Hom}_{\frak{h}}(V,W)\neq 0$.
Then 
\[
{\pr}_{\frak{g}\to\frak{h}}({\Ass}_{\frak{g}}(W))\subset
{\Ass}_{\frak{h}}(V).
\]
\end{fact}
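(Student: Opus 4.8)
The plan is to produce a good filtration of $W$ by transporting a good filtration of $V$ along a single cyclic vector, so that the two associated graded modules become directly comparable over $S(\frak{h})$ after restriction of scalars along the inclusion $S(\frak{h})\hookrightarrow S(\frak{g})$ dual to $\pr_{\frak{g}\to\frak{h}}$.

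First I would fix a nonzero $\frak{h}$-homomorphism $V\to W$; since $V$ is irreducible it is injective, so $V$ may be regarded as a nonzero $\frak{h}$-submodule of $W$, and since $W$ is irreducible one has $W=U(\frak{g})V$. Choose $0\neq v_0\in V$. Then $v_0$ generates $V$ over $U(\frak{h})$ and $W$ over $U(\frak{g})$, and $V_n:=U_n(\frak{h})v_0$, $W_n:=U_n(\frak{g})v_0$ are good filtrations of $V$ and $W$ respectively (each term is finite-dimensional since $\dim\frak{g}<\infty$). Because $U_n(\frak{h})\subset U_n(\frak{g})$ we have $V_n\subset W_n$, so the inclusion $V\hookrightarrow W$ is a filtered map and induces an $S(\frak{h})$-linear map $\mathrm{gr}\,V\to\mathrm{gr}\,W$, where $\mathrm{gr}\,W$ is viewed as an $S(\frak{h})$-module by restricting its $S(\frak{g})$-module structure.

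The key observation is that both filtrations have the same degree-zero piece $\bb{C}v_0$, hence $\mathrm{gr}\,W$ is generated over $S(\frak{g})$ by $\mathrm{gr}_0W=\bb{C}v_0$, which lies in the image $Q$ of $\mathrm{gr}\,V\to\mathrm{gr}\,W$. Therefore $\mathrm{gr}\,W=S(\frak{g})\cdot Q$, i.e.\ the multiplication map $S(\frak{g})\otimes_{S(\frak{h})}Q\to\mathrm{gr}\,W$ is surjective, and $Q$ is a finitely generated $S(\frak{h})$-module, being a quotient of $\mathrm{gr}\,V$. Taking supports (using independence of $\Ass$ from the choice of good filtration, and that the support of a base change along the flat map $S(\frak{h})\hookrightarrow S(\frak{g})$ is the preimage of the support), one gets
\begin{align*}
{\Ass}_{\frak{g}}(W)=\operatorname{Supp}_{S(\frak{g})}(\mathrm{gr}\,W)
&\subset\operatorname{Supp}_{S(\frak{g})}\bigl(S(\frak{g})\otimes_{S(\frak{h})}Q\bigr)\\
&={\pr}_{\frak{g}\to\frak{h}}^{-1}\bigl(\operatorname{Supp}_{S(\frak{h})}Q\bigr)
\subset{\pr}_{\frak{g}\to\frak{h}}^{-1}\bigl({\Ass}_{\frak{h}}(V)\bigr),
\end{align*}
the last inclusion because $Q$ is a quotient of $\mathrm{gr}\,V$, so $\operatorname{Supp}_{S(\frak{h})}Q\subset\operatorname{Supp}_{S(\frak{h})}(\mathrm{gr}\,V)={\Ass}_{\frak{h}}(V)$. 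Applying $\pr_{\frak{g}\to\frak{h}}$ to both sides yields $\pr_{\frak{g}\to\frak{h}}({\Ass}_{\frak{g}}(W))\subset{\Ass}_{\frak{h}}(V)$.

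The main obstacle, such as it is, is the filtration bookkeeping: one must use the \emph{same} cyclic vector $v_0$ for both $V$ and $W$, for otherwise there is no reason that $\mathrm{gr}\,W$ be generated over $S(\frak{g})$ by the image of $\mathrm{gr}\,V$; and one must be content to work with that image $Q$ together with the surjection $S(\frak{g})\otimes_{S(\frak{h})}Q\twoheadrightarrow\mathrm{gr}\,W$, since the induced map $\mathrm{gr}\,V\to\mathrm{gr}\,W$ is in general far from injective. Once this is set up, the remaining steps — that the support of a quotient module is contained in that of the module, and that support commutes with flat base change — are routine commutative algebra.
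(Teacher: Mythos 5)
The paper does not prove this statement; it is quoted as a Fact with a citation to~\cite[Theorem~3.1]{kob98ii}, so there is no in-paper proof to compare against. Your argument is correct, self-contained, and is the natural filtration-transport argument that one finds in the literature on this result. The one detail worth flagging but that you handle correctly is the choice of a \emph{single} cyclic vector $v_0$ anchoring both filtrations $V_n=U_n(\frak{h})v_0$ and $W_n=U_n(\frak{g})v_0$: this guarantees $V_n\subset W_n$ and that the degree-zero piece of $\mathrm{gr}\,W$ already lies in the image $Q$ of $\mathrm{gr}\,V$, which is exactly what makes $S(\frak{g})\cdot Q=\mathrm{gr}\,W$ and hence the surjection $S(\frak{g})\otimes_{S(\frak{h})}Q\twoheadrightarrow\mathrm{gr}\,W$. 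The commutative-algebra steps at the end (support of a quotient, support under base change along $S(\frak{h})\hookrightarrow S(\frak{g})$, identification of the induced map on spectra with $\pr_{\frak{g}\to\frak{h}}$, and independence of the associated variety from the choice of good filtration) are all standard and correctly invoked. One very minor remark: the equality $\operatorname{Supp}_{S(\frak{g})}(S(\frak{g})\otimes_{S(\frak{h})}Q)=\pr_{\frak{g}\to\frak{h}}^{-1}(\operatorname{Supp}_{S(\frak{h})}Q)$ actually holds for any finitely generated $Q$ and any ring map (it follows from Nakayama applied at each fibre), so the appeal to flatness, while harmless, is not strictly needed; what is needed is that $Q$ is finitely generated over $S(\frak{h})$, which you supply by observing it is a quotient of $\mathrm{gr}\,V$.
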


In our setting, we can deduce from Theorem~\ref{branchup2}
 that the equality holds.

\begin{thm}
\label{branchass}
Let the notation and the assumption be as in Theorem~\ref{branchup1}.
Suppose that $V$ is an irreducible $(\frak{g}',K')$-module
 such that ${\Hom}_{\frak{g}'}(V, A_\frak{q}(\lambda))\neq 0$.
Then
\[
{\pr}_{\frak{g}\to\frak{g}'}({\Ass}_{\frak{g}}(A_\frak{q}(\lambda)))=
{\Ass}_{\frak{g}'}(V).
\]
\end{thm}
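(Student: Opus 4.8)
The plan is to prove the two inclusions separately: "$\subset$" is essentially the cited inclusion \cite[Theorem~3.1]{kob98ii} combined with Proposition~\ref{aqass}, while "$\supset$" rests on the upper-bound embedding of Theorem~\ref{branchup2}, again followed by Proposition~\ref{aqass}. First I would fix the identifications $\frak{g}\simeq\frak{g}^*$ and $\frak{g}'\simeq(\frak{g}')^*$ by means of the invariant form chosen in the proof of Theorem~\ref{parab}: this form restricts nondegenerately to $\frak{g}'$ and makes $\pr_{\frak{g}\to\frak{g}'}$ the orthogonal projection onto $\frak{g}'$ along $\frak{g}^{-\sigma}$, which is $\Ad(K')$-equivariant since $K'$ commutes with $\sigma$. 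Recall that $\Ass_{\frak{g}}(A_\frak{q}(\lambda))=\Ad(K)(\ovl{\frak{u}}\cap\frak{p})$ by Fact~\ref{ass}~(2).

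For "$\subset$": since $\lambda$ is unitary and in the weakly fair range, $A_\frak{q}(\lambda)$ is unitarizable by Fact~\ref{aqlam}~(iv) and of finite length by Fact~\ref{aqlam}~(i), hence a finite direct sum $\bigoplus_i W_i$ of irreducible $(\frak{g},K)$-modules. From $\Hom_{\frak{g}'}(V,A_\frak{q}(\lambda))\neq0$ I would deduce, by composing with a projection onto a summand, that $\Hom_{\frak{g}'}(V,W_i)\neq0$ for some $i$, so \cite[Theorem~3.1]{kob98ii} gives $\pr_{\frak{g}\to\frak{g}'}(\Ass_{\frak{g}}(W_i))\subset\Ass_{\frak{g}'}(V)$. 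Applying Proposition~\ref{aqass} to the submodule $W_i\subset A_\frak{q}(\lambda)$ yields $\Ass_{\frak{g}}(W_i)=\Ad(K)(\ovl{\frak{u}}\cap\frak{p})=\Ass_{\frak{g}}(A_\frak{q}(\lambda))$, and the inclusion follows.

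For "$\supset$": I would compose $V\hookrightarrow A_\frak{q}(\lambda)$ with the injection of Theorem~\ref{branchup2} into $\bigoplus_{p,\lambda'}A_{\frak{q}''}(\lambda')^{\oplus m(\lambda',p)}$ and project to a summand on which the composite is nonzero; since $V$ is irreducible this produces an embedding $V\hookrightarrow A_{\frak{q}''}(\lambda')$ for some $\lambda'\in\Lambda$, so Proposition~\ref{aqass} over the pair $(\frak{g}',K')$ gives $\Ass_{\frak{g}'}(V)=\Ad(K')(\ovl{\frak{u}''}\cap\frak{p}')$, where $\frak{u}''$ is the nilradical of $\frak{q}''$. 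Two Lie-algebra computations then finish the argument. First, since $\frak{q}''=\frak{b}(\frak{l}'_c)\oplus\frak{l}'_n\oplus\frak{u}'$ with $\frak{l}'_c\subset\frak{k}'$, the decomposition $\ovl{\frak{u}''}=\ovl{\frak{n}(\frak{l}'_c)}\oplus\ovl{\frak{u}'}$ is $\theta$-stable and $\ovl{\frak{n}(\frak{l}'_c)}\subset\frak{k}'$, so $\ovl{\frak{u}''}\cap\frak{p}'=\ovl{\frak{u}'}\cap\frak{p}'$. Second, applying the computation in the proof of Theorem~\ref{parab} to the $\sigma$-open $\theta$-stable parabolic $\ovl{\frak{q}}$, whose associated parabolic of $\frak{g}'$ is $\ovl{\frak{q}'}$, gives $\pr_{\frak{g}\to\frak{g}'}(\ovl{\frak{u}}\cap\frak{p})=\ovl{\frak{u}'}\cap\frak{p}'$. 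Using the $\Ad(K')$-equivariance of $\pr_{\frak{g}\to\frak{g}'}$,
\begin{align*}
\Ass_{\frak{g}'}(V)&=\Ad(K')(\ovl{\frak{u}'}\cap\frak{p}')
=\pr_{\frak{g}\to\frak{g}'}(\Ad(K')(\ovl{\frak{u}}\cap\frak{p}))\\
&\subset\pr_{\frak{g}\to\frak{g}'}(\Ad(K)(\ovl{\frak{u}}\cap\frak{p}))
=\pr_{\frak{g}\to\frak{g}'}(\Ass_{\frak{g}}(A_\frak{q}(\lambda))),
\end{align*}
and, combined with the inclusion from the previous paragraph, this gives the asserted equality.

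The step I expect to be the main obstacle is the passage to $\ovl{\frak{q}}$ used in the last computation: one must check that complex conjugation with respect to $\frak{g}_0$ commutes with both $\theta$ and $\sigma$, stabilizes $\frak{k}'$ and $\frak{p}'$, and carries $N_{\frak{k}'}(\frak{q}\cap\frak{p}')$ onto $N_{\frak{k}'}(\ovl{\frak{q}}\cap\frak{p}')$, so that $\ovl{\frak{q}}$ is again $\sigma$-open and $\theta$-stable and the parabolic of $\frak{g}'$ it produces via Theorem~\ref{parab}~(iii) is exactly $\ovl{\frak{q}'}$, with nilradical $\ovl{\frak{u}'}$. This is routine but it is what legitimizes the identity $\pr_{\frak{g}\to\frak{g}'}(\ovl{\frak{u}}\cap\frak{p})=\ovl{\frak{u}'}\cap\frak{p}'$. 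A minor additional point is to pin down the identifications $\frak{g}\simeq\frak{g}^*$ and $\frak{g}'\simeq(\frak{g}')^*$ consistently, so that $\pr_{\frak{g}\to\frak{g}'}$ really coincides with the dual restriction map appearing in the statement.
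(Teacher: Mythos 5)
Your proof is correct, but it takes a genuinely different route from the paper. You prove the two inclusions separately: for $\pr_{\frak{g}\to\frak{g}'}(\Ass_{\frak{g}}(A_\frak{q}(\lambda)))\subset\Ass_{\frak{g}'}(V)$ you invoke the cited fact \cite[Theorem~3.1]{kob98ii} applied to an irreducible summand $W_i$ (using Proposition~\ref{aqass} to identify $\Ass_\frak{g}(W_i)$ with $\Ass_\frak{g}(A_\frak{q}(\lambda))$), while for the reverse inclusion you compose the $\Ad(K')$-equivariance of $\pr_{\frak{g}\to\frak{g}'}$ with the identities $\ovl{\frak{u}''}\cap\frak{p}'=\ovl{\frak{u}'}\cap\frak{p}'=\pr_{\frak{g}\to\frak{g}'}(\ovl{\frak{u}}\cap\frak{p})$ and the trivial inclusion $K'\subset K$. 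The paper instead reduces the statement to the set-theoretic equality $\pr_{\frak{g}\to\frak{g}'}(\Ad(K)(\frak{u}\cap\frak{p}))=\Ad(K')(\frak{u}''\cap\frak{p}')$ and establishes it in one stroke via a density argument: since $\frak{q}$ is $\sigma$-open, $K'/(Q\cap K')$ is open dense in $K/(Q\cap K)$, hence $\Ad(K')(\frak{u}\cap\frak{p})$ is dense in $\Ad(K)(\frak{u}\cap\frak{p})$, and the image $\Ad(K')(\frak{u}''\cap\frak{p}')$ is already closed. Thus the paper never actually uses the Kobayashi inclusion it quotes immediately before the theorem; its density argument replaces your ``$\subset$'' step and is marginally more self-contained, though your version is equally valid and arguably more transparent since it makes the role of the known one-sided bound explicit. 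A further point in your favor: you correctly flag the need to apply the Lie-algebra computation from Theorem~\ref{parab} to $\ovl{\frak{q}}$ rather than $\frak{q}$, and to verify that complex conjugation (commuting with $\theta$ and $\sigma$, and stabilizing $\frak{k}'$ and $\frak{p}'$) carries $\frak{q},\frak{q}',\frak{q}''$ to $\ovl{\frak{q}},\ovl{\frak{q}'},\ovl{\frak{q}''}$ respectively; the paper silently drops the bars when passing from $\Ass_\frak{g}(A_\frak{q}(\lambda))=\Ad(K)(\ovl{\frak{u}}\cap\frak{p})$ to the assertion about $\Ad(K)(\frak{u}\cap\frak{p})$, and your comment correctly identifies what legitimizes that step.
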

\begin{proof}
In light of Theorem~\ref{branchup2},
 we see that $V$ is isomorphic to
 an irreducible $(\frak{g}',K')$-submodule of
 $A_{\frak{q}''}(\lambda')$ for some character $\lambda'$.
Then Proposition~\ref{aqass} and Fact~\ref{ass}~(2) show that
\[{\Ass}_{\frak{g}'}(V)=\Ad (K')(\ovl{\frak{u}''}\cap \frak{p}'),\quad
{\Ass}_{\frak{g}}(A_\frak{q}(\lambda))=\Ad (K)(\ovl{\frak{u}}\cap \frak{p}).
\]
Therefore, it is enough to prove that
\[
{\pr}_{\frak{g}\to\frak{g}'}(\Ad (K)(\frak{u}\cap \frak{p}))
=\Ad (K')(\frak{u}''\cap\frak{p}').
\]
Since $\frak{q}$ is $\sigma$-open,
 $K'/(Q\cap K')$ is open dense in the partial flag variety
 $K/(Q\cap K)$. 
As a result, $\Ad (K')(\frak{u}\cap \frak{p})$ is dense in
 $\Ad (K)(\frak{u}\cap \frak{p})$ and hence
 ${\pr}_{\frak{g}\to\frak{g}'}(\Ad (K')(\frak{u}\cap \frak{p}))$
 is dense in ${\pr}_{\frak{g}\to\frak{g}'}(\Ad (K)(\frak{u}\cap \frak{p}))$.
From the proof of Proposition~\ref{parab}, we have
\[
{\pr}_{\frak{g}\to\frak{g}'}(\frak{u}\cap \frak{p})
=\frak{u}'\cap\frak{p}'=\frak{u}''\cap \frak{p}'.
\]
Consequently, $\Ad(K')(\frak{u}''\cap\frak{p}')$ is a dense
 subset of ${\pr}_{\frak{g}\to \frak{g}'}(\Ad (K)(\frak{u}\cap \frak{p}))$.
Since $\Ad(K')(\frak{u}''\cap\frak{p}')$ is closed,
 we conclude that
\[
{\pr}_{\frak{g}\to\frak{g}'}(\Ad (K)(\frak{u}\cap \frak{p}))
=\Ad (K')(\frak{u}''\cap\frak{p}'),
\]
which completes the proof.
\end{proof}

\end{document}